\title{The Expressive Power of $k$-ary Exclusion Logic}
\author{Raine Rönnholm \\ \small Tampere University
}
\date{}
\theoremstyle{plain}
\newtheorem{theorem}{Theorem}[section] 
\newtheorem{lemma}[theorem]{Lemma}
\newtheorem{corollary}[theorem]{Corollary}
\newtheorem{proposition}[theorem]{Proposition}
\newtheorem{claim}{Claim}
\theoremstyle{definition}
\newtheorem{definition}{Definition}[section] 
\newtheorem{example}{Example}[section] 
\newtheorem*{remark}{Remark}
\newcommand{\tuple}[1]{\vec{#1}} 
\DeclareMathOperator{\true}{\vDash}	
\DeclareMathOperator{\ntrue}{\nvDash} 
\DeclareMathOperator{\dom}{dom} 
\DeclareMathOperator{\vr}{Vr} 
\DeclareMathOperator{\fr}{Fr} 
\DeclareMathOperator{\subf}{Sf}  
\DeclareMathOperator{\Tset}{T_\text{$L$}} 
\DeclareMathOperator{\FOset}{FO_\text{$L$}} 
\DeclareMathOperator{\INCset}{INC_\text{$L$}} 
\DeclareMathOperator{\EXCset}{EXC_\text{$L$}} 
\DeclareMathOperator{\INEXset}{INEX_\text{$L$}} 
\DeclareMathOperator{\ESOset}{ESO_\text{$L$}} 
\DeclareMathOperator{\FO}{FO}
\DeclareMathOperator{\ESO}{ESO}
\DeclareMathOperator{\INC}{INC}
\DeclareMathOperator{\EXC}{EXC}
\DeclareMathOperator{\INEX}{INEX}
\DeclareMathOperator{\inc}{\text{$\subseteq$}} 
\DeclareMathOperator{\exc}{\text{$\mid$}} 
\DeclareMathOperator{\ince}{\text{$\subseteq^{e}$}} 
\DeclareMathOperator{\dep}{\text{$=$}} 
\DeclareMathOperator{\exclusion}{exc}
\DeclareMathOperator{\Ee}{\exists} 
\DeclareMathOperator{\Eu}{\exists^{\bf U}} 
\DeclareMathOperator{\Ae}{\forall} 
\DeclareMathOperator{\uniform}{{\bf U}} 
\newcommand*{\abs}[1]{\left\lvert#1\right\rvert}   
\begin{document}
\maketitle


\begin{abstract}
In this paper we study the expressive power of $k$-ary exclusion logic, EXC[$k$], that is obtained by extending first order logic with $k$-ary exclusion atoms. It is known that without arity bounds exclusion logic is equivalent with dependence logic.  By observing the translations, we see that the expressive power of EXC[$k$] lies in between $k$-ary and ($k\!+\!1$)-ary dependence logics. We will show that, at least in the case when $k=1$, both of these inclusions are proper.

In a recent work by the author it was shown that $k$-ary inclusion-exclusion logic is equivalent with $k$-ary existential second order logic, ESO[$k$]. We will show that, on the level of sentences, it is possible to simulate inclusion atoms with exclusion atoms, and in this way express ESO[$k$]-sentences by using only $k$-ary exclusion atoms. For this translation we also need to introduce a novel method for ``unifying'' the values of certain variables in a team. As a consequence, EXC[$k$] captures ESO[$k$] on the level of sentences, and we obtain a strict arity hierarchy for exclusion logic. It also follows that $k$-ary inclusion logic is strictly weaker than EXC[$k$].
Finally we use similar techniques to formulate a translation from ESO[$k$] to $k$-ary inclusion logic with an alternative strict semantics. Consequently, for any arity fragment of inclusion logic, strict semantics is strictly more expressive than lax semantics.

\medskip
Keywords:
exclusion logic, inclusion logic, dependence logic, team semantics, existential second order logic, expressive power.
\end{abstract}


\section{Introduction}

\emph{Exclusion logic} is an extension of first order logic with \emph{team semantics}. In team semantics the truth of formulas is interpreted by using sets of assignments which are called \emph{teams}. This approach was introduced by Hodges \cite{Hodges97} to define compositional semantics for the \emph{IF-logic} by Hintikka and Sandu \cite{Hintikka89}. The truth for the IF-logic was originally defined by using semantic games of imperfect information (\cite{Hintikka97}), and in thus teams can be seen as sets of parallel positions in a semantic game. Teams can also be interpreted as databases (\cite{Vaananen07}), and thus the study of logics with team semantics has natural connections with the study of database dependencies.

For first order logic team semantics is just a generalization of Tarski semantics and has the same expressive power. But if we extend first order logic with new atomic formulas, we obtain higher expressive power and we can define more complex properties of teams. The first new atoms for this framework were \emph{dependence atoms} introduced by Väänänen \cite{Vaananen07}. In \emph{dependence logic} the semantics for these atoms are defined by functional dependencies of the values of variables in a team. Several new atoms have been presented for this framework with the motivation from simple database dependencies -- such as \emph{independence atoms} by Grädel and Väänänen \cite{Gradel12} and \emph{inclusion and exclusion atoms} by Galliani \cite{Galliani12b}. Lately there has been research on these atoms with an attempt to formalize the dependency phenomena in different fields of science, such as database theory (\cite{Kontinen13b}), belief presentation~(\cite{Galliani12a}) and quantum mechanics (\cite{Paolini15}).

If we extend first order logic with inclusion/exclusion atoms we obtain \emph{inclusion and exclusion logics}. The team semantics for these atoms is very simple: Suppose that $\tuple t_1,\tuple t_2$ are $k$-tuples of terms and $X$ is a team. The $k$-ary inclusion atom $\tuple t_1\!\subseteq\!\tuple t_2$ says that the values of $\tuple t_1$ are included in the values of $\tuple t_2$ in the team $X$. The $k$-ary exclusion atom $\tuple t_1\,|\,\tuple t_2$ dually says that $\tuple t_1$ and $\tuple t_2$ get distinct values in $X$, i.e. for all assignments $s,s'\in X$ we have $s(\tuple t_1)\neq s'(\tuple t_2)$. 


Galliani \cite{Galliani12b} has shown that without arity bounds exclusion logic is equivalent with dependence logic. Thus, on the level of sentences, it captures \emph{existential second order logic}, ESO (\cite{Vaananen07}). Inclusion logic is not comparable with dependence logic in general (\cite{Galliani12b}), but captures \emph{positive greatest fixed point logic} on the level of sentences, as shown by Galliani and Hella~\cite{Hella13}. Hence exclusion logic captures NP and inclusion logic captures PTIME over finite structures with linear order.  

In order to understand the nature of these atoms, there has been research on the bounded arity fragments of the corresponding logics. Durand and Kontinen \cite{Durand12} have shown that, on the level of sentences, $k$-ary dependence logic captures the fragment of  ESO in which at most ($k$$-$$1$)-ary functions can be quantified.\footnote{See \cite{Kontinen13a}, \cite{Hannula14} and \cite{Hannula15} for similar hierarchy results on independence and inclusion logics.}  From this it follows that dependence logic has a strict arity hierarchy over sentences since the arity hierarchy of ESO (over arbitrary vocabulary) is known to be strict, as shown by Ajtai \cite{Ajtai83}. However, these earlier results do not tell much about the expressive power of $k$-ary exclusion logic, EXC[$k$], as the existing translation from it to dependence logic does not respect the arities of atoms. 

There has not been much research on exclusion logic after Galliani proved its equivalence to dependence logic. In this paper we will show that the relationship between these two  logics becomes nontrivial when we consider their bounded arity fragments. This also leads to results on the relation between inclusion and exclusion logics, which is interesting because they can be seen as duals to each other, as we have argued in \cite{Ronnholm15}.

By inspecting Galliani's translations (\cite{Galliani12b}) between exclusion and dependence logics more closely, we observe that EXC[$k$] is stronger than $k$-ary dependence logic but weaker than ($k$$+$$1$)-ary dependence logic. Thus it is natural to ask whether the expressive power of EXC[$k$] is strictly in between $k$-ary and ($k$$+$$1$)-ary dependence logics. We will show that this holds at least when $k=1$.

In an earlier work by the author \cite{Ronnholm15} it was shown that both INC[$k$]- and EXC[$k$]-formulas could be translated into $k$-ary ESO, ESO[$k$], which gives us an upper bound for the expressive power of EXC[$k$]. In \cite{Ronnholm15} it was also shown that conversely ESO[$k$]-formulas with at most $k$-ary free relation variables can be expressed in $k$-ary inclusion-exclusion logic, INEX[$k$], and consequently INEX[$k$] captures ESO[$k$] on the level of sentences. 

Since exclusion logic is closed downwards, unlike inclusion-exclusion logic, we know that EXC[$k$] is strictly weaker than INEX[$k$]. However, in certain cases we can simulate the use of inclusion atoms with exclusion atoms: Suppose that $x$, $w$, $w^c$ are variables such that the sets of values of $w$ and $w^c$ in $X$ are complements of each other. Now we have $\mathcal{M}\true_X x\subseteq w$ iff $\mathcal{M}\true_X x\mid w^c$. This can be generalized for $k$-ary atoms if the values of $k$-tuples $\tuple w$ and $\tuple w^c$ are complementary (with respect to the full relation $M^k$).

We will use the observation above to modify our translation (in \cite{Ronnholm15}) from ESO[$k$] to INEX[$k$]. If we only consider \emph{sentences} of exclusion logic, we can quantify the needed complementary values, and then replace inclusion atoms in the translation with the corresponding exclusion atoms. The remaining problem is that in our translation we also needed a new connective called \emph{term value preserving disjunction} (\cite{Ronnholm15}) to avoid the loss of information on the values of certain variables when evaluating disjunctions. This operator can be defined by using both inclusion and exclusion atoms (\cite{Ronnholm15}), but it is undefinable in exclusion logic since it is not closed downwards.

In \cite{Ronnholm15} we have introduced new operators called \emph{inclusion and exclusion quantifiers} and defined them in inclusion-exclusion logic. Furthermore, we have shown that \emph{universal inclusion quantifier} $(\Ae\tuple x\inc\tuple t\,)$ could be defined also in exclusion logic. A natural reading for this quantifier is: ``for all the values of $\tuple x$ that are included in the values of $\tuple t$\,''. We will now consider the use of this quantifier in somewhat trivial looking form $(\Ae\tuple x\inc\tuple x)$. This operator turns out to be useful as it ``unifies'' the values of variables in a team. We will use it to define new operators called \emph{unifier}, \emph{unified existential quantifier} and \emph{unifying disjunction}.

Unifying disjunction will give us an alternative method to avoid the loss of information in the translation from ESO[$k$]. This completes our translation and proves the equivalence between EXC[$k$] and ESO[$k$] on the level of sentences. Hence we also obtain a strict arity hierarchy for exclusion logic since the arity hierarchy for ESO is known to be strict. We also get an interesting consequence that $k$-ary inclusion logic is \emph{strictly} weaker than EXC[$k$] on the level of sentences (for any $k\geq 1$).

Finally, we will examine the expressive power of inclusion logic with an alternative semantics, so-called \emph{strict semantics}. This semantical variant of inclusion logic has stronger expressive power by capturing the whole ESO, as shown by Galliani, Hannula and Kontinen \cite{Kontinen13a}, but lacks some nice semantical properties. We will use similar ideas, as in our translation from ESO[$k$] to EXC[$k$], to formulate a translation from ESO[$k$] to INC[$k$] with strict semantics. Consequently, for any arity fragment of inclusion logic, strict semantics is more expressive than the standard semantics.

The structure of this paper is as follows: After preliminaries in Section~\ref{sec: Preliminaries}, we present various new operators
for exclusion logic Section~\ref{sec: Operators}. In Section~\ref{sec: Expressive power} we prove our main result by forming a translation from ESO[$k$] to EXC[$k$] on the level of sentences. Finally, in Section~\ref{sec: Expressive power of INCs}, we present a translation from ESO[$k$] to INC[$k$] with strict semantics. This paper is an extended journal version of \cite{Ronnholm16a} with more detailed proofs and additional examples. Moreover, all the results in Section~\ref{sec: Expressive power of INCs} are previously unpublished work.


\section{Preliminaries}\label{sec: Preliminaries}

In this section we first present the team semantics for FO. Then we define inclusion and exclusion logics and review some of their known properties.


\subsection{Syntax and team semantics for $\FO$}

\emph{A vocabulary} $L$ is a set of \emph{relation symbols}~$R$, \emph{function symbols} $f$ and \emph{constant symbols} $c$. The set of \emph{$L$-terms}, $\Tset$, is defined in the standard way. The set of variables occurring in a tuple $\tuple t$ of $L$-terms is denoted by $\vr(\tuple t\,)$. 
%
%
\begin{definition}\label{def: FOset}
The set of $\FOset$-formulas is defined as follows:
\[
	\varphi::=\, t_1\!=\!t_2 \mid \neg t_1\!=\!t_2 \mid R\,\tuple t \mid \neg R\,\tuple t 
	\mid (\varphi\wedge\varphi) \mid (\varphi\vee\varphi) \mid \Ee x\,\varphi \mid \Ae x\,\varphi
\]
$\FOset$-formulas of the form $t_1\!=\!t_2$, $\neg t_1\!=\!t_2$, $R\,\tuple t$ and $\neg R\,\tuple t$ are called \emph{literals}.
\end{definition}
Let $\varphi\in\FOset$. We denote the set of \emph{subformulas} of $\varphi$ by $\subf(\varphi)$, the set of variables occurring in $\varphi$ by $\vr(\varphi)$ and the set of \emph{free variables} of $\varphi$ by $\fr(\varphi)$.


An \emph{$L$-model} $\mathcal{M}=(M,\mathcal{I})$, where the \emph{universe} $M$ is any nonempty set and the \emph{interpretation} $\mathcal{I}$ is a function whose domain is the vocabulary~$L$. The interpretation $\mathcal{I}$ maps constant symbols to elements in $M$, $k$-ary relation symbols to $k$-ary relations in $M$ and $k$-ary function symbols to functions $M^k\rightarrow M$. For all $k\in L$ we write $k^\mathcal{M}:=\mathcal{I}(k)$. An \emph{assignment} $s$ for $M$ is a function that is defined in some set of variables, $\dom(s)$, and ranges over $M$. A \emph{team} $X$ for $M$ is any set of assignments for $M$ with a common domain, denoted by $\dom(X)$.

Let $s$ be an assignment and $a$ be any element in $M$. The assignment $s[a/x]$ is defined in $ \dom(s)\cup\{x\}$, and it maps the variable $x$ to $a$ and all other variables as $s$. If $\tuple x:= x_1\dots x_k$ is a tuple of variables and $\tuple a := (a_1,\dots,a_k)\in M^k$, we write $s[\tuple a/\tuple x\,] := s[a_1/x_1,\dots,a_k/x_k]$. For a team $X$, a set $A\subseteq M^k$ and for a function $\mathcal{F}:X\rightarrow\mathcal{P}(M^k)\setminus\{\emptyset\}$ we use the following notations.
\[
	\begin{cases}
		X[A/\tuple x\,]  := \bigl\{s[\tuple a/\tuple x\,]\mid s\in X,\,\tuple a\in A\bigr\} \\
		X[\mathcal{F}/\tuple x\,]  := \bigl\{s[\tuple a/\tuple x\,]\mid s\in X,\,\tuple a\in\mathcal{F}(s)\bigr\}.
	\end{cases}
\]
Let $\mathcal{M}$ be an $L$-model, $s$ an assignment and $t$ an $L$-term s.t. $\vr(t)\subseteq\dom(s)$. The \emph{interpretation of $t$ with respect to $\mathcal{M}$ and $s$} is denoted simply by $s(t)$. For a team $X$ and a tuple $\tuple t:= t_1\dots t_k$ of $L$-terms we write
\[
	s(\tuple t\,) := \,(s(t_1),\dots,s(t_k)) \;\text{ and }\; X(\tuple t\,) := \,\{s(\tuple t\,)\mid s\in X\}.
\]

If $A\subseteq M^k$, we write $\overline A :=M^k\setminus A$. We are now ready to define \emph{team semantics} for first order logic.
\begin{definition}\label{def: Team semantics}
Let $\mathcal{M}$ be an $L$-model, $\varphi\in\FOset$ and $X$ a team such that $\fr(\varphi)\subseteq\dom(X)$. We define the \emph{truth of $\varphi$ in the model $\mathcal{M}$ and the team $X$}, $\mathcal{M}\true_X\varphi$, as follows.
\begin{itemize}
\item $\mathcal{M}\true_X t_1\!=\!t_2$ iff \,$s(t_1)=s(t_2)$ for all $s\in X$.
\item $\mathcal{M}\true_X \neg t_1\!=\!t_2$\, iff \,$s(t_1)\neq s(t_2)$ for all $s\in X$.
\item $\mathcal{M}\true_X R\,\tuple t$\, iff\, $X(\tuple t\,)\subseteq R^\mathcal{M}$.
\item $\mathcal{M}\true_X \neg R\,\tuple t$\, iff\, $X(\tuple t\,)\subseteq \overline{R^\mathcal{M}}$.
\item $\mathcal{M}\true_X \psi\wedge\theta$\, iff \,$\mathcal{M}\true_X \psi$ and $\mathcal{M}\true_X \theta$.
\item $\mathcal{M}\true_X \psi\vee\theta$\, iff \,there are $Y,Y'\!\subseteq\!X$ s.t. $Y\cup Y'=X$,
	$\mathcal{M}\true_Y \psi$ and $\mathcal{M}\true_{Y'} \theta$.
\item $\mathcal{M}\true_X\Ee x\,\psi$\, iff \,there exists $F:X\rightarrow\mathcal{P}(M)\setminus\{\emptyset\}$\,
	 s.t. $\mathcal{M}\true_{X[F/x]}\psi$.
\item $\mathcal{M}\true_X\Ae x\,\psi$\, iff \,$\mathcal{M}\true_{X[M/x]}\psi$.
\end{itemize}
\end{definition}
\begin{remark}
Above we have defined so-called \emph{lax-semantics} in which we may select several witnesses when existentially quantifying variables and we may allow the ``witnessing teams'' ($Y$ and $Y'$) for disjunction to overlap. These operators also have an alternative so-called \emph{strict semantics}:

\smallskip

\begin{itemize}
\item $\mathcal{M}\true_X \psi\vee\theta$\, iff \,there are $Y,Y'\!\subseteq\!X$ s.t. $Y\cup Y'=X$, \\ 
\text{}\hspace{3cm}$Y\cap Y'=\emptyset$,
$\mathcal{M}\true_Y \psi$ and $\mathcal{M}\true_{Y'} \theta$.

\smallskip

\item $\mathcal{M}\true_X\Ee x\,\psi$  iff there is $F:X\rightarrow M$ s.t. $\mathcal{M}\true_{X[F/x]}\psi$, \\ 
\text{}\hspace{3cm}where $X[F/x]$ is the team $\{x[F(s)/x]\mid s\in X\}$.
\end{itemize}

\smallskip

For FO these two semantic variants are equivalent. Galliani \cite{Galliani12b} has shown that they are also equivalent for exclusion logic but not for inclusion logic.
\end{remark}
\noindent
For tuples $\tuple t:= t_1\dots t_k$ and $\tuple{t'}:= t_1'\dots t_k'$ of $L$-terms we write
\begin{align*}
	\tuple t \!=\! \tuple{t'} \; &:= \; \bigwedge_{i\leq k} t_i\!=\!t_i'
	\qquad\text{and}\qquad
	\tuple t \!\neq\! \tuple{t'} \; := \; \bigvee_{i\leq k} \neg t_i\!=\!t_i'.
\end{align*}
It is easy to see that the following equivalences hold:
\begin{align*}
	\mathcal{M}\true_X\tuple t = \tuple{t'}\, &\;\text{ iff } s(\tuple t\,) = s(\tuple{t'}) \text{ for all } s\in X \\
	\mathcal{M}\true_X\tuple t \neq \tuple{t'}\, &\;\text{ iff } s(\tuple t\,) \neq s(\tuple{t'}) \text{ for all } s\in X. 
\end{align*}

For $\varphi\in\FOset$ and tuple $\tuple x:= x_1\dots x_k$ we write: $\Ee\tuple x\,\varphi := \Ee x_1\dots\Ee x_k\varphi$ and $\Ae\tuple x\,\varphi := \Ae x_1\dots\Ae x_k\varphi$. It is easy to show that
\begin{itemize}
\item $\mathcal{M}\true_X\Ee\tuple x\,\varphi$\, iff there exists $\mathcal{F}:X\rightarrow\mathcal{P}(M^k)\setminus\{\emptyset\}$ s.t. $\mathcal{M}\true_{X[\mathcal{F}/\tuple{x}\,]}\varphi$.
\smallskip
\item $\mathcal{M}\true_X\Ae\tuple x\,\varphi$\, iff $\mathcal{M}\true_{X[M^k/\tuple{x}\,]}\varphi$.
\end{itemize}
In strict semantics the first condition turns into the form: \;$\mathcal{M}\true_X\Ee\tuple x\,\varphi$\, iff there exists $\mathcal{F}:X\rightarrow M^k$ s.t. $\mathcal{M}\true_{X[\mathcal{F}/\tuple{x}\,]}\varphi$, where $X[\mathcal{F}/\tuple x\,]  := \{s[\mathcal{F}(s)/\tuple x\,]\mid s\in X\}$.
First order logic with team semantics has so-called \emph{flatness}-property:
\begin{proposition}[\cite{Vaananen07}, Flatness]\label{the: Flatness}
Let $X$ be a team and $\varphi\in\FOset$. The following equivalence holds:
\;$\mathcal{M}\true_X\varphi\, \,\text{ iff }\; \mathcal{M}\true_{\{s\}} \varphi \,\text{ for all } s\in X$.
\end{proposition}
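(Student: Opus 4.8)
The plan is to argue by induction on the structure of $\varphi\in\FOset$. Before starting the induction proper, I would record the auxiliary \emph{empty team property}: $\mathcal{M}\true_\emptyset\varphi$ holds for every $\varphi\in\FOset$. This is itself a routine induction — for literals it holds vacuously; for $\wedge$ and $\Ae x$ it is immediate from the induction hypothesis; for $\vee$ one uses the split $\emptyset=\emptyset\cup\emptyset$; for $\Ee x$ one uses the empty function — and it is needed to handle a degenerate split in the disjunction case below. Note also that the claimed equivalence is trivial when $X=\emptyset$: the right-hand side holds vacuously and the left-hand side is exactly the empty team property.

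For the base cases, each literal has a truth condition that already quantifies pointwise over the assignments of the team: for instance $\mathcal{M}\true_X R\,\tuple t$ iff $X(\tuple t\,)\subseteq R^\mathcal{M}$ iff $s(\tuple t\,)\in R^\mathcal{M}$ for every $s\in X$ iff $\mathcal{M}\true_{\{s\}} R\,\tuple t$ for every $s\in X$, and the other three literal forms are entirely analogous. The conjunction case is immediate from the induction hypothesis, since both $\mathcal{M}\true_X\psi\wedge\theta$ and ``$\mathcal{M}\true_{\{s\}}\psi\wedge\theta$ for all $s\in X$'' unfold to the conjunction of the corresponding statements for $\psi$ and for $\theta$.

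The cases carrying the real content are disjunction and the quantifiers, and they all rest on the same idea: a witness for the whole team is obtained by \emph{gluing} witnesses for the singletons, and conversely a witness for the whole team \emph{restricts} to each singleton. For $\psi\vee\theta$, given a split $X=Y\cup Y'$ witnessing the left-hand side, each $s\in X$ lies in $Y$ or in $Y'$, and using the induction hypothesis together with the empty team property we obtain $\mathcal{M}\true_{\{s\}}\psi\vee\theta$ via the split $\{s\}=\{s\}\cup\emptyset$ or $\{s\}=\emptyset\cup\{s\}$. Conversely, if for each $s\in X$ we are given a split $\{s\}=Y_s\cup Y_s'$ (so each $Y_s,Y_s'\in\{\emptyset,\{s\}\}$), then $Y:=\bigcup_{s\in X}Y_s$ and $Y':=\bigcup_{s\in X}Y_s'$ satisfy $Y\cup Y'=X$, and since $s\in Y$ forces $Y_s=\{s\}$ we get $\mathcal{M}\true_{\{s\}}\psi$ for every $s\in Y$, hence $\mathcal{M}\true_Y\psi$ by the induction hypothesis, and symmetrically for $\theta$. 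For $\Ee x\,\psi$ one glues functions $F_s\colon\{s\}\to\mathcal{P}(M)\setminus\{\emptyset\}$ into $F\colon X\to\mathcal{P}(M)\setminus\{\emptyset\}$ by $F(s):=F_s(s)$, using that $X[F/x]=\bigcup_{s\in X}\{s\}[F_s/x]$ and applying the induction hypothesis to $\psi$ on $X[F/x]$; the converse direction simply restricts a given $F$ to each singleton. For $\Ae x\,\psi$ there is no choice to make: $X[M/x]=\bigcup_{s\in X}\{s\}[M/x]$, so applying the induction hypothesis to $\psi$ on $X[M/x]$ immediately yields the equivalence.

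The only step requiring genuine care — the closest thing to an obstacle — is the forward direction of the disjunction case, where one must invoke the empty team property: without it, the singleton split $\{s\}=\{s\}\cup\emptyset$ would put the empty set on one side, which is admissible in team semantics but only because the empty team satisfies every formula. Everything else is bookkeeping with unions of teams and choice functions.
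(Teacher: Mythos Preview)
Your proof is correct and follows the standard structural-induction argument for flatness. Note, however, that the paper does not actually give its own proof of this proposition: it is stated as a result from \cite{Vaananen07} and left unproved, so there is nothing in the paper to compare your argument against. Your write-up is essentially the textbook proof and would serve perfectly well as a self-contained justification.
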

We use notations $\true_s^\text{T}$ and $\true^\text{T}$ for truth in a model with standard Tarski semantics. Team semantics can be seen just as a generalization of Tarski semantics as shown by the following proposition.
\begin{proposition}[\cite{Vaananen07}]\label{the: Tarski}
The following equivalences hold:
\begin{align*}
	\mathcal{M}\true_s^\text{\emph{T}}\varphi\, &\text{ iff }\, \mathcal{M}\true_{\{s\}}\varphi
	\quad\text{ for all $\FOset$-formulas } \varphi \text{ and assignments } s. \\[-0,05cm]
	\mathcal{M}\true^\text{\emph{T}}\varphi\, &\text{ iff }\, \mathcal{M}\true_{\{\emptyset\}}\varphi
	\quad\text{ for all $\FOset$-sentences } \varphi.
\end{align*}
\end{proposition}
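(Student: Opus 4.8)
The plan is to establish the first equivalence, $\mathcal{M}\true_s^{\text{T}}\varphi$ iff $\mathcal{M}\true_{\{s\}}\varphi$, by induction on the structure of $\varphi\in\FOset$, and then read off the second one as the special case in which $\varphi$ is a sentence and $s=\emptyset$. Before starting the induction I would record the \emph{empty team property}: $\mathcal{M}\true_\emptyset\psi$ holds for every $\psi\in\FOset$, since every clause of Definition~\ref{def: Team semantics} is satisfied vacuously when the team is $\emptyset$. This trivial fact is the only thing the disjunction step genuinely needs.

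For the base case, if $X=\{s\}$ then $X(\tuple t\,)=\{s(\tuple t\,)\}$, so $\mathcal{M}\true_{\{s\}}R\,\tuple t$ iff $s(\tuple t\,)\in R^{\mathcal{M}}$ iff $\mathcal{M}\true_s^{\text{T}}R\,\tuple t$; the negated relational literal and the two (in)equality literals are handled the same way, using that ``for all $s'\in\{s\}$'' is just ``for $s$''. In the inductive step, conjunction is immediate from the induction hypothesis. For $\psi\vee\theta$, observe that the only subteams of $\{s\}$ are $\emptyset$ and $\{s\}$, so any split $Y\cup Y'=\{s\}$ has $Y=\{s\}$ or $Y'=\{s\}$; combining this with the induction hypothesis and the empty team property yields $\mathcal{M}\true_{\{s\}}\psi\vee\theta$ iff ($\mathcal{M}\true_{\{s\}}\psi$ or $\mathcal{M}\true_{\{s\}}\theta$) iff $\mathcal{M}\true_s^{\text{T}}\psi\vee\theta$. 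For $\Ee x\,\psi$ I would invoke the strict semantics from the Remark (alternatively Proposition~\ref{the: Flatness}): $\mathcal{M}\true_{\{s\}}\Ee x\,\psi$ iff there is $a\in M$ with $\mathcal{M}\true_{\{s[a/x]\}}\psi$, which by the induction hypothesis holds iff there is $a\in M$ with $\mathcal{M}\true_{s[a/x]}^{\text{T}}\psi$, i.e. iff $\mathcal{M}\true_s^{\text{T}}\Ee x\,\psi$. For $\Ae x\,\psi$, I would apply Proposition~\ref{the: Flatness} to the team $\{s[a/x]\mid a\in M\}$: it satisfies $\psi$ iff each singleton $\{s[a/x]\}$ does, which by the induction hypothesis means $\mathcal{M}\true_{s[a/x]}^{\text{T}}\psi$ for every $a\in M$, i.e. $\mathcal{M}\true_s^{\text{T}}\Ae x\,\psi$.

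The second equivalence is then immediate: for an $\FOset$-sentence $\varphi$ we have $\fr(\varphi)=\emptyset$, so the empty assignment $\emptyset$ is admissible, $\mathcal{M}\true^{\text{T}}\varphi$ means $\mathcal{M}\true_\emptyset^{\text{T}}\varphi$, and the first equivalence applied with $s=\emptyset$ gives $\mathcal{M}\true_\emptyset^{\text{T}}\varphi$ iff $\mathcal{M}\true_{\{\emptyset\}}\varphi$. I do not expect a genuine obstacle here; the only places needing a little care are the disjunction clause, where the empty team property is used to realise the degenerate split, and the quantifier clauses, where substitution can turn the singleton $\{s\}$ into a larger team, so one must fall back on flatness (or strict semantics) to reduce to singletons again.
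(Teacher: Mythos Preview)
Your argument is correct and is exactly the standard structural induction one would expect. Note, however, that the paper does not supply its own proof of this proposition: it is quoted from \cite{Vaananen07} and stated without proof, so there is nothing in the paper to compare your approach against beyond the fact that your induction is the orthodox one.

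One minor remark: in the universal-quantifier step you appeal to Proposition~\ref{the: Flatness}. That is perfectly legitimate in the paper's ordering (flatness is stated first and is a purely team-semantic statement, so there is no circularity), but since flatness is itself typically proved by the same kind of induction, you could equally well handle $\Ae x\,\psi$ directly: if $\mathcal{M}\true_{\{s\}[M/x]}\psi$ then for each $a\in M$ the subteam $\{s[a/x]\}$ satisfies $\psi$ by downward closure (or simply because the induction hypothesis, once established for all smaller formulas, already gives flatness for $\psi$), and conversely the union of the singletons is the whole team. Either route works; yours is slightly slicker.
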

Note that, by flatness, $\mathcal{M}\true_X\varphi$ if and only if $\mathcal{M}\true_s^\text{T}\varphi$ for all $s\in X$. By Proposition~\ref{the: Tarski} it is natural to write $\mathcal{M}\true\varphi$, when we mean that $\mathcal{M}\true_{\{\emptyset\}}\varphi$. Note that $\mathcal{M}\true_\emptyset\varphi$ holds trivially for all $\FOset$-formulas $\varphi$ by Definition \ref{def: Team semantics}. In general we say that any logic $\mathcal{L}$ with team semantics has the \emph{empty team property} if $\mathcal{M}\true_\emptyset\varphi$ holds for all $\mathcal{L}$-formulas~$\varphi$. We define three more important properties for any logic $\mathcal{L}$ with team semantics.
\begin{definition}
Let $\mathcal{L}$ be any logic with team semantics. We say that
\begin{itemize}
\item $\mathcal{L}$ is \emph{local}, if the truth of formulas is determined only by the values of their free variables in a team, i.e. we have: \;$\mathcal{M}\true_X\varphi\, \text{ iff } \mathcal{M}\true_{X\upharpoonright\fr(\varphi)} \varphi$.
\smallskip
\item $\mathcal{L}$ is \emph{closed downwards} if: \;$\mathcal{M}\true_X\varphi \text{ and } Y\subseteq X \Rightarrow \mathcal{M}\true_Y\varphi$.
\item $\mathcal{L}$ is \emph{closed under unions} if: \;$\mathcal{M}\true_{X_i}\varphi \text{ for every } i\in I \Rightarrow \mathcal{M}\true_{\cup_{i\in I}X_i}\varphi$.
\end{itemize}
\smallskip
By flatness it is easy to see that FO is local and closed both downwards and under unions.
\end{definition}


\subsection{Inclusion and exclusion logics}

Inclusion logic ($\INC$) and exclusion logic ($\EXC$) are obtained by adding inclusion and exclusion atoms, respectively, to FO with team semantics.
\begin{definition}\label{def: Inclusion and exclusion logics}
If $\tuple t_1,\tuple t_2$ are $k$-tuples of $L$-terms, $\tuple t_1\inc\tuple t_2$ is a \emph{$k$-ary inclusion atom}. $\INCset$-formulas are formed like $\FOset$-formulas by allowing the use of (non-negated) inclusion atoms like literals.
Let $\mathcal{M}$ be a model and $X$ a team s.t. $\vr(\tuple t_1\tuple t_2)\subseteq\dom(X)$. We define the truth of $\tuple t_1\subseteq\tuple t_2$ in $\mathcal{M}$ and $X$ as:
\[
	\mathcal{M}\true_X \tuple t_1\subseteq\tuple t_2\;\, \text{ iff \;for all } s\in X \text{ there exists } s'\in X 
	\text{ s.t. } s(\tuple t_1)=s'(\tuple t_2).
\]
Equivalently we have $\mathcal{M}\true_X \tuple t_1\subseteq\tuple t_2$ \,iff\, $X(\tuple t_1)\subseteq X(\tuple t_2)$.

\medskip

If $\tuple t_1,\tuple t_2$ are $k$-tuples of $L$-terms, $\tuple t_1\,|\,\tuple t_2$ is a \emph{$k$-ary exclusion atom}. $\EXCset$-formulas are formed as $\FOset$-formulas, but (non-negated) exclusion atoms may be used as literals are used in $\FO$.
Let $\mathcal{M}$ be a model and $X$ a team for which we have $\vr(\tuple t_1\tuple t_2)\subseteq\dom(X)$. We define the truth of $\tuple t_1\mid\tuple t_2$ in $\mathcal{M}$ and $X$ as:
\[
	\mathcal{M}\true_X \tuple t_1\mid\tuple t_2\;\, \text{ iff \;for all } s,s'\in X: \, s(\tuple t_1)\neq s'(\tuple t_2).
\]
Equivalently we have $\mathcal{M}\true_X \tuple t_1\mid\tuple t_2$ \,iff\, $X(\tuple t_1)\cap X(\tuple t_2) = \emptyset$ \;$(\,\text{iff } X(\tuple t_1)\subseteq\overline{X(\tuple t_2)}\,)$.
\end{definition}
\smallskip
Inclusion-exclusion logic (INEX) is defined simply by allowing the use of both inclusion and exclusion atoms. If $\varphi\in\EXCset$ contains at most $k$-ary exclusion atoms, we say that $\varphi$ is a formula of \emph{$k$-ary exclusion logic}, $\EXC[k]$. Moreover, \emph{$k$-ary inclusion logic} (INC[$k$]) and \emph{$k$-ary inclusion-exclusion logic} (INEX[$k$]) are defined analogously.

The following properties have all been shown by Galliani~\cite{Galliani12b}: $\EXC$, $\INC$ and $\INEX$ are all local and satisfy the empty team property. $\EXC$ is also closed downwards, unlike $\INC$ which is closed under unions.
If we use strict semantics for INC, the resulting logic is not local. This is one of the reasons why the (lax)-semantics given in Definition~\ref{def: Team semantics} is usually considered to be more natural.

We denote inclusion logic with strict semantics by INC$^s$ and its $k$-ary fragment by INC$^s[k]$. 
%
%
Galliani, Hannula and Kontinen \cite{Kontinen13a} have shown INC$^s$ is equivalent with ESO. Thus, on the level of sentences, INC$^s$ is equivalent with exclusion logic and stronger than (the standard) inclusion logic. We will study the properties of INC$^s[k]$ in Section~\ref{sec: Expressive power of INCs}.


\section{Useful operators for exclusion logic}\label{sec: Operators}

In this section we will define several operators for EXC[$k$]. We first review how \emph{$k$-ary dependence atoms} and \emph{intuitionistic disjunction} can be expressed in EXC[$k$]. Then we show how the values of certain tuples of terms can be \emph{unified} by using \emph{universal inclusion quantifier} that can be defined for EXC. With this technique we can define other useful operators for this framework.


\subsection{Dependence atoms and intuitionistic disjunction}

Let us review the semantics for \emph{dependence atoms} of \emph{dependence logic} (\cite{Vaananen07}). Let $t_1\dots t_k$ be $L$-terms. The $k$-ary dependence atom $\dep(t_1\dots t_{k-1},t_k)$ has the following truth condition: $\mathcal{M}\true_X\dep(t_1\dots t_{k-1},t_k)$ if and only if we have:
\[
	\text{for all } s,s'\in X \text{ for which } s(t_1\dots t_{k-1}) =  s'(t_1\dots t_{k-1}) \text{ also } s(t_k) = s'(t_k),
\]
for all $L$-models $\mathcal{M}$ and teams $X$ for which $\vr(t_1\dots t_k)\subseteq\dom(X)$. This truth condition can be read as follows: ``the value of $t_k$ is (functionally) dependent on the values of $t_1,\dots,t_{k-1}$''.
By using Galliani's translation between dependence logic and exclusion logic, we can express $k$-ary dependence atoms in EXC[$k$]:
\begin{proposition}[\cite{Galliani12b}]\label{the: Expressing dependence atom}
Let $\tuple t=t_1\dots t_k$ be a tuple of $L$-terms. The $k$-ary dependence atom $\dep(t_1\dots t_{k-1},t_k)$ is equivalent with the $\EXCset[k]$-formula $\varphi$:
\[
	\varphi :=\; \Ae x\,(x = t_k\,\vee\;t_1\dots t_{k-1}x \mid \tuple t\,), \; \text{ where $x$ is a fresh variable.}
\]
\end{proposition}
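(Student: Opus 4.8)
The plan is to prove the equivalence straight from the semantics: for every model $\mathcal{M}$ and team $X$ with $\vr(\tuple t\,)\subseteq\dom(X)$ I want $\mathcal{M}\true_X\dep(t_1\dots t_{k-1},t_k)$ iff $\mathcal{M}\true_X\varphi$. First I would unwind the right-hand side. Write $\tuple u := t_1\dots t_{k-1}$, so that $\tuple t=\tuple u\,t_k$ as $k$-tuples. By the clause for $\Ae$ we have $\mathcal{M}\true_X\varphi$ iff $\mathcal{M}\true_{X'}(x=t_k\vee\tuple u x\mid\tuple t\,)$, where $X':=X[M/x]$; and by the clause for $\vee$ this holds iff there is a cover $X'=Y\cup Y'$ with $\mathcal{M}\true_Y x=t_k$ and $\mathcal{M}\true_{Y'}\tuple u x\mid\tuple t$. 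The structural fact I would isolate at the outset is that, since $x$ is fresh, $x\notin\vr(\tuple t\,)$, so the values $s(\tuple u\,)$ and $s(t_k)$ of any $s\in X'$ do not depend on $s(x)$: writing $s=r[a/x]$ with $r\in X$, $a\in M$, we get $s(\tuple u\,)=r(\tuple u\,)$, $s(t_k)=r(t_k)$ and $s(x)=a$.

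For the implication from the dependence atom to $\varphi$, the natural cover is $Y:=\{s\in X':s(x)=s(t_k)\}$ and $Y':=X'\setminus Y$. Then $\mathcal{M}\true_Y x=t_k$ is immediate from the construction of $Y$. For $\mathcal{M}\true_{Y'}\tuple u x\mid\tuple t$ I would argue by contradiction: if $s,s'\in Y'$ satisfied $s(\tuple u x)=s'(\tuple u\,t_k)$, then, writing $s=r[a/x]$ and $s'=r'[a'/x]$ with $r,r'\in X$, we would get $r(\tuple u\,)=r'(\tuple u\,)$ and $a=r'(t_k)$; the dependence atom on $X$ then forces $r(t_k)=r'(t_k)=a=s(x)$, hence $s(x)=r(t_k)=s(t_k)$, contradicting $s\in Y'=X'\setminus Y$.

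For the converse, given a cover $Y,Y'$ as above, I would take $r,r'\in X$ with $r(\tuple u\,)=r'(\tuple u\,)$ and show $r(t_k)=r'(t_k)$. Suppose not; then $r\neq r'$, so $M$ has at least two elements and we may pick $b\in M$ with $b\neq r'(t_k)$. Put $s:=r[r'(t_k)/x]$ and $s':=r'[b/x]$, both in $X'$. Since $s(x)=r'(t_k)\neq r(t_k)=s(t_k)$, the clause $\mathcal{M}\true_Y x=t_k$ forbids $s\in Y$, so $s\in Y'$; likewise $s'(x)=b\neq r'(t_k)=s'(t_k)$ gives $s'\in Y'$. But $s(\tuple u x)=(r(\tuple u\,),r'(t_k))=(r'(\tuple u\,),r'(t_k))=(s'(\tuple u\,),s'(t_k))=s'(\tuple u\,t_k)$, contradicting $\mathcal{M}\true_{Y'}\tuple u x\mid\tuple t$. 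Hence $r(t_k)=r'(t_k)$, i.e. $\mathcal{M}\true_X\dep(t_1\dots t_{k-1},t_k)$.

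The step I expect to need the most care is exactly the production of the two witnesses $s,s'$ inside $Y'$ in the converse direction: a priori nothing prevents them from both landing in $Y$, and the point is that a genuine failure of functional dependence yields two distinct assignments, hence a universe with at least two elements, which is precisely what lets me choose the $x$-components so that neither witness satisfies $x=t_k$. (If $|M|=1$, or $X=\emptyset$, both formulas hold trivially by the empty team property and a counting argument, so this edge case is harmless.) Everything else is routine unwinding of the team-semantic clauses together with the freshness of $x$; flatness is available if one prefers to phrase the literal $x=t_k$ assignment-wise, but it is not strictly needed.
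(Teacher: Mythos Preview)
Your argument is correct. The forward direction with the canonical split $Y=\{s\in X':s(x)=s(t_k)\}$ and $Y'=X'\setminus Y$ works as written, and in the converse direction your construction of the two witnesses $s=r[r'(t_k)/x]$ and $s'=r'[b/x]$ is exactly the right move: a failure of functional dependence gives two distinct values $r(t_k)\neq r'(t_k)$ in $M$, so $\abs{M}\geq 2$ is automatic and the choice of $b\neq r'(t_k)$ is available. Both $s$ and $s'$ are forced out of $Y$ by the literal $x=t_k$, hence land in $Y'$, and then $s(\tuple u\,x)=s'(\tuple t\,)$ violates the exclusion atom. The edge cases $X=\emptyset$ and $\abs{M}=1$ are indeed harmless for the reasons you give.

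As for comparison with the paper: there is nothing to compare against. The proposition is stated with attribution to Galliani and no proof is supplied in the present paper; the author simply imports the result. Your self-contained semantic verification is therefore more than what the paper offers here, and it is the natural direct argument one would expect.
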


Hence, in particular, we can express \emph{constancy atom}\footnote{$\dep(t)$ is true in a nonempty team $X$ \,iff\, $t$ has a constant value in $X$, i.e. $\abs{X(t)}=1$.} $\dep(t)$ in EXC[$k$] for any $k\geq 1$. 
The semantics of \emph{intuitionistic disjunction} $\sqcup$ is obtained by lifting the Tarski semantics of classical disjunction from single assignments to teams. That is, $\mathcal{M}\true_X \varphi\sqcup\psi \;\text{ iff }\; \mathcal{M}\true_X\varphi \text{ or } \mathcal{M}\true_X\psi$.
Galliani \cite{Galliani12a} has shown that this operator can be expressed by using constancy atoms. Hence we can can define it as an abbreviation in $\EXC[k]$ for any $k\geq 1$.


\subsection{Universal inclusion quantifier and unifier}\label{ssec: Uniformization}

In \cite{Ronnholm15} we have considered inclusion and exclusion dependencies from a new perspective by introducing inclusion and exclusion quantifiers. These quantifiers range over values of certain terms (or their complements) in the team instead of the whole universe $M$. We review here the semantics for \emph{universal} inclusion and exclusion quantifiers $(\Ae\tuple x\inc\tuple t\,)$ and $(\Ae\tuple x\exc\tuple t\,)$. Let $\tuple x$ be a $k$-tuple of variables, $\tuple t$ a $k$-tuple of $L$-terms and $\varphi\in\INEXset$. Now $(\Ae\tuple x\inc\tuple t\,)$ and $(\Ae\tuple x\exc\tuple t\,)$ have the following truth conditions: 
\begin{align*}
	\mathcal{M}\true_X (\Ae\tuple x\inc\tuple t\,)\,\varphi
	&\;\text{ iff }\, \mathcal{M}\true_{X[A/\tuple x\,]}\varphi, \text{ where } A = X(\tuple{t}\,). \\
	\mathcal{M}\true_X (\Ae\tuple x\exc\tuple{t}\,)\,\varphi 
	&\;\text{ iff }\, \mathcal{M}\true_{X[A/\tuple x\,]}\varphi, \text{ where }A = \overline{X(\tuple{t}\,)}.	
\end{align*}

The quantifier $(\Ae\tuple x\inc\tuple t\,)$ has a natural reading: ``for all tuples $\tuple x$ that are included in the values of $\tuple t$\,''. And likewise $(\Ae\tuple x\exc\tuple t\,)$ can be read as: ``for all tuples $\tuple x$ that are excluded of the values of $\tuple t$\,''.
These quantifiers can be defined in INEX by using the following idea: we first universally quantify $\vec x$ and then use inclusion and and exclusion atoms along with disjunction to force the team to be split into subteams $X[X(\vec t)/\vec x\,]$ and $X[\overline{X(\vec t)}/\vec x\,]$; then we just state that $\varphi$ holds in the corresponding subteam (see \cite{Ronnholm15} for more details).

In order to define these quantifiers as abbreviations in $\INEX$ we needed to use \emph{both} $k$-ary inclusion and exclusion atoms (see \cite{Ronnholm15} for details). However, we can alternatively define a quantifier $(\Ae\tuple x\ince\tuple t)$ as an abbreviation by using only $k$-ary  exclusion atoms (\cite{Ronnholm15}). This quantifier has the same truth condition as $(\Ae\tuple x\inc\tuple t\,)$ above, when $\varphi$ is a formula of \emph{exclusion logic}.
Hence the universal inclusion quantifier for $k$-tuples of variables can be defined for both $\INEX[k]$ and $\EXC[k]$, although these definitions have to be given differently. From now on we will always use the plain notation $(\Ae\tuple x\inc\tuple t\,)$ and assume it be defined in the right way depending on whether we use it with $\INEX$ or $\EXC$.  

When defining quantifier $(\Ae\tuple x\inc\tuple t\,),$ we allowed the variables in the tuple $\tuple x$ to occur in $\vr(\tuple t\,)$. In particular, we accept the quantifiers of the form $(\Ae \tuple x\inc\tuple x)$. Quantifiers of this form may seem trivial, but they turn out to be rather useful operators. Let us analyze their truth condition:
\[
	\mathcal{M}\true_X (\Ae\tuple x\inc\tuple x)\,\varphi \;\text{ iff }\, \mathcal{M}\true_{X'}\varphi,
	\text{ where } X'=X[X(\tuple x)/\tuple x\,].
\] 
Note that the team $X'$ is not necessarily the same team as $X$, although we have $\dom(X')=\dom(X)$ and even $X'(\tuple x)=X(\tuple x)$. Consider the following example.
\begin{example}
Let $X\!=\!\{s_1,s_2\}$ where $s_1(v_1)\!=\!a$, $s_2(v_1)=b$ and $a\neq b$. Now
\begin{align*}
	X[X(v_1)/v_1] = X[\{a,b\}/v_1] &= \{s_1[a/v_1],s_1[b/v_1],s_2[a/v_1],s_2[b/v_1]\} \\
	&= \{s_1,s_2,s_1[b/v_1],s_2[a/v_1]\}.
\end{align*}
Thus, supposing that there is at least one variable $x\in\dom(X)$ for which $x\neq v_1$ and $s_1(x)\neq s_2(x)$, we have $X[X(v_1)/v_1]\neq X$.
\end{example}
We say that the quantifier $(\Ae \tuple x\inc\tuple x)$ \emph{unifies} the values of the tuple $\tuple x$ in a team. After executing this operation for a team $X$, then each assignment $s\in X\upharpoonright(\dom(X)\setminus\vr(\tuple x))$ ``carries'' the information on the whole relation $X(\tuple x)$. 
This also makes the values of the tuple $\tuple x$ \emph{independent} of all the other variables in $\dom(X)$. We can formulate this latter statement  in \emph{independence logic} (\cite{Gradel12}) as follows, when $\vr(\tuple x)\cap\vr(\tuple v)=\emptyset$.
\[
	\mathcal{M}\true_X(\Ae \tuple x\inc\tuple x)\,\tuple x\bot\tuple v 
	\quad \text{holds in any team $X$ for which $\vr(\tuple v)\subseteq\dom(X)$}.
\]
We introduce the following operator as an abbreviation.
\begin{definition}\label{def: Unifier}
Let $\tuple x_1,\dots,\tuple x_n$ be tuples of variables and $\varphi\in\EXCset$. \emph{The unifier of the values of $\tuple x_1,\dots,\tuple x_n$}, denoted by $\uniform(\tuple x_1,\dots,\tuple x_n)$, is defined as:
\[
	\uniform(\tuple x_1,\dots,\tuple x_n)\,\varphi :=
	\;(\Ae\tuple x_1\inc\tuple x_1)\dots(\Ae\tuple x_n\inc\tuple x_n)\,\varphi.
\]
\end{definition}
Note that tuples $\tuple x_1,\dots,\tuple x_n$  above do not necessarily need to be of the same length. Moreover, they do not have to be disjoint, i.e the same variable may occur in more than one tuple.
Also note that if the longest of the tuples $\tuple x_i$ is a $k$-tuple, then this operator can be defined in $\EXC[k]$ (and in $\INEX[k]$).

\begin{example}\label{ex: Unifier}
We have $\uniform(\tuple x_1,\dots,\tuple x_n)\varphi\equiv\uniform(\tuple x_1)\dots\uniform(\tuple x_n)\,\varphi$ by the definition of the unifier. But one should note that usually
\[
	\uniform(\tuple x_1\!\dots\tuple x_n)\,\varphi\not\equiv\uniform(\tuple x_1,\,\dots\,,\tuple x_n)\,\varphi. 
\]
To see this, consider $X$ s.t. $v_1,v_2\in\dom(X)$ and let $X_1:=X[X(v_1v_2)/v_1v_2]$ and $X_2:=X[X(v_1)/v_1,X(v_2)/v_2]$. Now we have $X_1(v_1v_2)=X(v_1v_2)$ but $X_2(v_1v_2)=X(v_1)\times X(v_2)$. It is easy to see that $X_1$ and $X_2$ are identical only if $X(v_1v_2)=X(v_1)\times X(v_2)$.

We also note that the ordering of variables \emph{within the tuples} does not effect the truth condition of the unifier. Hence for example $\uniform(x_1x_2)\varphi\equiv\uniform(x_2x_1)\varphi$ for any formula $\varphi$. Also clearly the repetitions of variables within the tuples do not matter, and thus for example $\uniform(x_1x_1)\varphi\equiv\uniform(x_1)\varphi$ for any $\varphi$.
\end{example}

The truth condition for the unifier is given by the following proposition whose truth is obvious.

\begin{proposition}
Let $\tuple x_1,\dots,\tuple x_n$ be tuples of variables and $\varphi\in\EXCset$. Now
\[
	\mathcal{M}\true_X \uniform(\tuple x_1,\dots,\tuple x_n)\,\varphi \; \text{ iff } \;
	\mathcal{M}\true_{X[X(\tuple x_1)/\tuple x_1,\dots,X(\tuple x_n)/\tuple x_n]}\varphi.
\]
\end{proposition}

When looking at Definition~\ref{def: Unifier}, it seems that if the tuples $\tuple x_1,\dots,\tuple x_n$ are not disjoint, then their ordering affects the truth condition of $\uniform(\tuple x_1,\dots,\tuple x_n)$. 
%
%
However, we can show that it is actually irrelevant in which order we unify the tuples. 
We first prove the following proposition which shows what happens when we unify two tuples which have some shared variables. The result shows that we obtain then the same result as when unifying separately the part that is overlapping and the disjoint parts.

\begin{proposition}\label{the: Unifying overlapping tuples}
Let $x_1,\dots,x_k$ be distinct variables and $1 < m \leq n < k$. Then the following equivalence holds.
\begin{align*}
	&\mathcal{M}\true_X \uniform(x_1\dots x_n)\uniform(x_m\dots x_k)\,\varphi \\
	&\qquad\text{ iff } \mathcal{M}\true_X \uniform(x_1\dots x_{m-1})\uniform(x_m\dots x_n)\uniform(x_{n+1}\dots x_k)\,\varphi.
\end{align*}
\end{proposition}

\begin{proof}
We define the following teams:
\begin{align*}
	X_1 &:= X[X(x_1\dots x_n)/x_1\dots x_n] \\
	X_2 &:= X_1[X_1(x_m\dots x_k)/x_m\dots x_k] \\
	X_3 &:= X[X(x_1\dots x_{m-1})/x_1\dots x_{m-1}, X(x_m\dots x_n)/x_m\dots x_n, \\[-0,1cm]
		&\hspace{6cm}X(x_{n+1}\dots x_k)/x_{n+1}\dots x_k].
\end{align*}
By the semantics of universal inclusion quantifier, it is sufficient to show that $X_2=X_3$.

For the sake of showing that $X_3\subseteq X_2$, let $s\in X_3$. Hence there is $r\in X$ and  $a_1\dots a_{m-1}\in X(x_1\dots x_{m-1})$, $a_m\dots a_n\in X(x_m\dots x_n)$ and  $a_{n+1}\dots a_k\in X(x_{n+1}\dots x_k)$ such that 
\[
	s=r[a_1\dots a_{m-1}/x_1\dots x_{m-1},\,a_m\dots a_n/x_m\dots x_n,\,a_{n+1}\dots a_k/x_{n+1}\dots x_k].
\]
Moreover, there are $r_1,r_2,r_3\in X$ such that $r_1(x_1,\dots x_{m-1})=a_1\dots a_{m-1}$, $r_2(x_m,\dots x_n)=a_m\dots a_n$ and $r_3(x_{n+1},\dots x_k)=a_{n+1}\dots a_k$.

Let now $r':=r[r_1(x_1\dots x_n)/x_1\dots x_n]$ and $r'':=r_3[r_2(x_1\dots x_n)/x_1\dots x_n]$, whence $r',r''\in X_1$. It is quite easy to see that $s=r'[r''(x_m\dots x_k)/x_m\dots x_k]$ and therefore $s\in X_2$.

For the sake of showing that $X_2\subseteq X_3$, let $s\in X_2$. Now there is $r_1\in X_1$ and $a_m\dots a_k\in X_1(x_m\dots x_k)$ such that $s=r_1[a_m\dots a_k/x_m\dots x_k]$. Moreover, there is $r_2\in X$ and $a_1'\dots a_n'\in X(x_1\dots x_n)$ such that $r_1=r_2[a_1'\dots a_n'/x_1\dots x_n]$. Since $a_m\dots a_k\in X_1(x_m\dots x_k)$, there is $r_1'\in X_1$ s.t. $r_1'(x_m\dots x_k)=a_m\dots a_k$. Furthermore there is $r_2'\in X$ and a tuple $a_1''\dots a_n''\in X(x_1\dots x_n)$ such that $r_1'=r_2'[a_1''\dots a_n''/x_1\dots x_n]$. Let $a_i''':=r_2'(x_i)$ for each $i$ s.t. $n<i\leq k$.
Now
\[
	s=r_2[a_1'\dots a_{m-1}'/x_1\dots x_{m-1},\,a_m''\dots a_n''/x_m\dots x_n,\,a_{n+1}'''\dots a_k'''/x_{n+1}\dots x_k].
\]
Because we have $a_1'\dots a_{m-1}'\in X(x_1\dots x_{m-1})$, $a_m''\dots a_n''\in X(x_m\dots x_n)$ and $a_{n+1}'''\dots a_k'''\in X(x_{n+1}\dots x_k)$, it holds that $s\in X_3$.
\end{proof}

Consider now some arbitrary tuples $\vec x_1$ and $\vec x_2$ of variables. Recalling the observations in Example~\ref{ex: Unifier}, we can first rearrange these tuples in such a way that they match the assumptions of Proposition~\ref{the: Unifying overlapping tuples} and then unify the overlapping and disjoint parts separately. 
When unifying several tuples that are not disjoint, we can then show by a straightforward induction that one always obtains the same result by separately unifying some disjoint tuples. 
It then follows that the ordering of tuples $\tuple x_1,\dots,\tuple x_n$ in $\uniform(\tuple x_1,\dots,\tuple x_n)$ indeed does not affect its truth condition.
For example we have
\[
	\uniform(v_1 v_2,\,v_2 v_3)\,\varphi 
	\,\equiv\, \uniform(v_1,v_2,v_3)\,\varphi 
	\,\equiv\, \uniform(v_3 v_2,\,v_2 v_1)\,\varphi
	\,\equiv\, \uniform(v_2 v_3,\,v_1 v_2)\,\varphi.
\]

For the main results of this paper we only use the unifier for disjoint tuples and in this case the result of Proposition~\ref{the: Unifying overlapping tuples} is not needed. However, we think that this is an interesting property which could be useful when using unifier in some other context.


\subsection{New operators that can be defined with unifier}

Unifier can be used in combination with other logical operators to form new useful tools for the framework of team semantics. We will introduce here two such operators. The definitions for the following operators are given more generally for INEX, but they can be defined in the same way for EXC as well.
\begin{definition}
Let $\tuple x$ be a $k$-tuple of variables and $\varphi\in\INEXset$. \emph{Unified existential quantifier} $\Eu$ is defined as:
\[
	\Eu \tuple x\,\varphi\, :=\;\Ee\tuple x\,\uniform(\tuple x)\,\varphi.
\]
\end{definition}
\begin{proposition}\label{the: Unified existential quantifier}
Let $\tuple x$ be a $k$-tuple and $\varphi\in\INEXset$. Now
\[
	\mathcal{M}\true_{X} \Eu\tuple x\,\varphi	
	\,\text{ iff\, there exists a nonempty set } A\subseteq M^k \text{ s.t. }\mathcal{M}\true_{X[A/\tuple x\,]}\varphi.
\]
\end{proposition}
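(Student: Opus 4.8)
The plan is to unfold the definition of $\Eu\tuple x\,\varphi$ directly and reduce it to the known truth conditions for $\Ee\tuple x$ and for the unifier. First I would recall that by definition $\Eu\tuple x\,\varphi := \Ee\tuple x\,\uniform(\tuple x)\,\varphi$, so that by the (strict) semantics for the existential quantifier over tuples we have $\mathcal{M}\true_X\Eu\tuple x\,\varphi$ iff there is some $\mathcal{F}:X\to M^k$ (or, in the non-strict version, $\mathcal{F}:X\to\mathcal{P}(M^k)\setminus\{\emptyset\}$) such that $\mathcal{M}\true_{X[\mathcal{F}/\tuple x\,]}\uniform(\tuple x)\,\varphi$. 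Then I would apply the truth condition for the unifier of a single tuple: $\mathcal{M}\true_Y\uniform(\tuple x)\,\varphi$ iff $\mathcal{M}\true_{Y[Y(\tuple x)/\tuple x\,]}\varphi$. Putting $Y = X[\mathcal{F}/\tuple x\,]$, this says $\mathcal{M}\true_X\Eu\tuple x\,\varphi$ iff there is $\mathcal{F}$ with $\mathcal{M}\true_{X'}\varphi$, where $X' = X[\mathcal{F}/\tuple x\,]\bigl[\,X[\mathcal{F}/\tuple x\,](\tuple x)/\tuple x\,\bigr]$.

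The key observation is then that $X' = X[A/\tuple x\,]$ where $A := X[\mathcal{F}/\tuple x\,](\tuple x)$ is precisely the (necessarily nonempty) set of values that $\tuple x$ takes in the intermediate team. Indeed, re-assigning $\tuple x$ to range over a fixed set $A$ after it has already been modified simply overwrites the previous values, so $X[\mathcal{F}/\tuple x\,][A/\tuple x\,] = X[A/\tuple x\,]$ as long as $\tuple x$ consists of distinct variables (which we may assume); this uses locality of $\EXC$/$\INEX$ to handle any harmless bookkeeping about variables of $\tuple x$ occurring among themselves. This gives the left-to-right direction with the witnessing set $A$. For the converse, given a nonempty $A\subseteq M^k$ with $\mathcal{M}\true_{X[A/\tuple x\,]}\varphi$, I would simply take $\mathcal{F}$ to be the constant function $s\mapsto A$ (or, in strict semantics, pick any surjection-like choice realising all of $A$ — more carefully, in strict semantics one notes $X[A/\tuple x\,]$ can itself be obtained as $X'[X'(\tuple x)/\tuple x\,]$ for $X' = X[F/\tuple x\,]$ with any $F$ whose range is contained in $A$, since the unifier then restores all of $A$ provided $A\subseteq X(\tuple t)$-type considerations are met — but using the non-strict semantics of Definition~\ref{def: Team semantics} this is immediate). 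Then $X[\mathcal{F}/\tuple x\,](\tuple x) = A$, so $X[\mathcal{F}/\tuple x\,][A/\tuple x\,] = X[A/\tuple x\,]$ and the unifier step lands us in exactly the team where $\varphi$ is assumed true.

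The main obstacle I anticipate is the mild mismatch between strict and non-strict semantics for the existential quantifier: in strict semantics a single witness function cannot by itself produce the full set $A$ unless $|X|$ is large enough, so the clean statement "there exists a nonempty $A$" genuinely relies on the unifier to blow the values back up to all of $A$. The careful point is therefore to verify that for the constant choice the composition $X[\mathcal{F}/\tuple x\,][\text{its own }\tuple x\text{-values}/\tuple x\,]$ recovers $X[A/\tuple x\,]$ regardless of the size of $X$ — which it does, since after $\uniform(\tuple x)$ every assignment in the team carries every tuple of $A$. Once this identity $X[\mathcal{F}/\tuple x\,]\bigl[X[\mathcal{F}/\tuple x\,](\tuple x)/\tuple x\,\bigr] = X[X[\mathcal{F}/\tuple x\,](\tuple x)/\tuple x\,]$ is checked, both directions are routine, and the proposition follows.
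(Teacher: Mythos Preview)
Your proposal is correct and follows essentially the same approach as the paper: unfold $\Eu\tuple x\,\varphi = \Ee\tuple x\,\uniform(\tuple x)\,\varphi$, use the key identity $X[\mathcal{F}/\tuple x\,]\bigl[X[\mathcal{F}/\tuple x\,](\tuple x)/\tuple x\,\bigr] = X\bigl[X[\mathcal{F}/\tuple x\,](\tuple x)/\tuple x\,\bigr]$ for the forward direction, and take the constant choice $\mathcal{F}:s\mapsto A$ for the converse. Your digression about strict semantics is unnecessary here (the proposition is stated for $\INEX$, which uses the lax semantics of Definition~\ref{def: Team semantics}) and slightly off --- a strict $F$ with range merely contained in $A$ would \emph{not} let the unifier recover all of $A$ --- but this does not affect your main argument, which is the paper's argument.
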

\begin{proof}
If $X$ were the empty team, then the claim would hold trivially. Thus we may assume that $X\neq\emptyset$.

Suppose first that we have $\mathcal{M}\true_X \Eu\tuple x\,\varphi$, i.e. $\mathcal{M}\true_X \Ee\tuple x\uniform(\tuple x)\,\varphi$. Therefore there exists a function $\mathcal{F}:X\rightarrow\mathcal{P}(M^k)\setminus\{\emptyset\}$ s.t. $\mathcal{M}\true_{X'}\uniform(\tuple x)\,\varphi$, where $X'=X[\mathcal{F}/\tuple x\,]$. Then $\mathcal{M}\true_{X'[X'(\tuple x)/\tuple x\,]}\varphi$. Since $X[X'(\tuple x)/\tuple x\,]=X'[X'(\tuple x)/\tuple x]$ and $X'(\tuple x)\neq\emptyset$, we can choose $A:=X'(\tuple x)$.

Suppose then that there exists nonempty $A\subseteq M^k$ s.t. $\mathcal{M}\true_{X[A/\tuple x\,]}\varphi$. We define the function
\[
	\mathcal{F}:X\rightarrow\mathcal{P}(M^k)\setminus\{\emptyset\}, \quad s\mapsto A \quad \text{for all } s\in X.
\]
Let $X':=X[\mathcal{F}/\tuple x\,]$, whence $X'(\tuple x)\!=\!A$. Now $X'[X'(\tuple x)/\tuple x\,]\!=\!X'[A/\tuple x\,]\!=\!X[A/\tuple x\,]$. Hence $\mathcal{M}\true_{X'[X'(\tuple x)/\tuple x\,]}\varphi$, and thus $\mathcal{M}\true_{X'}\uniform(\tuple x)\,\varphi$. Therefore $\mathcal{M}\true_X \Ee\tuple x\uniform(\tuple x)\,\varphi$, i.e. $\mathcal{M}\true_X \Eu\tuple x\,\varphi$.
\end{proof}
If we use this quantifier in EXC (or in any other downwards closed logic), the following equivalence holds:
\[
	\mathcal{M}\true_X \Eu\tuple x\,\varphi\;
	\text{ iff\, there exists } \tuple a\in M^k \text{ s.t. }\mathcal{M}\true_{X[\{\tuple a\}/\tuple x\,]}\varphi.
\]
For single variables this truth condition is equivalent with the semantics of the quantifier $\Ee^1$ that was introduced in~\cite{Kontinen09}. Note that in dependence logic this quantifier can be defined simply as $\Ee^1 x \,\varphi\,:=\, \Ee x(\dep(x)\wedge\varphi)$. 

The next operator will play a very important role in our translation from ESO[$k$] to EXC[$k$] in the next section.

\begin{definition}\label{def: Unifying disjunction}
Let $\varphi,\psi\in\INEXset$ and let $\tuple x_1,\dots,\tuple x_n$ be $k$-tuples of disjoint variables. \emph{Unifying disjunction for tuples $\tuple x_1,\dots,\tuple x_n$} is defined as:
\begin{align*}
	\varphi\!\!\underset{\scriptscriptstyle\tuple x_1,\dots,\tuple x_n}{\;\vee^\text{\bf U}}\!\!\psi \,:=\; 
	&\Ee y_1\Ee y_2\uniform(\tuple x_1,\dots,\tuple x_n)
	\bigl((y_1\!=\!y_2\wedge\varphi)\,\vee\,(y_1\!\neq\!y_2\wedge\psi)\bigr), \\[-0,2cm]
	&\hspace{4cm}\text{ where $y_1,y_2$ are fresh variables}.
\end{align*}
\end{definition}
\begin{proposition}\label{the: Unifying disjunction}
Let $\varphi,\psi\in\INEXset$ and let $\tuple x_1,\dots,\tuple x_n$ be $k$-tuples of variables. Now for all $L$-models $\mathcal{M}$ with at least two elements we have
\begin{align*}
	&\mathcal{M}\true_X\,\varphi\!\!\underset{\scriptscriptstyle\tuple x_1,\dots,\tuple x_n}{\;\vee^\text{\bf U}}\!\!\psi
	\text{ iff }\text{there exist } Y,Y'\subseteq X \text{ s.t. } Y\cup Y' = X, \\[-0,05cm]
	&\qquad\qquad\mathcal{M}\true_{Y[X(\tuple x_1)/\tuple x_1,\dots,X(\tuple x_n)/\tuple x_n]}\varphi
	\;\text{ and }\; \mathcal{M}\true_{Y'[X(\tuple x_1)/\tuple x_1,\dots,X(\tuple x_n)/\tuple x_n]}\psi.
\end{align*}
\end{proposition}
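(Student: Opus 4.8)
The plan is to unfold Definition~\ref{def: Unifying disjunction} and peel off the operators one at a time, reducing the claimed equivalence to a statement about the team obtained from $X$ by the simultaneous substitution $\sigma := [X(\tuple x_1)/\tuple x_1,\dots,X(\tuple x_n)/\tuple x_n]$; I write $V[\sigma]$ for the result of applying $\sigma$ to a team $V$ whose domain contains all variables occurring in $\tuple x_1,\dots,\tuple x_n$. If $X=\emptyset$ both sides of the equivalence hold trivially, so assume $X\neq\emptyset$. By the team semantics of $\Ee y_1\Ee y_2$, the left-hand side of the equivalence holds iff there is a function $\mathcal{F}\colon X\to\mathcal{P}(M^2)\setminus\{\emptyset\}$ with $\mathcal{M}\true_{Z}\uniform(\tuple x_1,\dots,\tuple x_n)\bigl((y_1=y_2\wedge\varphi)\vee(y_1\neq y_2\wedge\psi)\bigr)$, where $Z := X[\mathcal{F}/y_1y_2]$. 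Since $y_1,y_2$ are fresh they do not occur in any $\tuple x_i$, so $Z(\tuple x_i)=X(\tuple x_i)$ for every $i$; hence by the truth condition for $\uniform$ on tuples of distinct variables (Subsection~\ref{ssec: Uniformization}) this is equivalent to $\mathcal{M}\true_{Z'}\bigl((y_1=y_2\wedge\varphi)\vee(y_1\neq y_2\wedge\psi)\bigr)$, where $Z':=Z[\sigma]$.

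The second step is to analyse the inner disjunction over $Z'$, exploiting that it is \emph{rigid}. Put $Z_0':=\{r\in Z'\mid r(y_1)=r(y_2)\}$ and $Z_1':=Z'\setminus Z_0'$. If $Z'=W\cup W'$ with $\mathcal{M}\true_{W}(y_1=y_2\wedge\varphi)$ and $\mathcal{M}\true_{W'}(y_1\neq y_2\wedge\psi)$, then $W\subseteq Z_0'$ and $W'\subseteq Z_1'$; since $Z_0'$ and $Z_1'$ partition $Z'$ and $W\cup W'=Z'$, this forces $W=Z_0'$ and $W'=Z_1'$. Conversely, this particular decomposition always witnesses the disjunction as soon as $\mathcal{M}\true_{Z_0'}\varphi$ and $\mathcal{M}\true_{Z_1'}\psi$, because the literals $y_1=y_2$ and $y_1\neq y_2$ hold on $Z_0'$ and $Z_1'$ by construction. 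Thus the inner disjunction over $Z'$ is equivalent to the conjunction of $\mathcal{M}\true_{Z_0'}\varphi$ and $\mathcal{M}\true_{Z_1'}\psi$.

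It remains to transport these two statements back to the team $X$. Writing $Z'=Z[\sigma]$ with $Z=X[\mathcal{F}/y_1y_2]$, and using that $[\mathcal{F}/y_1y_2]$ and $\sigma$ act on disjoint sets of variables, one checks that $Z_0'=(\{z\in Z\mid z(y_1)=z(y_2)\})[\sigma]$ and that restricting this team to $\dom(X)$ yields $Y[\sigma]$, where $Y:=\{s\in X\mid\mathcal{F}(s)\text{ contains a pair with equal components}\}$. Symmetrically $Z_1'\!\upharpoonright\!\dom(X)=Y'[\sigma]$ with $Y':=\{s\in X\mid\mathcal{F}(s)\text{ contains a pair with distinct components}\}$, and $Y\cup Y'=X$ since each $\mathcal{F}(s)$ is nonempty. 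As $\INEX$ is local and $y_1,y_2\notin\fr(\varphi)\cup\fr(\psi)$, we get $\mathcal{M}\true_{Z_0'}\varphi$ iff $\mathcal{M}\true_{Y[\sigma]}\varphi$, and likewise for $\psi$; combined with the second step, this proves the implication from left to right. For the converse, given $Y,Y'\subseteq X$ with $Y\cup Y'=X$, I would fix two distinct elements $a,b\in M$ --- the only place the hypothesis $\abs{M}\geq 2$ is used --- and let $\mathcal{F}(s)$ contain $(a,a)$ exactly when $s\in Y$ and $(a,b)$ exactly when $s\in Y'$; this $\mathcal{F}$ is nonempty-valued throughout $X$, its associated sets $Y,Y'$ (defined as above) are the prescribed ones, and running the identifications of this paragraph in reverse delivers the left-hand side.

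The routine bookkeeping --- that $[\mathcal{F}/y_1y_2]$, $\sigma$ and restriction to $\dom(X)$ commute appropriately because the variables involved are disjoint, together with the minor well-definedness checks --- I would not belabour. The point that needs the most care, and which I take to be the heart of the matter, is the rigidity in the second step: the two literals pin the decomposition of $Z'$ down to the unique pair $(Z_0',Z_1')$, so that the freedom one normally has in splitting a team for a disjunction is here encoded entirely in the choice of $\mathcal{F}$; and dually, when $\abs{M}\geq 2$ every prescribed cover $Y\cup Y'=X$ is realised by a suitable $\mathcal{F}$. Fitting these two observations together is what makes the equivalence go through.
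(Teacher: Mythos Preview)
Your argument is correct and follows essentially the same route as the paper: unfold the quantifiers and the unifier, use locality to pass between the extended teams and their restrictions to $\dom(X)$, and in the converse direction pick two distinct elements $a,b$ to encode membership in $Y$ and $Y'$ via the values given to $y_1,y_2$. The one presentational difference is that you isolate the \emph{rigidity} of the inner split explicitly (the literals $y_1{=}y_2$ and $y_1{\neq}y_2$ force the decomposition $Z'=Z_0'\cup Z_1'$), whereas the paper works directly with an arbitrary witnessing pair $Z,Z'$ and only implicitly uses that this pair is uniquely determined; your formulation is slightly cleaner but the content is the same.
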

The intuitive idea about the proof of Proposition~\ref{the: Unifying disjunction} is that before splitting the team, we must ``announce'' beforehand for each assignment if it will be placed on the left hand side or on the right hand side (or on both). This is done by giving the same or different values for the variables $y_1$ and $y_2$. Because the unification is done \emph{after} this announcement, but before the actual splitting of the team, all the values will be unified correctly on both sides.

\begin{proof} (Proposition~\ref{the: Unifying disjunction})
Because INEX is local, we may assume that $y_1,y_2\notin\dom(X)$.
Suppose first that $\mathcal{M}\true_X\,\varphi\!\!\underset{\scriptscriptstyle\tuple x_1,\dots,\tuple x_n}{\;\vee^\text{\bf U}}\!\!\psi$, i.e. 
\[
	\mathcal{M}\true_{X}\Ee y_1\Ee y_2\uniform(\tuple x_1,\dots,\tuple x_n)
	\bigl((y_1\!=\!y_2\wedge\psi)\,\vee\,(y_1\!\neq\!y_2\wedge\theta)\bigr). 
\]
Thus there exist $F_1:X\rightarrow\mathcal{P}(M)\setminus\{\emptyset\}$ and $F_2:X[F_1/y_1]\rightarrow\mathcal{P}(M)\setminus\{\emptyset\}$ s.t.
\[
	\mathcal{M}\true_{X_1}\uniform(\tuple x_1,\dots,\tuple x_n)
	\bigl((y_1\!=\!y_2\wedge\varphi)\,\vee\,(y_1\!\neq\!y_2\wedge\psi)\bigr),
\]	
where $X_1=X[F_1/y_1,\,F_2/y_2]$.
Therefore $\mathcal{M}\true_{X_2}(y_1\!=\!y_2\wedge\varphi)\,\vee\,(y_1\!\neq\!y_2\wedge\psi)$, where $X_2=X_1[X_1(\tuple x_1)/\tuple x_1,\dots,X_1(\tuple x_n)/\tuple x_n\,]$. Thus there exist $Z,Z'\subseteq X_2$ s.t. $Z\cup Z'=X_2$,  $\mathcal{M}\true_{Z}y_1\!=\!y_2\wedge\varphi$ and $\mathcal{M}\true_{Z'}y_1\!\neq\!y_2\wedge\psi$. Let 
\[
	\begin{cases}
		Y:=\,\{s\in X\mid \text{There exists $a\in M$ s.t. $s[a/y_1,a/y_2]\in X_1$}\} \\
		Y':=\,\{s\in X\mid \text{There exist $a,b\in M$ s.t. $a\!\neq\!b$ and $s[a/y_1,b/y_2]\in X_1$}\}.
	\end{cases}
\]
It is easy to see that $Y\cup Y'=X$. Also note that since $X(\tuple x_i)=X_1(\tuple x_i)$ for each $i\leq n$, it holds that $X_2=X_1[X(\tuple x_1)/\tuple x_1,\dots,X(\tuple x_n)/x_n\,]$. We will show that $Y[X(\tuple x_1)/\tuple x_1,\dots,X(\tuple x_n)/\tuple x_n]=Z\upharpoonright\dom(X)$.

Let $r\in Y[X(\tuple x_1)/\tuple x_1,\dots,X(\tuple x_n)/\tuple x_n]$. Now there exists $s\in Y$ and tuples $\tuple a_1\in X(\tuple x_1),\dots,\tuple a_n\in X(\tuple x_n)$ s.t. $r=s[\tuple a_1/\tuple x_1,\dots,\tuple a_n/\tuple x_n\,]$. Since $s\in Y$, there exists  $a\in M$ s.t. $q:=s[a/y_1,a/y_2]\in X_1$. Let $q':=q[\tuple a_1/\tuple x_1,\dots,\tuple a_n/\tuple x_n\,]$, whence $q'\in X_2$. Since $q'(y_1)=a=q'(y_2)$ and $\mathcal{M}\true_{Z'}y_1\!\neq\!y_2$ we have $q'\notin Z'$, and thus it must be that $q'\in Z$. But since now $r=q'\upharpoonright\dom(X)\,\in\,Z\upharpoonright\dom(X)$, we have shown that $Y[X(\tuple x_1)/\tuple x_1,\dots,X(\tuple x_n)/\tuple x_n]\subseteq Z\upharpoonright\dom(X)$.

Let then $r^*\in Z\upharpoonright\dom(X)$. Now there exists $r\in Z$ s.t. $r^*=r\upharpoonright\dom(X)$. Because $\mathcal{M}\true_Z y_1\!=\!y_2$ it must be that $r(y_1)=r(y_2)$. Since $r\in Z\subseteq X_2$ there exists $q\in X_1$ and $\tuple a_1\in X(\tuple x_1),\dots,\tuple a_n\in X(\tuple x_n)$ s.t. $r=q[\tuple a_1/\tuple x_1,\dots,\tuple a_n/\tuple x_n\,]$. Let $s:=q\upharpoonright\dom(X)$. Since $q(y_1)=r(y_1)=r(y_2)=q(y_2)$ and $s\in X$, by the definition of $Y$ we have $s\in Y$. Let $s':=s[\tuple a_1/\tuple x_1,\dots,\tuple a_n/\tuple x_n\,]$, whence $s'\in Y[X(\tuple x_1)/\tuple x_1,\dots,X(\tuple x_n)/\tuple x_n]$. But now it must also be that $s' = r^*$ and thus $Z\upharpoonright\dom(X)\subseteq Y[X(\tuple x_1)/\tuple x_1,\dots,X(\tuple x_n)/\tuple x_n]$.

We have shown that $Y[X(\tuple x_1)/\tuple x_1,\dots,X(\tuple x_n)/\tuple x_n]=Z\upharpoonright\dom(X)$. Since $\mathcal{M}\true_Z\varphi$, by locality $\mathcal{M}\true_{Z\upharpoonright\dom(X)}\varphi$ and thus $\mathcal{M}\true_{Y[X(\tuple x_1)/\tuple x_1,\dots,X(\tuple x_n)/\tuple x_n]}\varphi$. With a similar argumentation $Y'[X(\tuple x_1)/\tuple x_1,\dots,X(\tuple x_n)/\tuple x_n]=Z'\upharpoonright\dom(X)$ and consequently $\mathcal{M}\true_{Y'[X(\tuple x_1)/\tuple x_1,\dots,X(\tuple x_n)/\tuple x_n]}\psi$.

\medskip

Suppose then that there are subteams $Y,Y'\subseteq X$ such that $Y\cup Y' = X$, $\mathcal{M}\true_{Y[X(\tuple x_1)/\tuple x_1,\dots,X(\tuple x_n)/\tuple x_n]}\varphi$ and $\mathcal{M}\true_{Y'[X(\tuple x_1)/\tuple x_1,\dots,X(\tuple x_n)/\tuple x_n]}\psi$. Since $\abs{M}\geq 2$, there exist $a,b\in M$ s.t. $a\neq b$. We define the following functions:
\begin{align*}
	&F_1:X\rightarrow\mathcal{P}(M)\setminus\{\emptyset\}, \quad
	\begin{cases}
		s\mapsto\{a\} \;\quad\text{if } s\in Y\setminus Y' \\
		s\mapsto\{b\} \;\,\quad\text{if } s\in Y'\setminus Y\\
		s\mapsto\{a,b\} \;\;\text{if } s\in Y\cap Y'
	\end{cases} \\[0,2cm]
	&F_2:X[F_1/y_1]\rightarrow\mathcal{P}(M)\setminus\{\emptyset\}, \quad s\mapsto\{a\}.
\end{align*}
We define teams $X_1:=X[F_1/y_1,\,F_2/y_2]$, $X_2:=X_1[X(\tuple x_1)/\tuple x_1,\dots,X(\tuple x_n)/x_n\,]$, $Z:=\{s\in X_2\mid s(y_1)=s(y_2)\}$ and $Z':=\{s\in X_2\mid s(y_1)\neq s(y_2)\}$. Clearly now $Z\cup Z'=X_2$, $\mathcal{M}\true_{Z}y_1\!=\!y_2$ and $\mathcal{M}\true_{Z'}y_1\!\neq\!y_2$. We will then show that $Y[X(\tuple x_1)/\tuple x_1,\dots,X(\tuple x_n)/\tuple x_n]=Z\upharpoonright\dom(X)$.

Let $r\in Y[X(\tuple x_1)/\tuple x_1,\dots,X(\tuple x_n)/\tuple x_n]$. Now there is $s\in Y$ and tuples $\tuple a_1\in X(\tuple x_1),\dots,\tuple a_n\in X(\tuple x_n)$ s.t. $r=s[\tuple a_1/\tuple x_1,\dots,\tuple a_n/\tuple x_n\,]$. Since $s\in Y$, by the definition of $F_1$, we have $s[a/y_1]\in X[F_1/y_1]$. Let then $q:=s[a/y_1,a/y_2]$ and $q':=q[\tuple a_1/\tuple x_1,\dots,\tuple a_n/\tuple x_n\,]$, whence $q\in X_1$ and $q'\in X_2$. Since $q'(y_1)=q'(y_2)$, by the definition of $Z$, we have $q'\in Z$. But now $r=q'\upharpoonright\dom(X)\,\in\,Z\upharpoonright\dom(X)$, and thus we have shown that $Y[X(\tuple x_1)/\tuple x_1,\dots,X(\tuple x_n)/\tuple x_n]\subseteq Z\upharpoonright\dom(X)$.

Let then $r^*\in Z\upharpoonright\dom(X)$. Now there is $r\in Z$ s.t. $r^*=r\upharpoonright\dom(X)$. By the definition of $Z$ we have $r(y_1)=r(y_2)$. Since $r\in Z\subseteq X_2$, there is $q\in X_1$ and tuples $\tuple a_1\in X(\tuple x_1),\dots,\tuple a_n\in X(\tuple x_n)$ such that $r=q[\tuple a_1/\tuple x_1,\dots,\tuple a_n/\tuple x_n\,]$. Let $s:=q\upharpoonright\dom(X)$. Since $q(y_1)=q(y_2)$, by the definition of $F_1$, we must have $s\in Y$. Let $s':=s[\tuple a_1/\tuple x_1,\dots,\tuple a_n/\tuple x_n\,]$, whence $s'\in Y[X(\tuple x_1)/\tuple x_1,\dots,X(\tuple x_n)/\tuple x_n]$. But now $s' = r^*$ and thus $Z\upharpoonright\dom(X)\subseteq Y[X(\tuple x_1)/\tuple x_1,\dots,X(\tuple x_n)/\tuple x_n]$.

We have shown that $Y[X(\tuple x_1)/\tuple x_1,\dots,X(\tuple x_n)/\tuple x_n]=Z\upharpoonright\dom(X)$. Thus by the initial assumption we have $\mathcal{M}\true_{Z\upharpoonright\dom(X)}\varphi$ and thus by locality $\mathcal{M}\true_{Z}\varphi$. With similar argumentation we can show that $Y'[X(\tuple x_1)/\tuple x_1,\dots,X(\tuple x_n)/\tuple x_n]=Z'\upharpoonright\dom(X)$ and consequently $\mathcal{M}\true_{Z'}\psi$. 

Therefore it holds that $\mathcal{M}\true_{Z}y_1\!=\!y_2\wedge\varphi$ and $\mathcal{M}\true_{Z'}y_1\!\neq\!y_2\wedge\psi$. Furthermore we can conclude that $\mathcal{M}\true_{X}\Ee y_1\Ee y_2\uniform(\tuple x_1,\dots,\tuple x_n)\bigl((y_1\!=\!y_2\wedge\psi)\,\vee\,(y_1\!\neq\!y_2\wedge\theta)\bigr)$, i.e. $\mathcal{M}\true_X\,\varphi\!\!\underset{\scriptscriptstyle\tuple x_1,\dots,\tuple x_n}{\;\vee^\text{\bf U}}\!\!\psi$.
\end{proof}

\begin{remark}
We could easily modify the definition of unifying disjunction to make it work properly also in the case of single element models. Let
\[
	\varphi\!\!\underset{\scriptscriptstyle\tuple x_1,\dots,\tuple x_n}{\;\vee^\text{\bf U'}}\!\!\psi \; := \;
	\bigl(\Ae z_1\Ae z_2\,(z_1\!=\!z_2)\wedge (\varphi\vee\psi)\bigr) \; \sqcup \;
	\varphi\!\!\underset{\scriptscriptstyle\tuple x_1,\dots,\tuple x_n}{\;\vee^\text{\bf U}}\!\!\psi.
\]
It is easy to see that the truth condition given by Proposition~\ref{the: Unifying disjunction} holds for the operator above even without the extra assumption $\abs{M}>1$, as unifying disjunction becomes normal disjunction in the case of single element models. However, in this paper we are mainly using this operator as a tool in our main translation (Theorem~\ref{the: Translation from ESO to EXC}) where this simpler form suffices for our needs.
\end{remark}


\section{The expressive power of $\EXC[k]$}\label{sec: Expressive power}

In this section we analyze the expressive power of EXC[$k$] by comparing it with $k$-ary dependence logic and $k$-ary existential second order logic. Finally we discuss the correspondence between EXC[$k$] and INC[$k$].

Since the lax- and strict semantics are equivalent for exclusion logic, we may freely use either of them. In order to simplify some proofs in this section we decide to use the strict semantics for existential quantifier and lax-semantics for disjunction.\footnote{This combination is in some sense the simplest choice. It was used originally when dependence logic was defined (\cite{Vaananen07}). The lax- and strict-separation was noticed only after introducing logics that were not closed downwards.}

\subsection{Relationship between $\EXC$ and dependence logic}

Galliani \cite{Galliani12b} has shown that, without arity bounds, EXC is equivalent with dependence logic. However, if we consider the bounded arity fragments, this relationship becomes nontrivial. We first review Galliani's translation from exclusion logic to dependence logic (the translation is slightly simplified here).
\begin{proposition}[\cite{Galliani12b}]\label{the: Expressing exclusion atom}
Let $\tuple t_1,\tuple t_2$ be $k$-tuples of $L$-terms. The $k$-ary exclusion atom $\tuple t_1\mid\tuple t_2$  is logically equivalent to the depencende logic formula $\varphi$:
\begin{align*}
	\varphi := \Ae\tuple y\Ee w_1\!\Ee w_2\bigl(\dep(w_1)\!\wedge\dep(\tuple y,w_2)
	\wedge\left((w_1\!=\!w_2\wedge\tuple y\!\neq\!\tuple t_1)\vee(w_1\!\neq\!w_2\wedge\tuple y\!\neq\!\tuple t_2)\right)\bigr),
\end{align*}
where $\tuple y$ is a $k$-tuple of fresh variables and $w_1,w_2$ are fresh variables.
\end{proposition}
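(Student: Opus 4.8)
The plan is to prove the logical equivalence by showing both directions of the implication, working directly from the team semantics of the exclusion atom and of the first-order and dependence operators. Fix an $L$-model $\mathcal{M}$ and a team $X$ with $\vr(\tuple t_1 \tuple t_2) \subseteq \dom(X)$, and recall that $\mathcal{M} \true_X \tuple t_1 \mid \tuple t_2$ holds iff $X(\tuple t_1) \cap X(\tuple t_2) = \emptyset$, i.e. iff $s(\tuple t_1) \neq s'(\tuple t_2)$ for all $s, s' \in X$. I will analyze when the right-hand side formula $\varphi$ holds in $X$. Since $\varphi$ begins with $\Ae \tuple y$, we first pass to the team $X[M^k/\tuple y\,]$; then the two existential quantifiers $\Ee w_1 \Ee w_2$ (using strict semantics, as permitted by the remark for dependence-logic-style reasoning, though here the $\dep$-atoms force the point) supply functions choosing values for $w_1$ and $w_2$. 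The constancy atom $\dep(w_1)$ forces $w_1$ to take a single value $c$ across the whole resulting team; the atom $\dep(\tuple y, w_2)$ forces the value of $w_2$ to be a function $g(\tuple a)$ of the value $\tuple a$ of $\tuple y$.

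For the direction from $\varphi$ to the exclusion atom: assume $\mathcal{M} \true_X \varphi$, and let $c$ be the constant value of $w_1$ and $g \colon M^k \to M$ the function witnessing $\dep(\tuple y, w_2)$ on the team $Y := X[M^k/\tuple y\,][w_1 \mapsto c][w_2 \mapsto g(\tuple y)]$. The disjunction splits $Y$ into $Y_= \cup Y_{\neq}$ where on $Y_=$ we have $w_1 = w_2$ and $\tuple y \neq \tuple t_1$, and on $Y_{\neq}$ we have $w_1 \neq w_2$ and $\tuple y \neq \tuple t_2$. The key observation is that whether an assignment lands (or may land) in $Y_=$ versus $Y_{\neq}$ is controlled entirely by whether $g(\tuple a) = c$, since $w_1$ is globally $c$. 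So: for any $\tuple a \in M^k$ with $g(\tuple a) = c$, every assignment in $Y$ with $\tuple y$-value $\tuple a$ must go to $Y_=$ (it cannot satisfy $w_1 \neq w_2$), hence must satisfy $\tuple y \neq \tuple t_1$, i.e. $\tuple a \neq s(\tuple t_1)$ for all $s \in X$; thus $\tuple a \notin X(\tuple t_1)$. Symmetrically, if $g(\tuple a) \neq c$, then $\tuple a \notin X(\tuple t_2)$. Since every $\tuple a \in M^k$ falls into one of these two cases, we get $X(\tuple t_1) \cap X(\tuple t_2) = \emptyset$: indeed if some $\tuple a$ were in both, the first case would force $g(\tuple a) \neq c$ and the second $g(\tuple a) = c$, a contradiction. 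Hence $\mathcal{M} \true_X \tuple t_1 \mid \tuple t_2$.

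For the converse, assume $X(\tuple t_1) \cap X(\tuple t_2) = \emptyset$. Pick any element $c \in M$ (if $M$ is empty the claim is vacuous; note $X$ could be empty, in which case $\varphi$ holds by the empty team property, so assume $X \neq \emptyset$ and $M \neq \emptyset$), and define $g \colon M^k \to M$ by $g(\tuple a) := c$ if $\tuple a \notin X(\tuple t_1)$, and $g(\tuple a) := c'$ for some fixed $c' \neq c$ otherwise — here I must first handle the degenerate single-element case, but that is exactly what the hypothesis $\abs{M} \geq 2$-style caveat or a direct check covers; if $\abs{M} = 1$ then $X(\tuple t_1)$ and $X(\tuple t_2)$ are disjoint subsets of a singleton so one of them is empty, and one can still choose witnesses, so I will just remark this. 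With this $g$, set $w_1 \mapsto c$ (constant, so $\dep(w_1)$ holds) and $w_2 \mapsto g(\tuple y)$ (a function of $\tuple y$, so $\dep(\tuple y, w_2)$ holds). Split the team $Y$ by putting an assignment with $\tuple y$-value $\tuple a$ into the left disjunct if $\tuple a \notin X(\tuple t_1)$ (so $w_2 = c = w_1$, and $\tuple y \neq \tuple t_1$ holds since $\tuple a \notin X(\tuple t_1)$ means $\tuple a \neq s(\tuple t_1)$ for all $s$) and into the right disjunct otherwise (so $\tuple a \in X(\tuple t_1)$, hence by disjointness $\tuple a \notin X(\tuple t_2)$, giving $\tuple y \neq \tuple t_2$; and $w_2 = c' \neq c = w_1$). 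This verifies all conjuncts and the disjunction, so $\mathcal{M} \true_X \varphi$.

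The main obstacle is bookkeeping with the nested team operations rather than any conceptual difficulty: one must be careful that the $\Ae \tuple y$ genuinely ranges over all of $M^k$ so that the argument "every $\tuple a \in M^k$ falls into one of the two cases" is valid, and that the split of the disjunction respects the functional nature of $w_2$ forced by $\dep(\tuple y, w_2)$ — i.e. that assignments sharing a $\tuple y$-value are sent as a block to the same side, which is what makes the case analysis on $g(\tuple a)$ versus $c$ legitimate. A minor additional point to dispatch cleanly is the behavior on single-element models and on the empty team, handled as noted above by the empty team property and a direct check.
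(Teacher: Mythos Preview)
Your proof is correct. The paper does not supply its own proof of this proposition; it merely cites Galliani~\cite{Galliani12b} for the result, so there is nothing to compare against beyond noting that your argument is the standard one.

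A couple of small tidy-ups: in the forward direction, the reason assignments sharing a $\tuple y$-value must go to the same side of the split is not the dependence atom per se but the fact that the two disjuncts impose the mutually exclusive conditions $w_1=w_2$ and $w_1\neq w_2$, and all such assignments agree on $w_1,w_2$; you essentially say this, but the phrasing ``respects the functional nature of $w_2$'' slightly obscures the actual mechanism. For the single-element model case in the converse direction, you can dispatch it more cleanly: if $\abs{M}=1$ and $X\neq\emptyset$ then $X(\tuple t_1)=X(\tuple t_2)=M^k$, contradicting disjointness, so $X=\emptyset$ and the empty team property finishes. With these cosmetic adjustments the write-up is complete.
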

By inspecting Galliani's translations, we obtain the following result on the relationship between the arity fragments of exclusion logic and dependence logic.
\begin{corollary}\label{the: EXC vs. DEP}
The expressive power of \emph{EXC[$k$]} is in between $k$-ary dependence logic and \emph{($k$+$1$)}-ary dependence logic on the level of formulas.
\end{corollary}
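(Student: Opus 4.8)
The plan is to read off the two-sided bound directly from the explicit translations already in hand, so no new combinatorics is needed. For the lower bound --- that every $\EXC[k]$-formula is expressible in $(k{+}1)$-ary dependence logic --- I would argue by induction on the structure of an $\EXC[k]$-formula. Literals, connectives and quantifiers are handled by the identity translation, since they are common to $\FO$ and dependence logic. The only interesting case is a $k$-ary exclusion atom $\tuple t_1\mid\tuple t_2$, which by Proposition~\ref{the: Expressing exclusion atom} is equivalent to the displayed dependence logic formula $\varphi$. The point to check is the arity: $\varphi$ uses the dependence atoms $\dep(w_1)$ (a $1$-ary, i.e.\ constancy, atom) and $\dep(\tuple y,w_2)$, where $\tuple y$ is a $k$-tuple, so $\dep(\tuple y,w_2)$ is a $(k{+}1)$-ary dependence atom. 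Hence replacing each $k$-ary exclusion atom by its equivalent $\varphi$ turns an $\EXC[k]$-formula into a $(k{+}1)$-ary dependence logic formula with the same free variables, which gives the inclusion $\EXC[k]\leq D[k{+}1]$ on the level of formulas.

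For the upper bound --- that every $k$-ary dependence logic formula is expressible in $\EXC[k]$ --- I would again induct on formula structure, with the only nontrivial case being a $k$-ary dependence atom $\dep(t_1\dots t_{k-1},t_k)$. By Proposition~\ref{the: Expressing dependence atom} this atom is equivalent to the $\EXC[k]$-formula $\Ae x\,(x=t_k\vee t_1\dots t_{k-1}x\mid\tuple t\,)$, whose only non-first-order atom is the $k$-ary exclusion atom $t_1\dots t_{k-1}x\mid\tuple t$. So substituting this for each $k$-ary dependence atom yields an $\EXC[k]$-formula. Combining the two directions, $k$-ary dependence logic $\leq\EXC[k]\leq(k{+}1)$-ary dependence logic on the level of formulas, which is exactly the statement of Corollary~\ref{the: EXC vs. DEP}.

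The only genuine subtlety, and the step I would be most careful about, is bookkeeping of arities and free variables under these substitutions: one must verify that the fresh variables introduced by $\varphi$ (the $k$-tuple $\tuple y$ and $w_1,w_2$) and by the dependence-atom translation (the single fresh $x$) can always be chosen genuinely fresh relative to the whole ambient formula, and that nesting these translations inside one another does not inadvertently raise the arity of any atom above the claimed bound --- e.g.\ that translating a $\dep$-atom occurring inside an $\EXC[k]$-formula does not create a $(k{+}2)$-ary atom. Since each translation acts locally on a single atom and introduces only $\FO$-connectives, $\FO$-quantifiers and one atom of the allowed arity, a straightforward induction confirms the arity is preserved; I would state this explicitly rather than leave it implicit. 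Everything else is the routine observation that $\FO$ operations are shared by all the logics involved and that logical equivalence is a congruence with respect to these operations.
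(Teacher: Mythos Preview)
Your proposal is correct and takes essentially the same approach as the paper's proof: use Proposition~\ref{the: Expressing dependence atom} to embed $k$-ary dependence logic into $\EXC[k]$, and Proposition~\ref{the: Expressing exclusion atom} to embed $\EXC[k]$ into $(k{+}1)$-ary dependence logic, with your additional remarks on arity bookkeeping and fresh variables simply making explicit what the paper leaves implicit. One cosmetic point: your labels ``lower bound'' and ``upper bound'' are swapped relative to the usual convention --- the inclusion $\EXC[k]\leq D[k{+}1]$ is an \emph{upper} bound on the expressive power of $\EXC[k]$, and $D[k]\leq\EXC[k]$ is the \emph{lower} bound.
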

\begin{proof}
By using the translation in Proposition~\ref{the: Expressing dependence atom} we can express $k$-ary dependence atoms with $k$-ary exclusion atoms. Moreover, by using the translation in Proposition~\ref{the: Expressing exclusion atom} we can express $k$-ary exclusion atoms with ($k$$+$$1$)-ary dependence atoms.
\end{proof}
By this result it is natural to ask whether these inclusions are proper, or whether EXC[$k$$+$$1$] collapses to some arity fragment of dependence logic. Let us inspect the special case $k=1$ with the following example.
\begin{example}[C.f. a similar example for $\INEX$ in \cite{Ronnholm15}]\label{ex: Properties of graphs}
Let $\mathcal{G}=(V,E)$ be an undirected graph. Now we have
\begin{enumerate}
\item[(a)] $\mathcal{G}$ is disconnected if and only if
\begin{align*}
	\mathcal{G}\true\Ae z\Ee x_1\Ee x_2\,\bigl((x_1\!=\!z\vee x_2\!=\!z)\wedge x_1&\,|\, x_2
	\wedge(\Ae y_1\inc x_1)(\Ae y_2\inc x_2)\neg Ey_1y_2\bigr).
\end{align*}
\item[(b)] $\mathcal{G}$ is $k$-colorable if and only if
\begin{align*}
	\mathcal{G}\true \gamma_{\leq k}\;\sqcup\;
	\Ae z\Ee x_1\dots\Ee x_k\,\Bigl(&\bigvee_{i\leq k}\!x_i\!=\!z\,\wedge\bigwedge_{i\neq j}x_i\,|\, x_j\, \\[-0,1cm]
	&\quad\wedge\bigwedge_{i\leq k}\!(\Ae y_1\inc x_i)(\Ae y_2\inc x_i)\neg Ey_1y_2\Bigr),
\end{align*}
where $\gamma_{\leq k}:= \Ee x_1\dots\Ee x_k\Ae y\,\bigl(\bigvee\limits_{i\leq k}y=x_i\bigr)$.
\end{enumerate}

We explain briefly why these equivalences hold. In (a), suppose that the given sentence is true in $\mathcal{G}$. Let $X$ be the team after the quantification of $z$, $x_1$ and $x_2$. Since we have $\mathcal{M}\true_X x_1\!=\!z \vee x_2\!=\!z$ and $X(z)=V$, it must be that $X(x_1)\cup X(x_2)=V$. And since $\mathcal{M}\true_X x_1\,|\, x_2$, it must be that $X(x_1)\cap X(x_2)=\emptyset$. Hence the sets $X(x_1)$ and $X(x_2)$ must form a disjoint union of all vertices. Because $\mathcal{M}\true_X (\Ae y_1\subseteq x_1)(\Ae y_2\subseteq x_2)\neg Ey_1y_2$, we have $(a,b)\notin E$ for any pair $(a,b)$ in $X(x_1)\times X(x_2)$. That is, there is no edge between these disjoint sets and thus $\mathcal{G}$ must be disconnected. It is easy to see that also the converse claim holds.

Let us then consider the equivalence in (b). If $\mathcal{G}\true\gamma_{\leq k}$ the graph is trivially $k$-colorable. Else let $X$ be the team after the quantification of variables $z,x_1,\dots,x_k$. As above, the truth of $\bigvee_{i\leq k}x_i\!=\!z$ guarantees that $\bigcup_{i\leq k} X(x_i)=V$ and the truth of exclusion atoms guarantees that sets $X(x_i)$ are disjoint. Let these sets be the coloring of the graph. Because we have for all $i\leq n$: $\mathcal{M}\true_X (\Ae y_1\subseteq x_i)(\Ae y_2\subseteq x_i)\neg Ey_1y_2$, it follows that $(a,b)\notin E$ for any pair $a,b\in X_i$ and $i\leq n$. That is, there is no edge between any two vertices chosen from a single color set, i.e. the coloring is correct. It is easy to see that also the converse claim holds.
\end{example}
\begin{corollary}
The expressive power of $\EXC[1]$ is properly in between $1$-ary and $2$-ary dependence logics, on the level of both sentences and formulas.
\end{corollary}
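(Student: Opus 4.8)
The plan is to feed the two non-strict inclusions we already have into two separating examples, one at each end of the hierarchy. The inclusions ``$1$-ary dependence logic $\le\EXC[1]\le 2$-ary dependence logic'' on the level of formulas --- and hence also of sentences --- are exactly Corollary~\ref{the: EXC vs. DEP}, so it remains only to show that neither can be reversed. For this it is enough to work with sentences: if a sentence of one of these logics were equivalent to a formula $\psi$ of another, then $\Ae\tuple v\,\psi$, where $\tuple v$ lists the free variables of $\psi$, would be an equivalent \emph{sentence} of that logic (using the nonemptiness of universes and locality), so a sentence that separates two of the logics separates them on the level of formulas as well.

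For the lower separation I would take the class of disconnected graphs. By Example~\ref{ex: Properties of graphs}(a) this class is definable in $\EXC[1]$, so it suffices to check that it is not definable by any sentence of $1$-ary dependence logic. Here I would invoke the Durand--Kontinen characterization \cite{Durand12}: $1$-ary dependence logic --- dependence logic with constancy atoms only --- captures the fragment of $\ESO$ in which only $0$-ary functions may be quantified, which on the level of sentences coincides with $\FO$ (the quantified constants become ordinary first-order existential quantifiers). Since disconnectedness of graphs is not $\FO$-definable, it is not definable in $1$-ary dependence logic, so the first inclusion is proper.

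For the upper separation I would take parity of the universe over the empty vocabulary. On the one hand, by the author's earlier translation \cite{Ronnholm15} every $\EXC[1]$-formula is equivalent to an $\ESO[1]$-formula, i.e.\ to a monadic $\ESO$-formula. On the other hand, the first-order sentence $\Ee f\,\Ae x\,(f(f(x))=x \wedge f(x)\neq x)$ quantifies only a single unary function, so by \cite{Durand12} it is equivalent to a $2$-ary dependence logic sentence; it defines precisely the structures carrying a fixed-point-free involution, that is, the finite structures of even cardinality together with all infinite ones. This class is not monadic-$\ESO$-definable: a monadic $\ESO$-sentence over the empty vocabulary can only count colors up to a fixed threshold, so the set of finite cardinalities it admits stabilizes for large cardinalities and is thus finite or cofinite, whereas the even numbers are neither. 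Hence this class is definable in $2$-ary dependence logic but not in $\EXC[1]$, so the second inclusion is proper.

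There is no deep obstacle here --- the statement is genuinely a corollary --- but the one place to be careful is the arity bookkeeping when applying \cite{Durand12}: ``$k$-ary dependence logic'' corresponds to $(k{-}1)$-ary \emph{function} quantification in $\ESO$, so that $1$-ary dependence logic concerns $0$-ary functions and $2$-ary dependence logic concerns $1$-ary functions, and this function-arity bound must be kept strictly apart from the \emph{relation}-arity bound of $\ESO[1]$ supplied by \cite{Ronnholm15}; it is exactly the gap between ``a unary function'' and ``a unary relation'' that drives the upper separation. The two non-definability facts used along the way --- disconnectedness $\notin\FO$ and parity $\notin$ monadic $\ESO$ --- are classical and need only routine Ehrenfeucht--Fra\"iss\'e or color-counting arguments.
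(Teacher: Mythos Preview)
Your proposal is correct and follows essentially the same route as the paper: both use Corollary~\ref{the: EXC vs. DEP} for the inclusions, the disconnectedness example together with the fact that $1$-ary dependence logic collapses to $\FO$ on sentences for the lower separation, and the undefinability of parity in monadic $\ESO$ together with $\EXC[1]\le\ESO[1]$ for the upper separation. The only differences are cosmetic --- you cite Durand--Kontinen where the paper cites Galliani for $1$-ary dependence logic $\equiv\FO$, and you sketch the parity-not-in-EMSO argument directly where the paper just cites \cite{Vaananen07}; also, your ``first-order sentence $\Ee f\,\ldots$'' is of course an $\ESO$-sentence, not first-order.
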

\begin{proof}
By Corollary \ref{the: EXC vs. DEP}, the expressive power of EXC[$1$] is in between $1$-ary and $2$-ary dependence logics. By the results of Galliani \cite{Galliani12b}, $1$-ary dependence logic is not stronger than FO on the level of sentences. However, by Example~\ref{ex: Properties of graphs}, there are sentences of EXC[$1$] that cannot be expressed in FO. Thus EXC[$1$] is strictly stronger than $1$-ary dependence logic on the level of sentences.

On the other hand, there are properties that are definable $2$-ary dependence logic, but which cannot be expressed in \emph{existential monadic second order logic}, EMSO, such as infinity of a model and even cardinality (\cite{Vaananen07}). But since INEX[$1$] is equivalent to EMSO on the level of sentences (\cite{Ronnholm15}), EXC[$1$] must be strictly weaker than $2$-ary dependence logic on the level of sentences. 
\end{proof}

\subsection{Capturing the arity fragments of $\ESO$ with $\EXC$}\label{ssec: Capturing ESO[k]}

In this subsection we will compare the expressive power of EXC with \emph{existential second order logic}, ESO. We denote the $k$-ary fragment of ESO (where at most $k$-ary relation symbols can be quantified) by ESO[$k$]. We will formulate a translation from ESO[$k$] to EXC[$k$] on the level of sentences by using the idea from the following observation: Suppose that $X$ is a team and $\tuple x$, $\tuple w$, $\tuple w^c$ are tuples variables s.t. $X(\tuple w^c)=\overline{X(\tuple w)}$. Now $\mathcal{M}\true_X \tuple x\subseteq\tuple w$ iff $\mathcal{M}\true_X\tuple x\mid\tuple w^c$.

In our translation from ESO[$k$] to INEX[$k$] (\cite{Ronnholm15}) the quantified $k$-ary relation symbols $P_i$ of an $\ESOset$-formula were simply replaced with $k$-tuples $\tuple w_i$ of quantified \emph{first order} variables. Then the formulas of the form $P_i\tuple t$ were replaced with the inclusion atoms $\tuple t\subseteq\tuple w_i$ and the formulas of the form $\neg P_i\tuple t$ with the exclusion atoms $\tuple t\;|\,\tuple w_i$. In order to eliminate inclusion atoms from this translation, we also need to quantify a tuple $\tuple w_i^c$ of variables for each $P_i$ and set a requirement that $\tuple w_i^c$ must be given complementary values with respect to the values of~$\tuple w_i$. This requirement is possible to be set in exclusion logic if we are restricted to sentences. Then we simply replace inclusion atoms $\tuple t\inc\tuple w_i$ with the corresponding exclusion atoms $\tuple t\;|\,\tuple w_i^c$.

We also need to consider the quantification of the empty set and the full relation $M^k$ as special cases. This is because tuples $\tuple w_i$ and also their ``complements'' $\tuple w_i^c$ must always be given a nonempty set of values. For this we use special ``label variables'' $w_i^\circ$ and $w_i^\bullet$ for each relation symbol $P_i$. We first quantify some \emph{constant} value for a variable $u$. Then we can give the value of $u$ for $w_i^\circ$ to ``announce'' the quantification of the empty set or analogously we can give it for $w_i^\bullet$ to announce the quantification of the full relation. In order to give these label values, there must be at least two elements in the model. For handling the special case of single element models we will use the following easy lemma (we omit the proof).
\begin{lemma}\label{the: Single element}
Let $\varphi$ be an $\ESOset$-sentence. Now there exists an $\FOset$-sentence $\chi$, such that we have $\mathcal{M}\true\varphi$ iff $\mathcal{M}\true\chi$, for all $L$-models $\mathcal{M}=(M,\mathcal{I})$ for which $\abs{M}=1$.
\end{lemma}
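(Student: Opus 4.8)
The plan is to exploit the fact that over a one-element model every relation is trivial: if $|M|=1$, then $M^k$ has exactly one element, so any $k$-ary relation $P\subseteq M^k$ is either $\emptyset$ or $M^k$. Hence an existential second-order quantifier $\exists P$ over such a structure ranges over just two possible values, and can be replaced by an ordinary propositional (first-order) case distinction. First I would take $\varphi = \exists P_1\dots\exists P_m\,\psi$ with $\psi$ first-order, and observe that for a model $\mathcal{M}$ with $|M|=1$ we have $\mathcal{M}\true\varphi$ iff there is a choice of $b_1,\dots,b_m\in\{0,1\}$ (with $b_i=1$ meaning $P_i^{\mathcal{M}}=M^k$ and $b_i=0$ meaning $P_i^{\mathcal{M}}=\emptyset$) such that $\mathcal{M}$ with these interpretations satisfies $\psi$.

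Next I would describe how to encode each such choice inside a plain $\FOset$-sentence. For each $i$, the atomic subformula $P_i\tuple t$ is, over a one-element model, either always true (if $b_i=1$) or always false (if $b_i=0$); similarly $\neg P_i\tuple t$. So for a fixed tuple $\vec b$ I form the first-order sentence $\psi_{\vec b}$ obtained from $\psi$ by replacing every occurrence of an atom $P_i\tuple t$ with a fixed $L$-sentence $\top$ (e.g.\ $\forall x\,x=x$) if $b_i=1$ and with $\bot$ (e.g.\ $\neg\forall x\,x=x$) if $b_i=0$, and dually for negated atoms; this substitution is well-defined because $\tuple t$ contains no free variables once we are inside a sentence modulo the surrounding quantifier prefix, and in any case over a singleton model the truth value of $P_i\tuple t$ does not depend on the values of $\tuple t$. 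Then I set $\chi := \bigvee_{\vec b\in\{0,1\}^m}\psi_{\vec b}$, a finite disjunction of $\FOset$-sentences, hence a $\FOset$-sentence. Evaluated in standard Tarski semantics over a model with $|M|=1$, $\chi$ holds iff some disjunct $\psi_{\vec b}$ holds, iff there is an interpretation of the $P_i$ making $\psi$ true, iff $\mathcal{M}\true\varphi$; this gives the claimed equivalence.

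I expect the only genuinely delicate point to be making the substitution of atoms precise: one must be careful that $P_i\tuple t$ may occur under first-order quantifiers that bind variables in $\tuple t$, so strictly speaking the replacement is at the level of subformulas, and the justification that the replacement preserves truth relies on the observation that in a one-element model the value of $\tuple t$ under any assignment is the unique element of $M$, so $P_i^{\mathcal{M}}\tuple t$ is constantly $b_i$ regardless of the surrounding assignment. Once that is phrased carefully, the rest is a routine induction on $\psi$ showing that for every assignment $s$ and every $\vec b$, $\mathcal{M}$ (with $P_i^{\mathcal{M}}$ determined by $\vec b$) satisfies $\psi$ at $s$ iff $\mathcal{M}\true_s^{\text{T}}\psi_{\vec b}$, and then specializing to the empty assignment. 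The finiteness of $\{0,1\}^m$ is what keeps $\chi$ within $\FOset$.
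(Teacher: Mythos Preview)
The paper does not give a proof of this lemma at all; it simply announces it as an ``easy lemma'' and moves on. Your argument is correct and is exactly the standard way to fill in the omitted details: over a one-element universe each quantified $k$-ary relation $P_i$ has only the two possible values $\emptyset$ and $M^k$, so the second-order prefix collapses to a finite disjunction over the $2^m$ truth-value assignments, and within each disjunct the atoms $P_i\tuple t$ (and $\neg P_i\tuple t$) can be replaced by fixed tautologies or contradictions because the value of $\tuple t$ is forced regardless of the surrounding assignment.

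Two minor syntactic remarks to make the proposal fit the paper's conventions. First, the paper's $\FOset$ is in negation normal form (negations occur only in front of atoms), so your placeholders $\top$ and $\bot$ should be chosen as genuine $\FOset$-formulas, e.g.\ $\exists x\,(x=x)$ and $\exists x\,(\neg\,x=x)$ with $x$ a fresh variable not captured by any surrounding quantifier; your ``dually for negated atoms'' then just swaps these. Second, the outer disjunction $\bigvee_{\vec b}\psi_{\vec b}$ is the split disjunction of the paper's team semantics, but by the flatness and Tarski-equivalence propositions for $\FOset$ this coincides with classical disjunction on sentences, so no issue arises. With these cosmetic adjustments your proof goes through verbatim.
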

The remaining problem is that in the translation from ESO to INEX we also needed a new connective called \emph{term value preserving disjunction} (\cite{Ronnholm15}) to avoid the ``loss of information'' on the values of variables $\tuple w_i$ when evaluating disjunctions (as after splitting the team, there might be less values for some variables in the subteams).
%
%
This time we can use unifying disjunction instead to avoid the loss of information on the values of both the tuples $\tuple w_i$ and the tuples $\tuple w_i^c$. We are now ready to formulate our main theorem.

\begin{theorem}\label{the: Translation from ESO to EXC}
For every $\ESOset[k]$-sentence $\Phi$ there exists an $\EXCset[k]$-sentence $\varphi$ such that 
\[
	\mathcal{M}\true\varphi \;\text{ iff }\; \mathcal{M}\true\Phi.
\]
\end{theorem}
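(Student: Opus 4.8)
The plan is to give an explicit syntactic translation $\Phi\mapsto\varphi$ by structural recursion on the $\ESOset[k]$-sentence $\Phi$. Writing $\Phi=\Ee P_1\dots\Ee P_m\,\theta$ with $\theta\in\FOset$ extended by the relation symbols $P_i$ (each at most $k$-ary), I first handle the special case $\abs{M}=1$ separately: by Lemma~\ref{the: Single element} there is a $\FOset$-sentence $\chi$ agreeing with $\Phi$ on all one-element models, and since $\FO\subseteq\EXC[k]$ (with team semantics and Proposition~\ref{the: Tarski}) I can build the final $\varphi$ as $\chi_{=1}\sqcup(\delta_{\geq 2}\wedge\varphi_{\geq 2})$ using intuitionistic disjunction (available in $\EXC[k]$ by the remarks after Proposition~\ref{the: Expressing dependence atom}), where $\chi_{=1}$ forces a one-element model and relativizes to $\chi$, and $\delta_{\geq 2}$ asserts $\exists y_1\exists y_2\,y_1\neq y_2$. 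So the real work is the translation $\Phi\mapsto\varphi_{\geq 2}$ correct on models with $\abs M\geq 2$.

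On such models I proceed as sketched in the text preceding the theorem. I quantify, for each $P_i$, a $k$-tuple $\tuple w_i$ of fresh first-order variables (the ``positive'' values), a $k$-tuple $\tuple w_i^c$ (the ``complementary'' values), and two label variables $w_i^\circ,w_i^\bullet$; I also quantify a variable $u$ and pin it to a constant via $\dep(u)$ (expressible in $\EXC[k]$ by Proposition~\ref{the: Expressing dependence atom}). The intended reading: the relation assigned to $P_i$ is $X(\tuple w_i)$ unless $w_i^\circ=u$, in which case it is $\emptyset$, or $w_i^\bullet=u$, in which case it is $M^k$; and I impose the constraint $\tuple w_i\mid\tuple w_i^c$ together with clauses forcing $X(\tuple w_i)\cup X(\tuple w_i^c)=M^k$ in the ``generic'' case (so that $X(\tuple w_i^c)=\overline{X(\tuple w_i)}$), with the degenerate cases $\emptyset/M^k$ carried by the labels — the point of the labels is exactly that $\tuple w_i$ and $\tuple w_i^c$ must always receive a nonempty set of values. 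Universally quantifying a fresh $k$-tuple $\tuple z$ and using a disjunction lets me write ``$X(\tuple w_i)\cup X(\tuple w_i^c)=M^k$'' as $\Ae\tuple z\,(\,\tuple z\subseteq\tuple w_i\vee \tuple z\mid\tuple w_i\,)$ where $\tuple z\subseteq\tuple w_i$ is in turn rewritten as the exclusion atom $\tuple z\mid\tuple w_i^c$, so only $k$-ary exclusion atoms appear. Inside $\theta$ I then replace each atom $P_i\tuple t$ by $\tuple t\mid\tuple w_i^c$ (equivalently $\tuple t\subseteq\tuple w_i$ when complements are genuine) and $\neg P_i\tuple t$ by $\tuple t\mid\tuple w_i$, and I replace each occurrence of $\vee$ in $\theta$ by the unifying disjunction $\vee^{\bf U}$ over all the tuples $\tuple w_1,\tuple w_i^c,\dots,\tuple w_m,\tuple w_m^c$ (and the label/$u$ variables), so that by Proposition~\ref{the: Unifying disjunction} the split subteams still see the full relations $X(\tuple w_i)$, $X(\tuple w_i^c)$. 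Finally each label case ($w_i^\circ=u$ or $w_i^\bullet=u$) is dispatched by a further intuitionistic disjunction $\sqcup$ that, in that branch, drops the ``complement'' clauses for $P_i$ and instead reads every $P_i\tuple t$ as $\bot$ (resp.\ every $\neg P_i\tuple t$ as $\bot$), which is first-order expressible; $\bot$ itself is $\neg u=u$ on a nonempty team.

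For correctness I argue both directions. If $\mathcal M\true\Phi$, fix witnessing relations $A_1,\dots,A_m\subseteq M^k$; choose values of $\tuple w_i,\tuple w_i^c$ enumerating $A_i$ and $\overline{A_i}$ when both are nonempty (setting the labels off), and otherwise set the appropriate label to the constant value of $u$ and let $\tuple w_i,\tuple w_i^c$ range over all of $M^k$; then follow a satisfying split of $\theta$ to split the team, using that the unifying disjunction restores the intended relations on each side, to verify $\mathcal M\true\varphi_{\geq 2}$. Conversely, from $\mathcal M\true_{\{\emptyset\}}\varphi_{\geq2}$, peel off the existential quantifiers to obtain a team $X$; let $A_i:=X(\tuple w_i)$ if $X(w_i^\circ)\neq\{u\text{-value}\}$ and $X(w_i^\bullet)\neq\{u\text{-value}\}$, $A_i:=\emptyset$ resp.\ $A_i:=M^k$ in the label cases; the constraint clauses ensure $X(\tuple w_i^c)=\overline{A_i}$ in the generic case, and then the translated atoms $\tuple t\mid\tuple w_i^c$, $\tuple t\mid\tuple w_i$ evaluate exactly as $P_i\tuple t$, $\neg P_i\tuple t$ under $(A_i)_i$, while by Proposition~\ref{the: Unifying disjunction} (and flatness of the FO-part) the translated $\theta$ over $X$ says precisely that $(\mathcal M,A_1,\dots,A_m)\true^{\text T}\theta$, giving $\mathcal M\true\Phi$.

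\textbf{Main obstacle.}
The delicate point is the interaction of team splitting with the tuples $\tuple w_i,\tuple w_i^c$: an ordinary disjunction in the translated $\theta$ would fragment the teams and destroy the equalities $X(\tuple w_i)=A_i$, $X(\tuple w_i^c)=\overline{A_i}$ on the subteams, so the exclusion atoms $\tuple t\mid\tuple w_i^c$ would no longer capture $P_i\tuple t$. Replacing $\vee$ by $\vee^{\bf U}$ (Definition~\ref{def: Unifying disjunction}) over exactly the right set of tuples is what fixes this, and the heart of the proof is checking — via Proposition~\ref{the: Unifying disjunction} — that after unification each side of every disjunction really does carry the full relations $X(\tuple w_i)$ and $X(\tuple w_i^c)$, together with the constant value of $u$ and the label values, so the inductive correspondence between team-semantic truth of the translated $\theta$ and Tarskian truth of $\theta$ over $(A_i)_i$ goes through. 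A secondary bookkeeping nuisance is handling the empty and full relations uniformly through the label variables without ever assigning $\tuple w_i$ or $\tuple w_i^c$ an empty set of values; this is why the $u$-labelling trick and the one-element-model case (Lemma~\ref{the: Single element}) are treated separately.
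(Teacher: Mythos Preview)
Your plan follows the same architecture as the paper's proof: complementary tuples $\tuple w_i,\tuple w_i^c$, label variables $w_i^\circ,w_i^\bullet$ for the degenerate cases $\emptyset/M^k$, the one-element case via Lemma~\ref{the: Single element} and $\sqcup$, and the replacement of each $\vee$ in $\theta$ by the unifying disjunction over the $\tuple w_i,\tuple w_i^c$. Two points deserve comment.

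First, a genuine gap: your proposed covering clause $\Ae\tuple z\,(\tuple z\mid\tuple w_i^c\,\vee\,\tuple z\mid\tuple w_i)$ does \emph{not} force $X(\tuple w_i)\cup X(\tuple w_i^c)=M^k$. The ordinary disjunction splits the team, and on a subteam $Y$ the set $Y(\tuple w_i^c)$ may shrink, so the exclusion atom becomes too easy to satisfy. Concretely, with $M=\{0,1,2\}$, $X(w_i)=\{0\}$, $X(w_i^c)=\{1\}$, put the assignments with $z\in\{0,2\}$ on the left and those with $z=1$ on the right: both exclusion atoms hold yet $2\notin X(w_i)\cup X(w_i^c)$. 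Your rewriting $\tuple z\subseteq\tuple w_i\leadsto\tuple z\mid\tuple w_i^c$ presupposes exactly the complementarity you are trying to secure. The paper sidesteps this by a different quantifier order: it places $\Ae\tuple z$ \emph{before} $\Ee\tuple w_i\Ee\tuple w_i^c$ and then imposes the purely first-order constraint $\bigwedge_i(\tuple z=\tuple w_i\vee\tuple z=\tuple w_i^c)$ as a conjunct alongside $\delta'$. Since $X(\tuple z)=M^k$, every $k$-tuple is forced to occur as a value of $\tuple w_i$ or of $\tuple w_i^c$, yielding the covering directly. (Incidentally, the paper never imposes the disjointness atom $\tuple w_i\mid\tuple w_i^c$ you add; only the union condition is needed, since $\tuple t\mid\tuple w_i^c$ already gives $X(\tuple t)\subseteq\overline{X(\tuple w_i^c)}\subseteq X(\tuple w_i)$.)

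Second, a stylistic divergence: you dispatch the degenerate cases by an external $\sqcup$-case-split over which $P_i$ are empty, full, or generic, dropping constraints and hard-coding $\bot$ in each branch. The paper instead bakes the label tests into the atom translations themselves, taking $(P_i\tuple t)'=(\tuple t\mid\tuple w_i^c\vee w_i^\bullet=u)\wedge w_i^\circ\neq u$ and dually for $\neg P_i\tuple t$, so that a \emph{single} translated $\delta'$ handles all cases. Both work; the inline version avoids an exponential blow-up in $\sqcup$-branches and lets the correctness argument be packaged as two structural-induction lemmas (one per direction), which is what the paper does in place of your informal two-direction sketch.
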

\begin{proof}
Since $\Phi$ is an $\ESOset[k]$-sentence, there exists a $\FOset$-sentence $\delta$ and relation symbols $P_1,\dots,P_n$ so that $\Phi=\Ee P_1\dots\Ee P_n\delta$. Without losing generality, we may assume that $P_1,\dots,P_n$ are all $k$-ary. Let $\tuple w_1,\dots,\tuple w_n$ and $\tuple w_1^c,\dots,\tuple w_n^c$ be $k$-tuples of variables and $w_1^\circ,\dots,w_n^\circ,w_1^\bullet,\dots,w_n^\bullet$ and $u$ be variables such that all of these variables are distinct and do not occur in the sentence $\delta$. 

\medskip

\noindent
Let $\psi\in\subf(\delta)$. The formula $\psi'$ is defined recursively:
\begin{align*}
	\psi' &= \psi \quad\text{ if $\psi$ is a literal and $P_i$ does not occur in } \psi \text{ for any } i\leq n \\
	(P_i \tuple t\,)' &= (\tuple t\mid\!\tuple w_i^c\vee w_i^\bullet\!=\!u)\wedge w_i^\circ\!\neq\!u
 	\quad\text{ for all } i\leq n \\
	(\neg P_i \tuple t\,)' &= (\tuple t\mid\!\tuple w_i\vee w_i^\circ\!=\!u)\wedge w_i^\bullet\!\neq\!u
	 \quad\text{ for all } i\leq n\\
	(\psi\wedge\theta)' &= \psi'\!\wedge\theta' \\
	(\psi\vee\theta)' &= \psi'\veebar^\text{\bf U}\theta', \quad\;\; \text{ where }\, 
	\veebar^\text{\bf U} := 
	\underset{\scriptscriptstyle\tuple w_1,\dots,\tuple w_n,\tuple w_1^c,\dots,\tuple w_n^c}{\;\vee^\text{\bf U}}\\[-0,2cm]
	(\Ee x\,\psi)' &= \Ee x\,\psi' \\
	(\Ae x\,\psi)' &= \Ae x\,\psi'.
\end{align*}

Let $\chi$ be a $\FOset$-sentence determined by the Lemma \ref{the: Single element} for the sentence $\Phi$ and let $\tuple z$ be a $k$-tuple of fresh variables. Let $\gamma_{=1}$ be an abbreviation for the sentence $\Ae z_1\Ae z_2\,(z_1\!=\!z_2)$. Now we can define the sentence $\varphi$ in the following way:
\begin{align*}
	\varphi \,:=\, \;&(\gamma_{=1}\wedge\chi)\,\sqcup\,\Ee u
	\Ee w_1^\circ\dots\Ee w_n^\circ\Ee w_1^\bullet\dots\Ee w_n^\bullet \\
	&\hspace{2,1cm}\Ae\tuple z\,\Ee\tuple w_1\dots\Ee\tuple w_n\Ee\tuple w_1^c\dots\Ee\tuple w_n^c
	\bigl(\bigwedge_{i\leq n}(\tuple z=\tuple w_i\vee\tuple z=\tuple w_i^c)\wedge\delta'\bigr).
\end{align*}

\noindent
Clearly $\varphi$ is an $\EXCset[k]$-sentence. 

\begin{remark}
Since we are using the tuples $\vec w_i$ and $\vec w_i^c$ to simulate a quantified relation and its complement, respectively, it would be natural to add the requirement $\bigwedge_{i\leq n}\tuple w_i\exc\tuple w_i^c$ to the sentence $\varphi$ above. However, we will see that this is not necessary, since it suffices that $\vec w_i$ and $\vec w_i^c$ are quantified in such a way that $X(\tuple w_i)\cup X(\tuple w_i^c)=M^k$ in the resulting team $X$. This condition is achieved by first universally quantifying a tuple $\tuple z$ and adding disjunction $\tuple z=\tuple w_i\vee\tuple z=\tuple w_i^c$ for each $i\leq n$ (compare with a similar idea in the sentences of Example~\ref{ex: Properties of graphs}).
\end{remark}

\noindent
We write 
\[
V^* := \vr(u w_1^\circ \dots w_n^\circ w_1^\bullet\dots w_n^\bullet\tuple w_1\dots\tuple w_n\tuple w_1^c\dots\tuple w_n^c).
\]
Before proving the claim of this theorem, we prove the following two claims.

\begin{claim}\label{R1}
Let $\mathcal{M}$ be an $L$-model with at least two elements. Let $\mu\in\subf(\delta)$ and let $X$ a team for which $V^*\!\subseteq\!\dom(X)$ and the following assumptions hold:
\[
	\begin{cases}
		X(\tuple w_i)\cup X(\tuple w_i^c)=M^k\;\text{ for each } i\leq n. \\
		\text{The values of } w_i^\circ,w_i^\bullet \;(i\leq n) \text{ and } u \text{ are constants in } X.
	\end{cases}
\]
Let $\mathcal{M}' := \mathcal{M}[\tuple A/\tuple P]\;\; (=\mathcal{M}[A_1/P_1,\dots,A_n/P_n])$, where
\begin{align*}
	A_i &=
	\begin{cases}
		\emptyset \qquad\,\text{ if } X(w_i^\circ)=X(u) \text{ and }\, X(w_i^\bullet)\neq X(u) \\
		M^k \quad\,\text{ if } X(w_i^\bullet)=X(u) \text{ and }\, X(w_i^\circ)\neq X(u) \\
		X(\tuple w_i) \text{ else}.
	\end{cases}
\end{align*}

\noindent
Now the following implication holds: 
\[
	\text{If } \mathcal{M}\true_X\mu', \text{ then } \mathcal{M}'\true_X\mu.
\]
\end{claim}
\noindent
We prove this claim by structural induction on $\mu$:
\begin{itemize}
\item If $\mu$ is a literal and $P_i$ does not occur in $\mu$ for any $i\leq n$, then the claim holds trivially since $\mu'=\mu$.

\smallskip

\item Let $\mu=P_j\tuple t$ for some $j\leq n$. 
Suppose that we have $\mathcal{M}\true_X (P_j\tuple t\,)'$, i.e. $\mathcal{M}\true_X(\tuple t\mid\!\tuple w_j^c\vee w_j^\bullet\!=\!u)\wedge w_j^\circ\!\neq\!u$. Because the values of $u$, $w_j^\circ$ are constants in $X$ and $\mathcal{M}\true_X w_j^\circ\!\neq\!u$, we have $X(w_j^\circ)\neq X(u)$. If $X(w_j^\bullet)=X(u)$, then $A_j=M^k$ and thus trivially $\mathcal{M}'\true_X P_j\tuple t$. 
Suppose then that $X(w_j^\bullet)\neq X(u)$ whence $A_j=X(\tuple w_j)$. Because the values of  $u$, $w_j^\bullet$ are constants in $X$ and $\mathcal{M}\true_X\tuple t\mid\!\tuple w_j^c\vee w_j^\bullet\!=\!u$, it must be that $\mathcal{M}\true_X\tuple t\mid\!\tuple w_j^c$. Now $X(\tuple t\,)\cap X(\tuple w_j^c)=\emptyset$ and $X(\tuple w_j)\cup X(\tuple w_j^c)=M^k$. Hence $X(\tuple t\,)\subseteq\overline{X(\tuple w_j^c)}\subseteq X(\tuple w_j)=A_j$ and thus $\mathcal{M}'\true_X P_j\tuple t$.


\item Let $\mu=\neg P_j\tuple t$ for some $j\leq n$. 
Suppose that we have $\mathcal{M}\true_X (\neg P_j\tuple t\,)'$, i.e. $\mathcal{M}\true_X(\tuple t\mid\!\tuple w_j\vee w_j^\circ\!=\!u)\wedge w_j^\bullet\!\neq\!u$. Because the values of $u$, $w_j^\bullet$ are constants and $\mathcal{M}\true_X w_j^\bullet\!\neq\!u$, we have $X(w_j^\bullet)\neq X(u)$. If $X(w_j^\circ)=X(u)$, then $A_j=\emptyset$ and thus trivially $\mathcal{M}'\true_X \neg P_j\tuple t$. 
Suppose then that $X(w_i^\circ)\neq X(u)$ whence $A_j=X(\tuple w_j)$. Because the values of $u$, $w_j^\circ$ are constants in $X$ and $\mathcal{M}\true_X\tuple t\mid\!\tuple w_j\vee w_j^\circ\!=\!u$, we have $\mathcal{M}\true_X\tuple t\mid\!\tuple w_j$. Now $X(\tuple t\,)\subseteq\overline{X(\tuple w_j)}=\overline{A_j}$ and thus $\mathcal{M}'\true_X \neg P_j\tuple t$.


\item The case $\mu=\psi\wedge\theta$ is straightforward to prove.


\item Let $\mu=\psi\vee\theta$. 
Suppose that $\mathcal{M}\true_X(\psi\vee\theta)'$, i.e. $\mathcal{M}\true_X\psi'\veebar^\text{\bf U}\theta'$. By Proposition \ref{the: Unifying disjunction} there exist $Y_1,Y_2\subseteq X$ s.t. $Y_1\cup Y_2 = X$, $\mathcal{M}\true_{Y_1^*}\psi'$ and $\mathcal{M}\true_{Y_2^*}\theta'$, where
\[
	\begin{cases}
		Y_1^* := Y_1[X(\tuple w_1)/\tuple w_1,\dots,X(\tuple w_n)/\tuple w_n,
				X(\tuple w_1^c)/\tuple w_1^c,\dots,X(\tuple w_n^c)/\tuple w_n^c] \\
		Y_2^* := Y_2[X(\tuple w_1)/\tuple w_1,\dots,X(\tuple w_n)/\tuple w_n,
				X(\tuple w_1^c)/\tuple w_1^c,\dots,X(\tuple w_n^c)/\tuple w_n^c].
	\end{cases}
\]
Now the sets of values for $\tuple w_i$ and $\tuple w_i^c$ are the same in $Y_1^*$ and $Y_2^*$ as in $X$. Because the values of $u$ and $w_i^\circ$, $w_i^\bullet$ are constants in $X$ they have (the same) constant values in $Y_1^*$ and $Y_2^*$. Hence, by the inductive hypothesis, we have $\mathcal{M}'\true_{Y_1^*}\psi$ and $\mathcal{M}'\true_{Y_2^*}\theta$. Since none of the variables in $V^*$ occurs in $\psi\vee\theta$, by locality $\mathcal{M}'\true_{Y_1}\psi$ and $\mathcal{M}'\true_{Y_2}\theta$. Therefore $\mathcal{M}'\true_X\psi\vee\theta$.


\item The cases $\mu=\Ee x\,\psi$ and $\mu=\Ae x\,\psi$ are straightforward to prove. 
(Note here that, since $x\notin V^*$, the assumptions of Claim~\ref{R1} hold in the resulting team also after the quantification of $x$.) 
\end{itemize}

\medskip

\begin{claim}\label{R2}
Let $\mathcal{M}$ be an $L$-model with at least two elements. Let $\mu\in\subf(\delta)$ and $X$ be a team such that $\dom(X)=\fr(\mu)$. Assume that $A_1,\dots,A_n\subseteq M^k$, $\mathcal{M}' := \mathcal{M}[\tuple A/\tuple P\,]$ and $a,b\in M$ s.t. $a\neq b$. Let
\begin{align*}
	X' &:= X\bigl[\{a\}/u,B_1^\circ/w_1^\circ,\dots,B_n^\circ/w_n^\circ,
	B_1^\bullet/w_1^\bullet,\dots,B_n^\bullet/w_n^\bullet, \\[-0,1cm]
	&\hspace{2,1cm}B_1/\tuple w_1,\dots,B_n/\tuple w_n,B_1^c/\tuple w_1^c,\dots,B_n^c/\tuple w_n^c\bigr],\\
	&\hspace{0,9cm}\text{ where }\,
	\begin{cases}
		B_i^\circ = \{a\},\; B_i^\bullet = \{b\} \text{ and } B_i = B_i^c = M^k \qquad\;\, \text{ if } A_i=\emptyset \\
		B_i^\circ = \{b\},\; B_i^\bullet = \{a\} \text{ and } B_i = B_i^c = M^k \qquad\;\, \text{ if } A_i=M^k \\
		B_i^\circ = \{b\},\; B_i^\bullet = \{b\}, \; B_i = A_i \text{ and } B_i^c = \overline{A_i} \quad\: \text{ else}.
	\end{cases}
\end{align*}
Now the following implication holds: 
\[
	\text{If } \mathcal{M}'\true_{X}\mu, \text{ then } \mathcal{M}\true_{X'}\mu'.
\]
\end{claim}
\noindent
We prove this claim by structural induction on $\mu$. Note that if $X=\emptyset$, then also $X'=\emptyset$ and thus the claim holds by the empty team property. Hence we may assume that $X\neq\emptyset$.
\smallskip
\begin{itemize}
\item If $\mu$ is a literal and $P_i$ does not occur in $\mu$ for any $i\leq n$, then the claim holds by locality since $\mu'=\mu$.


\item Let $\mu=P_j\tuple t$ for some $j\leq n$. 
Suppose $\mathcal{M}'\true_X P_j\tuple t$, i.e. $X(\tuple t\,)\subseteq P_j^\mathcal{M'}=A_j$. Since $X\neq\emptyset$, also $X(\tuple t\,)\neq\emptyset$ and thus $A_j\neq\emptyset$. Hence $X'(w_j^\circ)=\{b\}$, and thus $\mathcal{M}\true_{X'}w_i^\circ\!\neq\!u$ since $X'(u)=\{a\}$ . If $A_j=M^k$, then $X'(w_i^\bullet)=\{a\}$ and thus $\mathcal{M}\true_{X'}w_j^\bullet=u$, whence $\mathcal{M}\true_{X'}(\tuple t\mid\!\tuple w_j^c\vee w_j^\bullet\!=\!u)\wedge w_j^\circ\!\neq\!u$, i.e. $\mathcal{M}\true_{X'}(P_j\tuple t\,)'$.
Suppose then that $A_j\neq M^k$. Now we have $X'(\tuple w_j^c)=\overline{A_j}$, i.e. $\overline{X'(w_j^c)}=A_j$, and thus $X'(\tuple t\,)\!=\!X(\tuple t\,)\subseteq A_j\!=\!\overline{X'(\tuple w_j^c)}$. Hence we have $\mathcal{M}\true_{X'} \tuple t\mid\!\tuple w_j^c$ and thus $\mathcal{M}\true_{X'}(\tuple t\mid\!\tuple w_j^c\vee w_j^\bullet\!=\!u)\wedge w_j^\circ\!\neq\!u$, i.e. $\mathcal{M}\true_{X'}(P_j\tuple t\,)'$.


\item Let $\mu=\neg P_j\tuple t$ for some $j\leq n$. 
Suppose that we have $\mathcal{M}'\true_X\neg P_j\tuple t$, i.e. $X(\tuple t\,)\subseteq \overline{P_j^\mathcal{M'}}=\overline{A_j}$. Since $X\neq\emptyset$, we have $X(\tuple t\,)\neq\emptyset$ and thus $\overline{A_j}\neq\emptyset$, i.e. $A_j\neq M^k$.  Hence $X'(w_j^\bullet)=\{b\}$, and thus $\mathcal{M}\true_{X'}w_i^\bullet\!\neq\!u$ since $X'(u)=\{a\}$. If $A_j=\emptyset$, then $X'(w_i^\circ)=\{a\}$ and thus $\mathcal{M}\true_{X'}w_j^\circ=u$, whence $\mathcal{M}\true_{X'}(\tuple t\mid\!\tuple w_j\vee w_j^\circ\!=\!u)\wedge w_j^\bullet\!\neq\!u$, i.e. $\mathcal{M}\true_{X'}(\neg P_j\tuple t\,)'$.
Suppose then that we have $A_j\neq\emptyset$. Then $X'(\tuple w_j)=A_j$ and thus it holds that $X'(\tuple t\,)\!=\!X(\tuple t\,)\subseteq \overline{A_j}\!=\!\overline{X'(\vec w_j)}$. Hence we have $\mathcal{M}\true_{X'} \tuple t\mid\!\tuple w_j$ and therefore $\mathcal{M}\true_{X'}(\tuple t\mid\!\tuple w_j\vee w_j^\circ\!=\!u)\wedge w_j^\bullet\!\neq\!u$, i.e. $\mathcal{M}\true_{X'}(\neg P_j\tuple t\,)'$.


\item The case $\mu=\psi\wedge\theta$ is straightforward to prove.


\item Let $\mu=\psi\vee\theta$. 
Suppose that $\mathcal{M}'\true_X\psi\vee\theta$, i.e. there exist $Y_1,Y_2\subseteq X$ s.t. $Y_1\cup Y_2=X$, $\mathcal{M}'\true_{Y_1}\psi$ and $\mathcal{M}'\true_{Y_2}\theta$. Let $Y_1',Y_2'$ be the teams obtained by extending the teams $Y_1,Y_2$ as $X'$ is obtained by extending $X$. Then, by the inductive hypothesis, we have $\mathcal{M}\true_{Y_1'}\psi'$ and $\mathcal{M}\true_{Y_2'}\theta'$. Now the following holds:
\[
	\begin{cases}
		Y_1' = Y_1'[X'(\tuple w_1)/\tuple w_1,\dots,X'(\tuple w_n)/\tuple w_n,
				X'(\tuple w_1^c)/\tuple w_1^c,\dots,X'(\tuple w_n^c)/\tuple w_n^c] \\
		Y_2' = Y_2'[X'(\tuple w_1)/\tuple w_1,\dots,X'(\tuple w_n)/\tuple w_n,
				X'(\tuple w_1^c)/\tuple w_1^c,\dots,X'(\tuple w_n^c)/\tuple w_n^c].
	\end{cases}
\]
Note that also $Y_1',Y_2'\subseteq X'$ and $Y_1'\cup Y_2'=X'$. Thus by Proposition \ref{the: Unifying disjunction} $\mathcal{M}\true_{X'}\psi'\veebar^\text{\bf U}\theta'$, i.e. $\mathcal{M}\true_{X'}(\psi\vee\theta)'$.


\item Let $\mu=\Ee x\,\psi$ \;(the case $\mu=\Ae x\,\psi$ is proven similarly).
Suppose that $\mathcal{M}'\true_X\Ee x\,\psi$, i.e. there exists $F:X\rightarrow M$ s.t. $\mathcal{M}'\true_{X[F/x]}\psi$. Let $F':X'\rightarrow M$ such that $s\mapsto F(s\upharpoonright\fr(\mu))$ for each $s\in X'$. Note that $F'$ is well defined since $\dom(X)=\fr(\mu)$ by the assumption. 

Let $(X[F/x])'$ be a team that is obtained by extending the team $X[F/x]$ analogously as $X'$ is obtained by extending $X$. Now by inductive hypothesis we have $\mathcal{M}\true_{(X[F/x])'}\psi'$. By the definition of $F'$ it is easy to see that $(X[F/x])'=X'[F'/x]$ and thus $\mathcal{M}\true_{X'[F'/x]}\psi'$. Hence we have $\mathcal{M}\true_{X'}\Ee x\,\psi'$, i.e. $\mathcal{M}\true_{X'}(\Ee x\,\psi)'$.
\end{itemize}

\medskip

We are now ready to prove the claim of this theorem:
\[
	\mathcal{M}\true\varphi \; \text{ iff } \; \mathcal{M}\true\Phi.
\]
Suppose first that $\mathcal{M}\true\varphi$, i.e. $\mathcal{M}\true\gamma_{=1}\wedge\chi$ or
\begin{align*}
	&\mathcal{M}\true\Ee u\Ee w_1^\circ\dots\Ee w_n^\circ\Ee w_1^\bullet\dots\Ee w_n^\bullet \tag{$\star$} \\[-0,1cm]
	&\hspace{1,5cm}\Ae\tuple z\,\Ee\tuple w_1\dots\Ee\tuple w_n\Ee\tuple w_1^c\dots\Ee\tuple w_n^c
	\bigl(\bigwedge_{i\leq n}(\tuple z=\tuple w_i\vee\tuple z=\tuple w_i^c)\wedge\delta'\bigr).
\end{align*}
If $\mathcal{M}\true\gamma_{=1}\wedge\chi$, the claim holds by Lemma \ref{the: Single element}. Suppose then ($\star$), whence by the (strict) semantics of existential quantifier there are $a,b_1\dots b_n,b_1',\dots,b_n'\!\in\!M$ such that
\[
	\mathcal{M}\true_{X_1}\Ae\tuple z\,\Ee\tuple w_1\dots\Ee\tuple w_n\Ee\tuple w_1^c\dots\Ee\tuple w_n^c
	\bigl(\bigwedge_{i\leq n}(\tuple z=\tuple w_i\vee\tuple z=\tuple w_i^c)\wedge\delta'\bigr),
\]
where $X_1:=\{\emptyset[a/u,b_1/w_1^\circ,\dots,b_n/w_n^\circ,b_1'/w_1^\bullet,\dots,b_n'/w_n^\bullet]\}$. 
Note that since $X_1$ consists only of a single assignment, the values of $u$, $w_i^\circ$ and $w_i^\bullet$ ($i\leq n$) are trivially constants in the team $X_1$. 
Let $X_2:=X_1[M^k/\tuple z\,]$. Now there exist functions $\mathcal{F}_i:X_2[\mathcal{F}_1/\tuple w_1,\dots,\mathcal{F}_{i-1}/\tuple w_{i-1}]\rightarrow M^k$ such that
\[
	\mathcal{M}\true_{X_3}\Ee\tuple w_1^c\dots\Ee\tuple w_n^c
	\bigl(\bigwedge_{i\leq n}(\tuple z=\tuple w_i\vee\tuple z=\tuple w_i^c)\wedge\delta'\bigr),
\]
where $X_3:=X_2[\mathcal{F}_1/\tuple w_1,\dots,\mathcal{F}_n/\tuple w_n]$.

Furthermore there exist functions $\mathcal{F}_i':X_3[\mathcal{F}_1'/\tuple w_1^c,\dots,\mathcal{F}_{i-1}'/\tuple w_{i-1}^c]\rightarrow M^k$ such that $\mathcal{M}\true_{X_4}\bigwedge_{i\leq n}(\tuple z=\tuple w_i\vee\tuple z=\tuple w_i^c)\wedge\delta'$, where $X_4:=X_3[\mathcal{F}_1'/\tuple w_1^c,\dots,\mathcal{F}_n'/\tuple w_n^c]$. Since $X_4(\tuple z)=M^k$ and $\mathcal{M}\true_{X_4}\bigwedge_{i\leq n}(\tuple z=\tuple w_i\vee\tuple z=\tuple w_i^c)$, it is easy to see that $X_4(\tuple w_i)\cup X_4(\tuple w_i^c)=M^k$ for each $i\leq n$. Now all the assumptions of Claim \ref{R1} hold for the team $X_4$. Let $\mathcal{M}' := \mathcal{M}[\tuple A/\tuple P]$, where
\begin{align*}
	A_i &=
	\begin{cases}
		\emptyset \qquad\;\;\text{ if } X_4(w_i^\circ)=X_4(u) \text{ and }\, X_4(w_i^\bullet)\neq X_4(u) \\
		M^k \quad\;\;\text{ if } X_4(w_i^\bullet)=X_4(u) \text{ and }\, X_4(w_i^\circ)\neq X_4(u) \\
		X_4(\tuple w_i) \text{ else}.
	\end{cases}
\end{align*}
Since $\mathcal{M}\true_{X_4}\delta'$, by Claim \ref{R1} we have $\mathcal{M'}\true_{X_4}\delta$. By locality $\mathcal{M'}\true\delta$, and therefore $\mathcal{M}\true\Phi$.

\bigskip

Suppose then that $\mathcal{M}\true\Phi$. If $\abs{M}=1$, then by Lemma \ref{the: Single element} we have $\mathcal{M}\true\gamma_{=1}\wedge\chi$ and thus $\mathcal{M}\true\varphi$. Hence we may assume that $\abs{M}\geq 2$, whence there exist $a,b\in M$ s.t. $a\neq b$. Since $\mathcal{M}\true\Phi$, there exist $A_1,\dots,A_n\subseteq M^k$ s.t. $\mathcal{M}[\tuple A/\tuple P]\true\delta$. Let
\begin{align*}
	X' := \{\emptyset\}\bigl[\{a\}/u,\,&B_1^\circ/w_1^\circ,\dots,B_n^\circ/w_n^\circ,
	B_1^\bullet/w_1^\bullet,\dots,B_n^\bullet/w_n^\bullet, \\
	&B_1/\tuple w_1,\dots,B_n/\tuple w_n,B_1^c/\tuple w_1^c,\dots,B_n^c/\tuple w_n^c\bigr],
\end{align*}
where $B_i^\circ, B_i^\bullet, B_i, B_i^c$ $(i\leq n)$ are defined as in the assumptions of Claim \ref{R2}.
Since $\mathcal{M}[\tuple A/\tuple P]\true\delta$, by Claim \ref{R2} we have $\mathcal{M}\true_{X'}\delta'$. Let
\begin{align*}
	\mathcal{F}:\{\emptyset\}\rightarrow &M^{2n+1}, \;\;\;\emptyset\mapsto a b_1\dots b_n b_1'\dots b_n', \\
	&\text{ where }\quad
	\begin{cases}
		b_i = a\, \text{ if } A_i =\emptyset \\
		b_i = b\; \text{ else }
	\end{cases}
	\text{ and }\quad
	\begin{cases}
		b_i' = a\, \text{ if } A_i =M^k \\
		b_i' = b\; \text{ else. }
	\end{cases}
\end{align*}
Let $X_1:=\{\emptyset\}[\mathcal{F}/u w_1^\circ\dots w_n^\circ w_1^\bullet\dots w_n^\bullet]$ and let $X_2:=X_1[M^k/\tuple z\,]$. We fix some $\tuple b_i\in A_i$ for each $i\leq n$ for which $A_i\neq\emptyset$ and define the functions
\begin{align*}
	\mathcal{F}_i:X_2[\mathcal{F}_1/\tuple w_1,\dots,\mathcal{F}_{i-1}/\tuple w_{i-1}]\rightarrow M^k,
	\quad\begin{cases}
		s\mapsto s(\tuple z) \;\text{ if } s(\tuple z)\in A_i \text{ or } A_i=\emptyset \\
		s\mapsto \tuple b_i \;\quad\text{ else.}
	\end{cases} 
\end{align*}
Let $X_3:=X_2[\mathcal{F}_1/\tuple w_1,\dots,\mathcal{F}_n/\tuple w_n]$. We fix some $\tuple b_i'\in\overline{A_i}$ for each $i\leq n$ for which $A_i\neq M^k$ and define
\begin{align*}
	\mathcal{F}_i':X_3[\mathcal{F}_1'/\tuple w_1^c,\dots,\mathcal{F}_{i-1}'/\tuple w_{i-1}^c]\rightarrow M^k,
	\;\begin{cases}
		s\mapsto s(\tuple z) \;\text{ if } s(\tuple z)\in\overline{A_i} \text{ or } A_i=M^k \\
		s\mapsto \tuple b_i' \;\quad\text{ else.}
	\end{cases} 
\end{align*}
\noindent
Let $X_4:=X_3[\mathcal{F}_1'/\tuple w_1^c,\dots,\mathcal{F}_n'/\tuple w_n^c]$. By the definitions of the functions $\mathcal{F}_i, \mathcal{F}_i'$ it is quite easy to see that $\mathcal{M}\true_{X_4}\bigwedge_{i\leq n}(\tuple z=\tuple w_i\vee\tuple z=\tuple w_i^c)$.
By the definitions of the choice functions for the variables in $V^*$, we observe that $X_4\upharpoonright V^*\subseteq X'$ (note here that the variables in $\tuple z$ are not in $\dom(X')$). 
Hence by locality and downwards closure $\mathcal{M}\true_{X_4}\delta'$. Thus $\mathcal{M}\true_{X_4}\bigwedge_{i\leq n}(\tuple z=\tuple w_i\vee\tuple z=\tuple w_i^c)\wedge\delta'$ and furthermore $\mathcal{M}\true\varphi$.
\end{proof}
\begin{corollary}\label{the: EXC vs. ESO}
On the level of sentences \,$\EXC[k] \equiv \ESO[k]$. 
\end{corollary}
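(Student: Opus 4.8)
The plan is to prove the equivalence as two separate inclusions, on the level of sentences. The nontrivial direction $\ESOset[k]\leq\EXCset[k]$ is precisely the content of Theorem~\ref{the: Translation from ESO to EXC}: for each $\ESOset[k]$-sentence $\Phi$ it produces an $\EXCset[k]$-sentence $\varphi$ with $\mathcal{M}\true\varphi$ iff $\mathcal{M}\true\Phi$ for all $L$-models $\mathcal{M}$, so every class of models axiomatizable by an $\ESOset[k]$-sentence is also axiomatizable by an $\EXCset[k]$-sentence. Nothing further is needed here, since the theorem already delivers a sentence-to-sentence translation.

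For the converse inclusion $\EXCset[k]\leq\ESOset[k]$, I would appeal to the translation established in the author's earlier work \cite{Ronnholm15}, where every $\EXCset[k]$-formula is shown to be expressible by an $\ESOset[k]$-formula with the same truth conditions in all models and teams; in particular the translation of a $k$-ary exclusion atom introduces only $k$-ary quantified relation symbols, so the arity bound $k$ is respected. Specializing this translation to sentences --- for which, by Proposition~\ref{the: Tarski}, truth in $\mathcal{M}$ just means $\mathcal{M}\true_{\{\emptyset\}}\varphi$ --- yields for each $\EXCset[k]$-sentence $\varphi$ an equivalent $\ESOset[k]$-sentence over all $L$-models.

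Combining the two inclusions gives $\EXCset[k]\equiv\ESOset[k]$ on the level of sentences. The substantive mathematical content lies entirely in Theorem~\ref{the: Translation from ESO to EXC}, whose proof rests on the unifying disjunction of Proposition~\ref{the: Unifying disjunction} and on the separate handling of the empty and full relations via the label variables $w_i^\circ,w_i^\bullet$; once that theorem and the earlier translation from \cite{Ronnholm15} are in hand, the only remaining points are the routine observations that passing from formulas to sentences loses nothing and that the backward translation keeps the quantified relations $k$-ary. Thus I expect no real obstacle here beyond the work already carried out in the preceding theorem.
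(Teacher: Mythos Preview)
Your proposal is correct and follows essentially the same approach as the paper: one inclusion is Theorem~\ref{the: Translation from ESO to EXC}, and the other is the translation from $\EXCset[k]$ to $\ESOset[k]$ established in \cite{Ronnholm15}. The additional remarks you make (about arities being preserved and about Proposition~\ref{the: Tarski}) are accurate but not needed beyond what the paper already states in two lines.
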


\begin{proof}
In \cite{Ronnholm15} we have presented a translation from EXC[$k$] to ESO[$k$]. By Theorem~\ref{the: Translation from ESO to EXC}, on the level of sentences, there is also a translation from ESO[$k$] to EXC[$k$].
\end{proof}

In particular, we can capture existential monadic second order logic, EMSO, by using \emph{unary} exclusion atoms. This is particularly interesting since EMSO cannot be captured with any arity fragment of dependence nor independence logic (as a consequence by results in \cite{Durand12,Kontinen13a}). Hence we argue that exclusion logic deserves extra recognition by capturing this important fragment of ESO.


\subsection{Relationship between $\INC[k]$ and $\EXC[k]$}\label{ssec: INC & EXC}

Since by \cite{Ronnholm15} INEX[$k$] captures ESO[$k$], by Corollary~\ref{the: EXC vs. ESO} we can deduce that INEX[$k$] $\equiv$ EXC[$k$] on the level of sentences. Hence, on the level of sentences, $k$-ary inclusion atoms do not increase the expressive power of EXC[$k$].

By Dawar~\cite{Dawar98}, 3-colorability of a graph cannot be expressed in fixed point logic. Since by \cite{Hella13} INC is equivalent with positive greatest fixed point logic, this property is not expressible in INC. However, since it can be expressed in EXC[$1$] (recall Example~\ref{ex: Properties of graphs}), INC[$k$] is \emph{strictly} weaker than EXC[$k$] on the level of sentences for any $k$. 

\begin{corollary}\label{the: INC & EXC}
On the level of sentences \emph{INC}$[k]$ $<$ \emph{EXC}$[k]$ for any $k\geq 1$.
\end{corollary}

This consequence is somewhat surprising since inclusion and exclusion atoms can be seen as duals of each other (\cite{Ronnholm15}). As a matter of fact, exclusion atoms can also be simulated with inclusion atoms in an analogous way as we simulated inclusion atoms with exclusion atoms. To see this, suppose that $X$ is a team and $\tuple x$, $\tuple w$, $\tuple w^c$ are tuples variables s.t. $X(\tuple w^c)=\overline{X(\tuple w)}$. Now we have: $\mathcal{M}\true_X \tuple x\exc\tuple w$ iff $\mathcal{M}\true_X\tuple x\inc\tuple w^c$ (c.f. the observation in the beginning of Section~\ref{ssec: Capturing ESO[k]}). 

By the observation above, it would be natural to assume that $\ESOset[k]$-sentences could be expressed with $\INC[k]$-sentences similarly as we did with $\EXC[k]$-sentences. But this is impossible as we deduced above. The problem is that in $\INC$ there is no way to ``force'' the tuples $\tuple w$ and $\tuple w^c$ to be quantified in such a way that their values would be complements of each other.
However, there is a possibility this could be done in inclusion logic with \emph{strict semantics}, since Galliani, Hannula and Kontinen \cite{Kontinen13a} have shown that this logic is equivalent with ESO. 
We will study this question in the next section.


\section{Lower bound for the expressive power of \\ $k$-ary inclusion logic with strict semantics}\label{sec: Expressive power of INCs}

In this section we will study the expressive power of $k$-ary inclusion logic with strict semantics, denoted by INC$^s[k]$. By using similar tricks as in the previous section, we can formulate a translation from ESO[$k$] to INC$^s[k]$ and thus obtain a lower bound for the expressive power of INC$^s[k]$.

In this section we will exclusively use strict-semantics --  both for evaluating existential quantifiers and for evaluating disjunctions. In order to make make this more explicit, we could have chosen to use a different symbol for the truth -- such as $\true^s$. But have we decided keep our notation more simple.


\subsection{Properties of inclusion logic with strict semantics}

Inclusion logic with the alternative (nonequivalent) strict semantics has been studied in e.g. \cite{Kontinen13a} and \cite{Hannula15}.
As we have noted before, when using strict semantics with inclusion logic, we lose the locality property. Hence the resulting logic is a bit strange by having some counterintuitive properties\footnote{Note that IF-logic is not local either. This is manifested by some exotic properties, such as \emph{signaling}.}. We have to be extra careful when formulating our proofs for INC$^s$ since locality is one of the most commonly used properties used in proofs in the framework of team semantics.

Moreover, not only locality of INC is lost with strict semantics. With inclusion logic we very often use its property of being closed under unions. But also this property is lost with strict semantics, as seen by the following example.\footnote{The corresponding observation has been done independently in \cite{Hella17} for \emph{propositional inclusion logic}.}

\begin{example}
The first case below shows that, with strict semantics for disjunction, the closure under unions is lost for INC. The second case shows the same for strict semantics for existential quantifier. For both cases, let $M=\{0,1,2\}$.
\begin{enumerate}
\item Let $\varphi:=x\inc y \vee y\inc x$ and let $X_1=\{s_0,s_1\}$ and $X_2=\{s_0,s_2\}$, where 
\[
	\begin{cases}
		s_0(x)=0 \\
		s_0(y)=0
	\end{cases}
	\qquad
		\begin{cases}
		s_1(x)=0 \\
		s_1(y)=1
	\end{cases}
	\qquad
	\begin{cases}
		s_2(x)=2 \\
		s_2(y)=0.
	\end{cases}
\]
Now $\mathcal{M}\true_{X_1}\varphi$ since we do a trivial splitting of $X_1$ by leaving the right side empty. Similarly $\mathcal{M}\true_{X_2}\varphi$ since we can leave the left side empty when splitting $X_2$. But $\mathcal{M}\ntrue_{X_1\cup X_2}\varphi$ since there is no way to split $X_1\cup X_2=\{s_0,s_1,s_2\}$ into two \emph{disjoint} subteams such that the other would satisfy $x\inc y$ and the other would satisfy $y\inc x$. 
(Note that with lax semantics $X_1\cup X_2$ can here be split into the subteams $\{s_0,s_1\}$ and $\{s_0,s_2\}$.)


\item Let $\varphi:=\Ee z\,(z\neq x\wedge z\neq y\wedge x\inc z)$ and let $X_1=\{s_0,s_1\}$ and $X_2=\{s_0,s_2\}$, where 
\[
	\begin{cases}
		s_0(x)=0 \\
		s_0(y)=0
	\end{cases}
	\qquad
		\begin{cases}
		s_1(x)=1 \\
		s_1(y)=2
	\end{cases}
	\qquad
	\begin{cases}
		s_2(x)=2 \\
		s_2(y)=1.
	\end{cases}
\]
Now $\mathcal{M}\true_{X_1}\varphi$ since we can map $s_0$ to $1$ and $s_1$ to $0$. Similarly $\mathcal{M}\true_{X_2}\varphi$ since we can map $s_0$ to $2$ and $s_2$ to $0$. However, $\mathcal{M}\ntrue_{X_1\cup X_2}\varphi$ since $\abs{(X_1\cup X_2)(x)}=3$, but both $s_1$ and $s_2$ must be mapped to $0$. 
(Note that with lax semantics $s_0$ can  here be mapped to both $1$ and $2$.)
\end{enumerate}
\end{example}


\subsection{Simulating exclusion in $\INC^s$}

In order to formulate a translation from ESO[$k$] to INC$^s[k]$, we need to be able say in INC$^s$ that the exclusion $\vec x_1\exc\vec x_2$ holds for $k$-tuples $\vec x_1$ and $\vec x_2$. In certain cases this is possible; even without access to the complementary values of $\vec x_1$ and $\vec x_2$ in the team.
For this purpose, we consider a variant of \emph{term-value preserving disjunction} (\cite{Ronnholm15}). The disjunction $\varphi\,_{\scriptsize\vec x_1}\!\!\vee_{\scriptsize\vec x_2}\psi$ states the same as normal disjunction, with the additional assumption that the values of $\vec x_1$ are preserved on the left and the values of $\vec x_2$ on the right when the team is split. That is, $\mathcal{M}\true_X\varphi\,_{\scriptsize\vec x_1}\!\!\vee_{\scriptsize\vec x_2}\psi$ holds if and only if there are $Y,Y'\subseteq X$ such that $Y\cup Y'=X$, $Y\cap Y'=\emptyset$, $\mathcal{M}\true_Y\varphi$, $\mathcal{M}\true_{Y'}\psi$ and additionally $Y(\vec x_1)=X(\vec x_1)$ and $Y'(\vec x_2)=X(\vec x_2)$. 

When $\varphi:=\vec x_1\inc\vec z$ and $\psi:=\vec x_2\inc\vec z$, the truth of $\varphi\,_{\scriptsize\vec x_1}\!\!\vee_{\scriptsize\vec x_2}\psi$ (by strict semantics) will guarantee in certain teams that the exclusion $\vec x_1\exc \vec x_2$ holds. Sufficient condition here is that all the values of all variables in $X$ are \emph{dependent} on the values of $\vec z$. When this holds and $X$ is split into disjoint subteams $Y$ and $Y'$, it is then guaranteed that $Y(\vec z)\cap Y'(\vec z)=\emptyset$. Supposing that $\mathcal{M}\true_X\vec x_1\inc\vec z\,_{\scriptsize\vec x_1}\!\!\vee_{\scriptsize\vec x_2}\vec x_2\inc\vec z$, we then have $X(\vec x_1)=Y(\vec x_1)\subseteq Y(\vec z)$ and $X(\vec x_1)=Y'(\vec x_1)\subseteq Y'(\vec z)$, whence it follows that $X(\vec x_1)\cap X(\vec x_2)=\emptyset$.

In Definition~\ref{def: Exclusion for inclusion} the defined operator $\exclusion_{c_l,c_r,\vec z}(\vec x_1, \vec x_2)$ is derived quite directly from the the definition of the disjunction $\vec x_1\inc\vec z\,_{\scriptsize\vec x_1}\!\!\vee_{\scriptsize\vec x_2}\vec x_2\inc\vec z$ in INEX. The definition is very complex, but we try to explain its main idea here briefly. Suppose that $c_l$, $c_r$ have constant values in a team $X$ and that $X(c_l)\neq X(c_r)$.\footnote{With unary dependence atoms $\dep(x)$ we could state the values for these variables in the team are constants. However, since we cannot express these atoms with inclusion atoms, we have to assume this to be the case. (Alternatively we could use some constant symbols which have different interpretations.)} Now we can quantify a ``label variable'' $y$ for each assignment such that it gets either the value of $c_l$ or $c_r$. This value states whether the assignment in question will be placed on the left ($c_l$) or on the right ($c_r$) when evaluating a disjunction that follows this quantification.
Since these label values are given before the team is split, we can ``check'' beforehand by using inclusion atoms that the values of tuple $\vec x_1$ are preserved on the left and the values of $\vec x_2$ are preserved on the right. This is done with formulas $\theta$ and $\theta'$: the truth of $\theta$ guarantees the preservation of all values except for a constant $\vec c_l$ and the truth of $\theta'$ guarantees the preservation for all values except for a constant $\vec c_r$. When $\vec c_l\neq\vec c_r$, the truth of the conjunction $\theta\wedge\theta'$ guarantees the preservation of all values.

\begin{definition}\label{def: Exclusion for inclusion}
Let $c_l$ and $c_r$ be variables and let $\vec z$, $\vec x_1$, $\vec x_2$ be $k$-tuples of variables. We write
\begin{align*}
	&\exclusion_{c_l,c_r,\vec z}(\vec x_1, \vec x_2) :=
	\Ee y\,\big(((y\!=\!c_l\wedge \vec x_1\inc\vec z)\vee(y\!=\!c_r\wedge \vec x_2\inc\vec z ))
	\wedge\theta\wedge\theta'\bigr), \\[3mm]
	&\quad\theta:=\Ee\vec z_1 \Ee\vec z_2
	\bigl(((y \!=\!c_l\wedge\vec z_1\!=\!\vec x_1\wedge\vec z_2\!=\!\vec c_1) \\
	&\quad\hspace{3cm}\vee(y\!=\!c_r\wedge\vec z_1\!=\!\vec c_1\wedge\vec z_2\!=\!\vec x_2))
	\wedge\vec x_1\inc\vec z_1 \wedge \vec x_2\inc\vec z_2\bigr) \\
	&\quad\theta':=\Ee\vec z_1\Ee\vec z_2
	\bigl(((y\!=\!c_l\wedge\vec z_1\!=\!\vec x_1\wedge\vec z_2\!=\!\vec c_2) \\
	&\quad\hspace{3cm}\vee(y\!=\!c_r\wedge\vec z_1\!=\!\vec c_2\wedge\vec z_2\!=\!\vec x_2))
	\wedge\vec x_1\inc\vec z_1\wedge\vec x_2\inc\vec z_2\bigr),
\end{align*}
where $y$ is a fresh variable, $\vec z_1,\vec z_2$ are $k$-tuples of fresh variables and $\vec c_1$ and $\vec c_2$ are $k$-tuples such that $\vec c_1=c_l\dots c_l$  and $\vec c_2=c_r\dots c_r$.
\end{definition}

The following lemma gives sufficient conditions for the truth of $\mathcal{M}\true_X\vec x_1\exc\vec x_2$. This result is needed when proving Theorem~\ref{the: Translation from ESO to INCs} in the next section.

\begin{lemma}\label{the: Exclusion for inclusion}
Let $\mathcal{M}$ be a model and let $X$ be a team, where $c_l$ and $c_r$ have different constant values $a$ and $b$, respectively. Suppose that the $k$-tuples $\vec z$, $\vec x_1$, $\vec x_2$ are all  in $\dom(X)$ and that the variable $y$ in the definition of $\exclusion_{c_l,c_r,\vec z}(\vec x_1, \vec x_2)$ is not in $\dom(X)$. Moreover, assume that the following conditions hold for $X$.
\begin{enumerate}
\item $\mathcal{M}\true_X\dep(\vec z, v)$ for all $v\in\dom(X)$.
\item $\mathcal{M}\true_X \exclusion_{c_l,c_r,\vec z}(\vec x_1, \vec x_2)$.
\end{enumerate}

\medskip
\noindent
Then it holds that $X(\vec x_1)\cap X(\vec x_2)=\emptyset$.
\end{lemma}

\begin{proof}
We write $\vec a:=a\dots a$ and $\vec b:=b\dots b$.
Since $\mathcal{M}\true_X\exclusion_{c_l,c_r,\vec z}(\vec x_1, \vec x_2)$, there is $F: X\rightarrow M$ s.t. $\mathcal{M}\true_{X'}((y\!=\!c_l\wedge \vec x_1\inc\vec z)\vee(y\!=\!c_r\wedge \vec x_2\inc\vec z))\wedge\theta\wedge\theta'$, where $X'=X[F/y]$. Thus $\mathcal{M}\true_{X'}\theta$, $\mathcal{M}\true_{X'}\theta'$ and there are $Y,Y'\subseteq X'$ s.t. $Y\cup Y'=X'$, $Y\cap Y'=\emptyset$, $\mathcal{M}\true_{Y}y\!=\!c_l\wedge \vec x_1\inc\vec z$ and $\mathcal{M}\true_{Y'}y\!=\!c_r\wedge \vec x_2\inc\vec z$. Since $X'(c_l)=\{a\}$ and $X'(c_r)=\{b\}$, it is easy to see that the following conditions hold for any assignment $s\in X'$:
\[
	s\in Y \text{ iff } s(y)=a \qquad \text{ and } \qquad s\in Y' \text{ iff } s(y)=b.
\]

We first show that $Y(\vec z)\cap Y'(\vec z)=\emptyset$. Suppose, for the sake of contradiction, that $Y(\vec z)\cap Y'(\vec z)\neq\emptyset$, whence there is $s\in Y$ and $s'\in Y'$ s.t. $s(\vec z)=s'(\vec z)$. Now $s(y)=a$ and $s'(y)=b$. Since $\mathcal{M}\true_X\dep(\vec z, v)$ for all $v\in\dom(X)$, by the strict semantics of existential quantifier we must have $\mathcal{M}\true_{X'}\dep(\vec z, y)$. But this is impossible since $s(\vec z)=s'(\vec z)$ and $s(y)\neq s'(y)$.

Since $\mathcal{M}\true_{X'}\theta$, there are $\mathcal{F}_1:X'\rightarrow M^k$ and $\mathcal{F}_2:X'[\mathcal{F}_1/\vec z_1]\rightarrow M^k$ s.t. $\mathcal{M}\true_{Z}((y \!=\!c_l\wedge\vec z_1\!=\!\vec x_1\wedge\vec z_2\!=\!\vec c_1)\vee(y\!=\!c_r\wedge\vec z_1\!=\!\vec c_1\wedge\vec z_2\!=\!\vec x_2))\wedge\vec x_1\inc\vec z_1 \wedge \vec x_2\inc\vec z_2$, where $Z:=X[\mathcal{F}_1/\vec z_1,\mathcal{F}_2/\vec z_2]$. Hence $\mathcal{M}\true_{Z}\vec x_1\inc\vec z_1$, $\mathcal{M}\true_{Z}\vec x_2\inc\vec z_2$ and there are $W_1,W_2\subseteq Z$ s.t. $W_1\cup W_2=Z$, $W_1\cap W_2=\emptyset$, $\mathcal{M}\true_{W_1}y \!=\!c_l\wedge\vec z_1\!=\!\vec x_1\wedge\vec z_2\!=\!\vec c_1$ and $\mathcal{M}\true_{W_2}y\!=\!c_r\wedge\vec z_1\!=\!\vec c_1\wedge\vec z_2\!=\!\vec x_2$.

As above, since $\mathcal{M}\true_{X'}\theta'$, there are $\mathcal{F}_1':X'\rightarrow M^k$ and $\mathcal{F}_2':X'[\mathcal{F}_1'/\vec z_1]\rightarrow M^k$ s.t. $\mathcal{M}\true_{Z'}\vec x_1\inc\vec z_1$, $\mathcal{M}\true_{Z'}\vec x_2\inc\vec z_2$, where $Z':=X[\mathcal{F}_1'/\vec z_1,\mathcal{F}_2'/\vec z_2]$. Moreover there are subteams $W_1',W_2'\subseteq Z'$ such that $W_1'\cup W_2'=Z'$, $W_1'\cap W_2'=\emptyset$, $\mathcal{M}\true_{W_1'}y \!=\!c_l\wedge\vec z_1\!=\!\vec x_1\wedge\vec z_2\!=\!\vec c_2$ and $\mathcal{M}\true_{W_2'}y\!=\!c_r\wedge\vec z_1\!=\!\vec c_2\wedge\vec z_2\!=\!\vec x_2$.

Suppose, for the sake of contradiction, that there is $\vec e\in X(\vec x_1)\cap X(\vec x_2)$. Hence there are $s_1,s_2\in X$ s.t. $s_1(\vec x_1)=\vec e=s_2(\vec x_2)$. Let $r_1:=s_1[F(s_1)/y]$ and $r_2:= s_2[F(s_2)/y]$. Now we must have $r_1(y),r_2(y)\in\{a,b\}$. 

Suppose first that $r_1(y)=a$ and $r_2(y)=b$, whence $r_1\in Y$ and $r_2\in Y'$. Since $\mathcal{M}\true_{Y}\vec x_1\inc \vec z$, we must have $\vec e=r_1(\vec x_1)\in Y(\vec z)$. And since $\mathcal{M}\true_{Y'}\vec x_2\inc \vec z$, we must have $\vec e=r_2(\vec x_2)\in Y'(\vec z)$. But this is impossible, since we deduced above that $Y(\vec z)\cap Y'(\vec z)=\emptyset$. The case when $r_1(y)=b$ and $r_2(y)=a$ leads to a contradiction with a symmetric reasoning.

Suppose then that $r_1(y)=a=r_2(y)$, whence $r_1,r_2\in Y$. As above, we must have $\vec e\in Y(\vec z)$.  Let $r_2'\in Z$ be the assignment that is obtained by extending $r_2$ with $\mathcal{F}_1$ and $\mathcal{F}_2$. Since $\mathcal{M}\true_{Z}\vec x_2\inc\vec z_2$, there is $r_3'\in Z$ s.t. $r_3'(\vec z_2)=r_2'(\vec x_2)$. Suppose first that $r_3'\in W_2$, whence $r_3'(y)=r_3'(c_r)$ and $r_3'(\vec z_2)=r_3'(\vec x_2)$. Let $r_3\in X'$ be the assignment that becomes $r_3'$ when extending it with $\mathcal{F}_1$ and $\mathcal{F}_2$. Since $r_3'(y)=r_3'(c_r)$, also $r_3(y)=r_3(c_r)$ and thus $r_3\in Y'$. Now we have $r_3(\vec x_2)=r_3'(\vec x_2)=r_3'(\vec z_2)=r_2'(\vec x_2)=r_2(\vec x_2)=s_2(\vec x_2)=\vec e$ and thus $\vec e\in Y'(\vec x_2)$. But since $\mathcal{M}\true_{Y'}\vec x_2\inc \vec z$, we also have $\vec e\in Y'(\vec z)$. But this is impossible since $\vec e\in Y(\vec z)$ and we have shown that $Y(\vec z)\cap Y'(\vec z)=\emptyset$. Thus $r_3'$ cannot be in $W_2$.

Suppose then that $r_3'\in W_1$, whence $r_3'(\vec z_2)=r_3'(\vec c_1)$ and thus $r_2'(\vec x_2)=\vec a$. Let $r_2''\in Z'$ be the assignment that is obtained by extending $r_2$ with $\mathcal{F}_1' $ and $\mathcal{F}_2'$. Since $\mathcal{M}\true_{Z'}\vec x_2\inc\vec z_2$, there is $r_3''\in Z$ s.t. $r_3''(\vec z_2)=r_2''(\vec x_2)$. If $r_3''\in W_2'$, then we obtain a contradiction with a similar reasoning as for $r_3'$ above. Hence we must have $r_3''\in W_1'$. But then $r_3''(\vec z_2)=r_3'(\vec c_2)=\vec b$. But this a contradiction since $r_3''(\vec z_2)=r_2''(\vec x_2)=r_2(\vec x_2)=r_2'(\vec x_2)=\vec a\neq \vec b$.

The case when $r_1(y)=b=r_2(y)$ leads to a contradiction with a symmetric reasoning to the previous case. Since all the possible cases lead to a contradiction, we must have $X(\vec x_1)\cap X(\vec x_2)=\emptyset$.
\end{proof}

For the proof of Theorem~\ref{the: Translation from ESO to INCs}, we need also some sufficient conditions for the truth of $\exclusion_{c_l,c_r,\vec z}(\vec x_1, \vec x_2)$ in a team. The assumptions in the following lemma are very specific as this lemma is formulated particularly for the proof of Theorem~\ref{the: Translation from ESO to INCs}.
(The operator $\exclusion_{c_l,c_r,\vec z}(\vec x_1, \vec x_2)$ is not very interesting in its own right -- it is just a tool for our translation from ESO[$k$] to INC$^s$[$k$].)

\begin{lemma}\label{the: Exclusion for inclusion 2}
Let $\mathcal{M}$ be a model and let $X$ be a team where $c_l$ and $c_r$ have different constant values $a$ and $b$, respectively. We assume that the following conditions hold for the team $X$.
\begin{enumerate}
\item $X(\vec x_1)\cap X(\vec x_2)=\emptyset$.
\item For each $s\in X$ either $s(\tuple z\,)=s(\tuple x_1)$ or $s(\tuple z\,)=s(\tuple x_2)$.
\item For each $\vec a_1\in X(\vec x_1)$, there is $s\in X$ for which $s(\tuple x_1)=\vec a_1=s(\vec z\,)$.
\item For each $\vec a_2\in X(\vec x_2)$, there is $s\in X$ for which $s(\tuple x_2)=\vec a_2=s(\vec z\,)$.
\end{enumerate}

\medskip

\noindent
Now it holds that $\mathcal{M}\true_X \exclusion_{c_l,c_r,\vec z}(\vec x_1, \vec x_2)$.
\end{lemma}

\begin{proof}
We first note that by the assumptions 1 and 2, it is impossible that $s(\vec x_1)=s(\vec z)=s(\vec x_2)$ for any $s\in X$. Hence, by the assumption 2, we can define the following function: 
\[
	F:X\rightarrow M, \quad
	\begin{cases}
		s\mapsto a \quad\, \text{ if } s(\vec z\,)=s(\vec x_1) \\
		s\mapsto b \quad\; \text{ if } s(\vec z\,)=s(\vec x_2).
	\end{cases}
\]
Let $X':=X[F/y]$, $Y:=\{s\in X\mid s(y)=a\}$ and $Y:=\{s\in X\mid s(y)=b\}$. Now clearly $Y,Y'\subseteq X'$, $Y\cup Y'=X'$ and $Y\cap Y'=\emptyset$. By the definition of $F$ it is easy to see that $\mathcal{M}\true_{Y}y\!=\!c_l\wedge \vec x_1\inc\vec z$ and $\mathcal{M}\true_{Y'}y\!=\!c_r\wedge \vec x_2\inc\vec z$. Let $\vec a$ be the $k$-tuple $\vec a:= a\dots a$ and let
\begin{align*}
	&\mathcal{F}_1:X'\rightarrow M^k, \quad
	\begin{cases}
		s\mapsto s(\vec x_1) \;\; \text{ if } s(y)=a \\
		s\mapsto \vec a \qquad\; \text{ if } s(y)=b
	\end{cases} \\
	&\mathcal{F}_2:X'[\mathcal{F}_1/\vec z_1]\rightarrow M^k, \quad
	\begin{cases}
		s\mapsto \vec a \qquad\; \text{ if } s(y)=a \\
		s\mapsto s(\vec x_2) \;\; \text{ if } s(y)=b.
	\end{cases}
\end{align*}

We define the teams $Z:=X[\mathcal{F}_1/\vec z_1,\mathcal{F}_2/\vec z_2]$, $W_1:=\{s\in Z\mid s(y)=a\}$ and $W_2:=\{s\in Z\mid s(y)=b\}$. We clearly have $W_1\cup W_2=Z$, $W_1\cap W_2=\emptyset$, $\mathcal{M}\true_{W_1}y \!=\!c_l\wedge\vec z_1\!=\!\vec x_1\wedge\vec z_2\!=\!\vec c_1$ and $\mathcal{M}\true_{W_2}y\!=\!c_r\wedge\vec z_1\!=\!\vec c_1\wedge\vec z_2\!=\!\vec x_2$.
For the sake of showing that $\mathcal{M}\true_{Z}\vec x_1\inc\vec z_1$, let $r\in Z$. Let $s\in X$ be the assignment that becomes $r$, when it is extended with $F$, $\mathcal{F}_1$ and $\mathcal{F}_2$. By the assumption 3, there is $s'\in X$ such that $s'(\vec z)=s(\vec x_1)=s'(\vec x_1)$. Let then $r':=s'[a/y,s'(\vec x_1)/\vec z_1,\vec a/\vec z_2]$. Now $r'\in W_1$ and $r'(\vec z_1)=s'(\vec x_1)=s(\vec x_1)=r(\vec x_1)$.
By using the assumption 4, we can analogously show that $\mathcal{M}\true_{Z}\vec x_2\inc\vec z_2$ and therefore $\mathcal{M}\true_{X'}\theta$. Moreover, we can show by a similar reasoning that $\mathcal{M}\true_{X'}\theta'$, which concludes the proof.
\end{proof}


\subsection{Translation from $\ESO[k]$ to $\INC^s[k]$}

We can formulate a translation from ESO[$k$] to INC$^s$[$k$] by using very similar ideas as in our translation form ESO[$k$] to EXC[$k$]. As noticed before, we can simulate exclusion atoms with inclusion atoms if we have access to the complementary values in the team: Let $X$ be a team and $\tuple x$, $\tuple w_i$, $\tuple w_i^c$ tuples s.t. $X(\tuple w_i^c)=\overline{X(\tuple w_i)}$. Then we have: $\mathcal{M}\true_X \tuple x\exc\tuple w_i$ iff $\mathcal{M}\true_X\tuple x\inc\tuple w_i^c$.

As in the translation in the proof of Theorem~\ref{the: Translation from ESO to EXC}, we use label variables $w_i^\circ$ and $w_i^\bullet$ for simulating the quantification of the empty relation and the full relation $M^k$. Furthermore, we need again Lemma~\ref{the: Single element} for handling the special case of single element models.
One surprising feature of this translation is that we can translate disjunctions directly as $(\psi\vee\theta)' = \psi'\vee\theta'$; this time there is no need for term value preserving disjunction or any other trick as we may allow some of the values of tuples $\tuple w_i$, $\tuple w_i^c$ to be lost when evaluating disjunctions.

The structure of the following proof has many similarities with the proof of  Theorem~\ref{the: Translation from ESO to EXC} and we will omit the parts that can be done here analogously. However, there are also many parts that look similar but which are proven by using different assumptions and thus need to be presented with all the details. 

\begin{theorem}\label{the: Translation from ESO to INCs}
Let $\Phi$ be an $\ESOset[k]$-sentence. Now there exists an $\INCset[k]$-sentence $\varphi$ such that
\[
	\mathcal{M}\true\varphi \; \text{ iff } \; \mathcal{M}\true\Phi.
\]
\end{theorem}

\begin{proof}
Since $\Phi$ is an $\ESOset[k]$-sentence, there exists a $\FOset$-sentence $\delta$ and relation symbols $P_1,\dots,P_n$ so that $\Phi=\Ee P_1\dots\Ee P_n\delta$. We may assume again that $P_1,\dots,P_n$ are all $k$-ary. Let $w_1^\circ,\dots,w_n^\circ,w_1^\bullet,\dots,w_n^\bullet$, $\tuple w_1,\dots,\tuple w_n$ and $\tuple w_1^c,\dots,\tuple w_n^c$ be as in the proof of Theorem~\ref{the: Translation from ESO to EXC}. Let $u$ and $u'$ be fresh variables.
\\ \\
Let $\psi\in\subf(\delta)$. The formula $\psi'$ is defined recursively:
\begin{align*}
	\psi' &= \psi, \text{ if $\psi$ is a literal and $P_i$ does not occur in } \psi \text{ for any } i\leq n \\
	(P_i \vec t)' &= (\vec t\inc\vec w_i\vee w_i^\bullet\!=\!u) \wedge w_i^\circ\!\neq\!u \quad\text{ for all } i\leq n \\
	(\neg P_i \vec t)' &= (\vec t\inc\vec w_i^c\vee w_i^\circ\!=\!u) \wedge w_i^\bullet\!\neq\!u \quad\text{ for all } i\leq n \\
	(\psi\wedge\theta)' &= \psi'\wedge\theta' \\
	(\psi\vee\theta)' &= \psi'\vee\theta' \\
	(\Ee x\,\psi)' &= \Ee x\,\psi' \\
	(\Ae x\,\psi)' & = \Ae x\,\psi'.
\end{align*}
Let $\chi$ be a $\FOset$-sentence determined by the lemma \ref{the: Single element} for the sentence $\Phi$ and let $\vec z$ be a $k$-tuple of fresh variables. 
We can now define $\varphi$ as follows:
\begin{align*}
	\varphi &:= (\gamma_{=1}\wedge\chi)\vee\Ee u\Ee u'\,\Bigl(u\neq u'
	\wedge\Ee w_1^\circ\dots\Ee w_n^\circ\Ee w_1^\bullet\dots\Ee w_n^\bullet\\[-0,1cm]
	&\hspace{3,5cm}\Ae\vec{z}\Ee\vec w_1\dots\Ee\vec w_n
	\Ee\vec w_1^c\dots\Ee\vec w_n^c
	\bigl(\bigwedge_{i=1}^n\exclusion_{u,u',\vec z}(\vec w_i,\vec w_i^c)
	\wedge\delta'\bigr)\Bigr).
\end{align*}
Clearly now $\varphi$ is an $\INCset[k]$-sentence.

\begin{remark}
Since we are using the tuples $\vec w_i$ and $\vec w_i^c$ to simulate a quantified relation and its complement, it would be natural to require that the union of these values forms the full relation $M^k$. This could be achieved by adding the requirement $\bigwedge_{i\leq n}\forall\,\vec v\,(\vec v\inc w_i\vee \vec v\inc\vec w_i^c)$ to the sentence $\varphi$ above. However, we will see that this is not necessary, since it suffices that $\vec w_i$ and $\vec w_i^c$ are quantified in such a way that $X(\tuple w_i)\cap X(\tuple w_i^c)=\emptyset$ in the resulting team.\footnote{Recall that in the proof of Theorem~\ref{the: Translation from ESO to EXC} we had to require that $X(\tuple w_i)\cup X(\tuple w_i^c)=M^k$. This difference forms is an interesting piece of duality between these two translations.}
\end{remark}

Before proving the claim of this theorem, we prove the following two claims. The first claim is quite similar to Claim~\ref{R1}. But here instead of assuming that $X(\tuple w_i)\cup X(\tuple w_i^c)=M^k$ we dually assume that $X(\tuple w_i)\cap X(\tuple w_i^c)=\emptyset$. Also, when defining the sets $A_i$, we cannot simply define $A_i=X(\vec w_i)$ as before. Instead, we must prove that \emph{any set} $B$ for which $X(\tuple w_i)\subseteq B\subseteq \overline{X(\tuple w_i^c)}$ could be chosen as $A_i$ (this requirement makes sense since $X(\tuple w_i)\cap X(\tuple w_i^c)=\emptyset$ for each $i\leq n$). This strengthening of the claim is crucial for proving the case of disjunction.

\medskip

\noindent
We write 
\[
V^* := \vr(u u' w_1^\circ \dots w_n^\circ w_1^\bullet\dots w_n^\bullet\tuple w_1\dots\tuple w_n\tuple w_1^c\dots\tuple w_n^c).
\]

\begin{claim}\label{R3}
Let $\mathcal{M}$ be an $L$-model with at least two elements. Let $\mu\in\subf(\delta)$ and let $X$ a team for which $V^*\!\subseteq\!\dom(X)$ and the following assumptions hold:
\[
	\begin{cases}
		X(\tuple w_i)\cap X(\tuple w_i^c)=\emptyset \text{ for each } i\leq n. \\
		\text{The values of } w_i^\circ,w_i^\bullet \;(i\leq n), u \text{ and } u' \text{ are constants in } X.
	\end{cases}
\]
We consider functions $H_X:\{1,\dots,n\}\rightarrow\mathcal{P}(M^k)$ s.t. for each $i\leq n$ we have
\[
	X(\tuple w_i)\subseteq H_X(i)\subseteq \overline{X(\tuple w_i^c)}.
\]
Let $\mathcal{M}_{H_X} := \mathcal{M}[\tuple A/\tuple P]$, where
\begin{align*}
	A_i &=
	\begin{cases}
		\emptyset \qquad\,\text{ if } X(w_i^\circ)=X(u) \text{ and }\, X(w_i^\bullet)\neq X(u) \\
		M^k \quad\,\text{ if } X(w_i^\bullet)=X(u) \text{ and }\, X(w_i^\circ)\neq X(u) \\
		H_X(i) \text{ else}.
	\end{cases}
\end{align*}

\noindent
Now the following implication holds for every function $H_{X}$: 
\[
	\text{If } \mathcal{M}\true_X\mu', \text{ then } \mathcal{M}_{H_X}\true_X\mu.
\]
\end{claim}

\noindent
We prove this claim by structural induction on $\mu$:
\begin{itemize}
\item If $\mu$ is a literal and $P_i$ does not occur in $\mu$ for any $i\leq n$, then the claim holds trivially since $\mu'=\mu$.

\smallskip

\item Let $\mu=P_j\tuple t$ for some $j\leq n$. 
Suppose that we have $\mathcal{M}\true_X (P_j\tuple t\,)'$, i.e. $\mathcal{M}\true_X (\vec t\inc\vec w_j\vee w_j^\bullet\!=\!u) \wedge w_j^\circ\!\neq\!u$. Because the values of $u$, $w_j^\circ$ are constants in $X$ and $\mathcal{M}\true_X w_j^\circ\!\neq\!u$, we have $X(w_j^\circ)\neq X(u)$. 
If $X(w_j^\bullet)=X(u)$, then $A_j=M^k$ and thus trivially $\mathcal{M}_{H_X}\true_X P_j\tuple t$. 
Suppose then that $X(w_j^\bullet)\neq X(u)$ whence $A_j=H_X(j)$. Because the values of  $u$, $w_j^\bullet$ are constants in $X$ and $\mathcal{M}\true_X\vec t\inc\vec w_j\vee w_j^\bullet\!=\!u$, it must hold that $\mathcal{M}\true_X\vec t\inc\vec w_j$. Now $X(\tuple t\,)\subseteq X(\tuple w_j)\subseteq H_X(j)=A_j$ and therefore $\mathcal{M}_{H_X}\true_X P_j\tuple t$.


\item Let $\mu=\neg P_j\tuple t$ for some $j\leq n$.
Suppose that we have $\mathcal{M}\true_X (\neg P_j\tuple t\,)'$, i.e. $\mathcal{M}\true_X(\vec t\inc\vec w_j^c\vee w_j^\circ\!=\!u) \wedge w_j^\bullet\!\neq\!u$. Because the values of $u$, $w_j^\bullet$ are constants and $\mathcal{M}\true_X w_j^\bullet\!\neq\!u$, we have $X(w_j^\bullet)\neq X(u)$. 
If $X(w_j^\circ)=X(u)$, then $A_j=\emptyset$ and thus trivially $\mathcal{M}_{H_X}\true_X \neg P_j\tuple t$. 
Suppose then that $X(w_i^\circ)\neq X(u)$ whence $A_j=H_X(j)$. Because the values of $u$, $w_j^\circ$ are constants in $X$ and $\mathcal{M}\true_X\vec t\inc\vec w_j^c\vee w_j^\circ\!=\!u$, we have $\mathcal{M}\true_X\vec t\inc\vec w_j^c$. Because $H_X(j)\subseteq\overline{X(w_j^c)}$, it also holds that $X(\tuple w_j^c)\subseteq\overline{H_X(j)}$. Therefore $X(\tuple t\,)\subseteq X(\tuple w_j^c)\subseteq\overline{H_X(j)}=\overline{A_j}$ and thus $\mathcal{M}_{H_X}\true_X \neg P_j\tuple t$.


\item Let $\mu=\psi\vee\theta$. 
Suppose that $\mathcal{M}\true_X(\psi\vee\theta)'$, i.e. $\mathcal{M}\true_X\psi'\vee\theta'$. Hence there are $Y,Y'\subseteq X$ s.t. $Y\cup Y'=X$, $Y\cap Y'=\emptyset$,  $\mathcal{M}\true_{Y}\psi'$ and $\mathcal{M}\true_{Y'}\theta'$. Since $X(\tuple w_i)\cap X(\tuple w_i^c)=\emptyset$ for each $i\leq n$, we must also have $Y(\tuple w_i)\cap Y(\tuple w_i^c)=\emptyset=Y'(\tuple w_i)\cap Y'(\tuple w_i^c)$ for each $i\leq n$. Moreover, since the values of  $w_i^\circ$, $w_i^\bullet$ ($i\leq n$) and $u$ are constants in $X$, they must also have (the same) constant values in $Y$ and $Y'$. 

By the inductive hypothesis, $\mathcal{M}_{H_Y}\true_{Y}\psi$ and $\mathcal{M}_{H_{Y'}}\true_{Y}\theta$, for every function $H_{Y}$ and $H_{Y'}$. We then consider an arbitrary function $H_X$. Since $Y(\vec w_i)\subseteq X(\vec w_i)$ and $\overline{ X(\vec w_i^c)}\subseteq\overline{Y(\vec w_i^c)}$ for each $i\leq n$, we have $\mathcal{M}_{H_X}\true_{Y}\psi$. By a symmetric argumentation $\mathcal{M}_{H_X}\true_{Y'}\theta$. Therefore $\mathcal{M}_{H_X}\true_{X}\psi\vee\theta$.


\item The cases $\mu=\psi\wedge\theta$, $\mu=\Ee x\,\psi$ and $\mu=\Ae x\,\psi$ are straightforward to prove.
\end{itemize}

\medskip

The next claim is very similar to Claim~\ref{R2}. However, since we cannot use locality nor downward closure properties with INC$^s$, we must prove this claim more generally for an extended team $Z$ which: (1) matches with $X$ when restricted $\dom(X)$; and (2) has the same values as $X'$ for certain tuples in $V^*$.

\begin{claim}\label{R4}
Let $\mathcal{M}$ be an $L$-model with at least two elements. Let $\mu\in\subf(\delta)$ and $X$ be a team such that $\dom(X)=\fr(\mu)$. Assume that $A_1,\dots,A_n\subseteq M^k$, $\mathcal{M}' := \mathcal{M}[\tuple A/\tuple P\,]$ and $a,b\in M$ s.t. $a\neq b$. 
We write $\vec a:=a\dots a$. Let now
\begin{align*}
	X' &:= X\bigl[\{a\}/u,\{b\}/u',B_1^\circ/w_1^\circ,\dots,B_n^\circ/w_n^\circ,
	B_1^\bullet/w_1^\bullet,\dots,B_n^\bullet/w_n^\bullet, \\[-0,1cm]
	&\hspace{3,8cm}B_1/\tuple w_1,\dots,B_n/\tuple w_n,B_1^c/\tuple w_1^c,\dots,B_n^c/\tuple w_n^c\bigr],\\
	&\text{ where }\,
	\begin{cases}
		B_i^\circ = \{a\},\; B_i^\bullet = \{b\}, B_i = \{\vec a\} \text{ and }  B_i^c =M^k\setminus\{\vec a\}
		\quad \text{ if } A_i=\emptyset \\
		B_i^\circ = \{b\},\; B_i^\bullet = \{a\}, B_i = \{\vec a\} \text{ and }  B_i^c =M^k\setminus\{\vec a\}
		 \quad \text{ if } A_i=M^k \\
		B_i^\circ = \{b\},\; B_i^\bullet = \{b\}, \; B_i = A_i \text{ and } B_i^c = \overline{A_i} \hspace{17,7mm} \text{ else}.
	\end{cases}
\end{align*}
Now the following implication holds:
\[
	\text{If } \mathcal{M}'\true_{X}\mu, 
	\;\text{ then } \mathcal{M}\true_{Z}\mu',
\]
for any team $Z$ for which $Z\upharpoonright\dom(X)=X$
and $Z(\vec v)=X'(\vec v)$ for all $\vec v\in \vec V^*$, where
\[
	\vec V^* := \{u,u'\}\cup\bigcup_{i\leq n}\{w_i^\circ,w_i^\bullet,\vec w_i, \vec w_i^c\}.
\]
\end{claim}
\noindent
We prove this claim by structural induction on $\mu$. If $X=\emptyset$, then also $Z=\emptyset$ and thus the claim holds trivially. Hence we may assume that $X\neq\emptyset$.
\medskip
\begin{itemize}
\item If $\mu$ is a literal and $P_i$ does not occur in $\mu$ for any $i\leq n$, then the claim holds by locality (since literals are first order, we may use locality here).


\item Let $\mu=P_j\tuple t$ for some $j\leq n$. 
Suppose that we have $\mathcal{M}'\true_X P_j\tuple t$, i.e. $X(\tuple t\,)\subseteq P_j^\mathcal{M'}=A_j$. Since $X\neq\emptyset$, also $X(\tuple t\,)\neq\emptyset$ and thus $A_j\neq\emptyset$. Hence $Z(w_j^\circ)=X'(w_j^\circ)=\{b\}$. Since $Z(u)=X'(u)=\{a\}$, we have $\mathcal{M}\true_{Z}w_j^\circ\!\neq\!u$. 
If $A_j=M^k$, then $Z(w_j^\bullet)=X'(w_j^\bullet)=\{a\}$ and thus $\mathcal{M}\true_{Z}w_j^\bullet=u$. Then $\mathcal{M}\true_{Z}(\vec t\subseteq\vec w_j\vee w_j^\bullet\!=\!u) \wedge w_j^\circ\neq u$, i.e. $\mathcal{M}\true_{Z}(P_j\tuple t\,)'$.
Suppose then that $A_j\neq M^k$. Now $Z(\tuple w_j)=X'(\tuple w_j)=A_j$ and thus $Z(\tuple t\,)\!=\!X(\tuple t\,)\subseteq A_j\!=\!Z(\tuple w_j)$. Hence $\mathcal{M}\true_{Z}\vec t\subseteq\vec w_j $ and therefore we have $\mathcal{M}\true_{Z}(\vec t\subseteq\vec w_j\vee w_j^\bullet\!=\!u) \wedge w_j^\circ\neq u$, i.e. $\mathcal{M}\true_{Z}(P_j\tuple t\,)'$.


\item Let $\mu=\neg P_j\tuple t$ for some $j\leq n$.
Suppose that we have $\mathcal{M}'\true_X\neg P_j\tuple t$, i.e. $X(\tuple t\,)\subseteq \overline{P_j^\mathcal{M'}}=\overline{A_j}$. Since $X\neq\emptyset$, also $X(\tuple t\,)\neq\emptyset$ and thus $\overline{A_j}\neq\emptyset$, i.e. $A_j\neq M^k$.  Hence $Z(w_j^\bullet)=X'(w_j^\bullet)=\{b\}$. Since $Z(u)=X'(u)=\{a\}$, we have $\mathcal{M}\true_{Z}w_i^\bullet\!\neq\!u$. 
If $A_j=\emptyset$, then $Z(w_j)=X'(w_j^\circ)=\{a\}$ and thus $\mathcal{M}\true_{X'}w_j^\circ=u$, whence $\mathcal{M}\true_{X'}(\tuple t\inc\tuple w_j^c\vee w_j^\circ\!=\!u)\wedge w_j^\bullet\!\neq\!u$, i.e. $\mathcal{M}\true_{Z}(\neg P_j\tuple t\,)'$.
Suppose then that $A_j\neq\emptyset$. Now $Z(\tuple w_j^c)=X'(\tuple w_j^c)=\overline{A_j}$ and thus $Z(\tuple t\,)\!=\!X(\tuple t\,)\subseteq \overline{A_j}\!=\!Z(\vec w_j^c)$. 
Hence $\mathcal{M}\true_{X'} \tuple t\inc\!\tuple w_j^c$ and thus $\mathcal{M}\true_{X'}(\tuple t\inc\tuple w_j^c\vee w_j^\circ\!=\!u)\wedge w_j^\bullet\!\neq\!u$, i.e. $\mathcal{M}\true_{X'}(\neg P_j\tuple t\,)'$.

\smallskip

\item The case $\mu=\psi\wedge\theta$ is straightforward to prove.


\item Let $\mu=\psi\vee\theta$. 
Suppose that $\mathcal{M}'\true_X\psi\vee\theta$, i.e. there are $Y_1,Y_2\subseteq X$ s.t. $Y_1\cup Y_2=X$, $Y_1\cap Y_2=\emptyset$, $\mathcal{M}'\true_{Y_1}\psi$ and $\mathcal{M}'\true_{Y_2}\theta$. Let $Y_1',Y_2'$ be the teams obtained by extending the teams $Y_1,Y_2$ as $X'$ is obtained by extending $X$. We define the following teams $W_1,W_2\subseteq Z$:
\[
	\begin{cases}
		W_1 := \{s\in Z\mid s\upharpoonright\dom(X)\in Y_1\} \\
		W_2 := \{s\in Z\mid s\upharpoonright\dom(X)\in Y_2\}.
	\end{cases}
\]
Now $W_1\upharpoonright\dom(Y_1)=W_1\upharpoonright\dom(X)=Y_1$ and $W_1(\vec v)=Y_1'(\vec v)$ for all $\vec v\in \vec V^*$. Thus, by the inductive hypothesis, $\mathcal{M}\true_{W_1}\psi'$. By similar reasoning $\mathcal{M}\true_{W_2}\theta'$. It is also easy to see that $W_1\cup W_2=Z$ and $W_1\cap W_2=\emptyset$, whence $\mathcal{M}\true_{Z}\psi'\vee\theta'$, i.e. $\mathcal{M}\true_{Z}(\psi\vee\theta)'$.


\item Let $\mu=\Ee x\,\psi$ (the case $\mu=\Ae x\,\psi$ is proven by a similar reasoning).
Suppose $\mathcal{M}'\true_X\Ee x\,\psi$, i.e. there is $F:X\rightarrow M$ s.t. $\mathcal{M}'\true_{W}\psi$, where $W:=X[F/x]$. Let
\begin{align*}
	G:Z\rightarrow M, \;\;\; s\mapsto F(s\!\upharpoonright\!\fr(\mu)).
\end{align*}
Note that $G$ is well defined since $\dom(X)=\fr(\mu)$ and $Z\upharpoonright\dom(X)=X$.

Let $W'$ be a team that is obtained by extending the team $W$ analogously as $X'$ is obtained by extending $X$. Now by the definition of $G$ we observe that $Z[G/x]\upharpoonright\dom(W)=W$. Moreover, $(Z[G/x])(\vec v)=W'(\vec v)$ for all $\vec v\in\vec V^*$.
Hence, by the inductive hypothesis, $\mathcal{M}\true_{Z[G/x]}\psi'$. Therefore $\mathcal{M}\true_{Z}\Ee x\,\psi'$, i.e. $\mathcal{M}\true_{Z}(\Ee x\,\psi)'$.
\end{itemize}

\medskip

We are now ready to prove the claim of this theorem:
\[
	\mathcal{M}\true\varphi \; \text{ iff } \; \mathcal{M}\true\Phi.
\]
Suppose first that $\mathcal{M}\true\varphi$. Since the standard disjunction $\vee$ is equivalent with intuitionistic disjunction $\sqcup$  for the singleton team $\{\emptyset\}$, either $\mathcal{M}\true\gamma_{=1}\wedge\chi$ or
\begin{align*}
	\mathcal{M}\true &\Ee u\Ee u'\,\Bigl(u\neq u'
	\wedge\Ee w_1^\circ\dots\Ee w_n^\circ\Ee w_1^\bullet\dots\Ee w_n^\bullet \tag{$\star\star$} \\[-0,1cm]
	&\hspace{2cm}\Ae\vec{z}\Ee\vec w_1\dots\Ee\vec w_n 
	\Ee\vec w_1^c\dots\Ee\vec w_n^c 
	\bigl(\bigwedge_{i=1}^n\exclusion_{u,u',\vec z}(\vec w_i,\vec w_i^c)
	\wedge\delta'\bigr)\Bigr).
\end{align*}

If $\mathcal{M}\true\gamma_{=1}\wedge\chi$, the claim holds by Lemma \ref{the: Single element}. Suppose then ($\star\star$), whence by a similar reasoning as in the proof of Theorem~\ref{the: Translation from ESO to EXC}, there exists a team $X_4$ such that the following conditions hold\footnote{The team $X_4$ here matches the team $X_4$ in corresponding part of the proof of Theorem~\ref{the: Translation from ESO to EXC} with the addition that the variable $u'$ is quantified here as a constant that is different from the value of $u$.}:
\begin{itemize}
\item The values of the variables $u$, $u'$, $w_i^\circ$, $w_i^\bullet$ ($i\leq n$) are constants in $X_4$ and moreover $X_4(u)\neq X_4(u')$.
\item $\mathcal{M}\true_{X_4}\bigwedge_{i=1}^n\exclusion_{u,u',\vec z}(\vec w_i,\vec w_i^c)\wedge\delta'$.
\end{itemize}
We first note that since the variables in $\vec z$ were universally quantified and all the other variables in $\dom(X_4)$ were existentially quantified (by the strict semantics), it holds that $\mathcal{M}\true_{X_4}\dep(\vec z, v)$ for all $v\in\dom(X)$.

Let $j\leq n$. Now the assumptions of Lemma~\ref{the: Exclusion for inclusion} hold when $X=X_4$, $c_l=u$, $c_r=u'$, $\vec x_1=\vec w_j$ and $\vec x_2=\vec w_j^c$. Hence by Lemma~\ref{the: Exclusion for inclusion} $X_4(\tuple w_j)\cap X_4(\tuple w_j^c)=\emptyset$. 
Now all the assumptions of Claim \ref{R3} hold for the team $X_4$. Let $\mathcal{M}' := \mathcal{M}[\tuple A/\tuple P]$, where
\begin{align*}
	A_i &=
	\begin{cases}
		\emptyset \qquad\;\;\text{ if } X_4(w_i^\circ)=X_4(u) \text{ and }\, X_4(w_i^\bullet)\neq X_4(u) \\
		M^k \quad\;\;\text{ if } X_4(w_i^\bullet)=X_4(u) \text{ and }\, X_4(w_i^\circ)\neq X_4(u) \\
		X_4(\tuple w_i) \text{ else}.
	\end{cases}
\end{align*}
Since $\mathcal{M}\true_{X_4}\delta'$, by Claim \ref{R3} we have $\mathcal{M'}\true_{X_4}\delta$ (note that any $A_i$ for which $X(\vec w_i)\subseteq A_i\subseteq\overline{X(\vec w_i^c)}$, could have been chosen in the last case above). By locality\footnote{Note that $\delta$ here is an $\FOset$-sentence and thus locality property may be used.} $\mathcal{M'}\true\delta$, and therefore $\mathcal{M}\true\Phi$.

\bigskip

Suppose then that $\mathcal{M}\true\Phi$. If $\abs{M}=1$, then by Lemma \ref{the: Single element} we have $\mathcal{M}\true\gamma_{=1}\wedge\chi$ and thus $\mathcal{M}\true\varphi$. Hence we may assume $\abs{M}\geq 2$, whence there are $a,b\in M$ s.t. $a\neq b$. Since $\mathcal{M}\true\Phi$, there exist $A_1,\dots,A_n\subseteq M^k$ s.t. $\mathcal{M}[\tuple A/\tuple P]\true\delta$. Let
\begin{align*}
	X' := \{\emptyset\}\bigl[\{a\}/u,\{b\}/u',\,&B_1^\circ/w_1^\circ,\dots,B_n^\circ/w_n^\circ,
	B_1^\bullet/w_1^\bullet,\dots,B_n^\bullet/w_n^\bullet, \\
	&B_1/\tuple w_1,\dots,B_n/\tuple w_n,B_1^c/\tuple w_1^c,\dots,B_n^c/\tuple w_n^c\bigr],
\end{align*}
where $B_i^\circ, B_i^\bullet, B_i, B_i^c$ $(i\leq n)$ are defined as in the assumptions of Claim \ref{R4}. 

Let $\mathcal{F}:\{\emptyset\}\rightarrow M^{2n+2}$ be the function that gives value $a$ for $u$, value $b$ for $u'$ and values for variables $w_i^\bullet$, $w_i^\circ$ $(i\leq n)$ exactly as the corresponding function $\mathcal{F}$ in the proof of Theorem~\ref{the: Translation from ESO to EXC}.
%
%
Let now $X_1:=\{\emptyset\}[\mathcal{F}/uu' w_1^\circ\dots w_n^\circ w_1^\bullet\dots w_n^\bullet]$ and $X_2:=X_1[M^k/\tuple z\,]$.
We write $\vec a:=a\dots a$ and fix some $\tuple b_i\in A_i$ for each $i\leq n$ for which $A_i\neq\emptyset$. We define then the following functions
\begin{align*}
	\mathcal{F}_i:X_2[\mathcal{F}_1/\tuple w_1,\dots,\mathcal{F}_{i-1}/\tuple w_{i-1}]\rightarrow M^k,
	\begin{cases}
		s\mapsto \tuple a \;\,\quad\text{ if } A_i=\emptyset \text{ or } A_i=M^k \\
		s\mapsto s(\tuple z) \:\text{ if } s(\tuple z)\in A_i \text{ and } A_i\neq M^k \\
		s\mapsto \tuple b_i \;\quad\text{ if } s(\vec z)\notin A_i \text{ and } A_i\neq\emptyset.
	\end{cases} 
\end{align*}
Let $X_3:=X_2[\mathcal{F}_1/\tuple w_1,\dots,\mathcal{F}_n/\tuple w_n]$. We write $\vec b:=b\dots b$ and fix some $\tuple b_i'\in\overline{A_i}$ for each $i\leq n$ for which $A_i\neq M^k$. Let
\begin{align*}
	\mathcal{F}_i':X_3[\mathcal{F}_1'/\tuple w_1^c,\dots,\mathcal{F}_{i-1}'/\tuple w_{i-1}^c]\rightarrow M^k,
	\begin{cases}
		s\mapsto s(\tuple z) \!\:\text{ if } A_i\in\{\emptyset,M^k \} \text{ and } s(\vec z)\neq\vec a \\
		s\mapsto \vec b \!\;\;\quad\text{ if } A_i\in\{\emptyset,M^k \} \text{ and } s(\vec z)=\vec a \\
		s\mapsto s(\tuple z) \!\:\text{ if } s(\tuple z)\notin A_i \text{ and } A_i\neq\emptyset \\
		s\mapsto \tuple b_i' \!\;\quad\text{ if } s(\vec z)\in A_i \text{ and } A_i\neq M^k.
	\end{cases} 
\end{align*}
Let $X_4:=X_3[\mathcal{F}_1'/\tuple w_1^c,\dots,\mathcal{F}_n'/\tuple w_n^c]$.

Let $j\le n$. By observing the definitions of $\mathcal{F}_j'$ and $\mathcal{F}_j'$, we can see that all the assumptions of Lemma~\ref{the: Exclusion for inclusion 2} hold when $X=X_4$, $c_l=u$, $c_r=u'$, $\vec x_1=\vec w_j$ and $\vec x_2=\vec w_j^c$. Hence by Lemma~\ref{the: Exclusion for inclusion 2} we have $\mathcal{M}\true_{X_4}\exclusion_{u,u',\vec z}(\vec w_j,\vec w_j^c)$.
Moreover, it is easy to see that $X_4(\vec v)=X'(\vec v)$ for every $\vec v\in\vec V^*$ (recall the assumptions of Claim~\ref{R4}). Since $\mathcal{M}[\tuple A/\tuple P]\true\delta$ and $X_4\upharpoonright\dom(\{\emptyset\})=\{\emptyset\}$ by Claim~\ref{R4} we have $\mathcal{M}\true_{X_4}\delta'$
(note that we cannot use locality property nor downwards closure here as in the proof of Theorem~\ref{the: Translation from ESO to EXC}).
Therefore $\mathcal{M}\true_{X_4}\bigwedge_{i\leq n}\exclusion_{u,u',\vec z}(\vec w_i,\vec w_i^c)\wedge\delta'$ and moreover $\mathcal{M}\true\varphi$.
\end{proof}

\begin{corollary}\label{the: Upper bound for INC^s}
On the level of sentences \emph{ESO}$[k]$ $\leq$ \emph{INC}$^s[k]$ for any $k\geq 1$.
\end{corollary}

Thus, by Theorem~\ref{the: Translation from ESO to EXC}, $k$-ary inclusion logic with strict semantics is \emph{at least as expressive} as $k$-ary exclusion logic on the level of sentences. Recall that by Corollary~\ref{the: INC & EXC}, with the standard (lax) semantics, INC[$k$] is strictly weaker than EXC[$k$] on the level of sentences. Consequently INC$^s$[$k$] is strictly more expressive than INC[$k$] for any $k\geq 1$\footnote{For the general case this is a known results by \cite{Kontinen13a}. But, to our best knowledge, this is a new result for bounded arity fragments of inclusion logic.}.

Since inclusion logic with strict semantics is equivalent with ESO by \cite{Kontinen13a}, it would be natural to predict that INC$^s[k]$ $\equiv$ ESO[$k$] for any $k\geq 1$. However, for now we only have a lower bound for the expressive power of INC$^s[k]$. To our understanding, a translation from INC$^s[k]$ to ESO[$k$] cannot be achieved by modifying the translation from INC$[k]$ to ESO[$k$] (in \cite{Ronnholm15}) in any straightforward way. Therefore we leave this question as an open problem for further research. 


\subsection{On the relationship between $\ESO$ and various logics with team semantics}

We give here some final remarks on the correspondence between ESO and various logics with team semantics. On the level of sentences the whole ESO can be captured with several logics in this framework, such as dependence logic, independence logic, exclusion logic or inclusion logic with strict semantics. But what are the differences between these approaches and which approach can be considered the most natural or practical?

Usually we do not need the whole ESO and some of its simpler fragments suffice. By restricting the arities of atoms in either dependence or independence logic, we can naturally capture all the functional arity fragments of ESO. However, supposing that the functional arity fragments of ESO differ from the relational ones, then the arity fragments of dependence or independence logic cannot capture any of the relational fragments of ESO -- of which ESO[1] and ESO[2] are particularly natural. These and all the other relational fragments can be captured with the fragments of INEX and EXC.

By examining the actual translations that have been presented, we believe that the compositional translation from ESO[$k$] to INEX[$k$], presented in \cite{Ronnholm15}, is currently the most simple and straightforward. By the results of this paper, we know that inclusion atoms are not needed in order to formulate this translation. However, in order to get rid of inclusion atoms, we had to do several ``tricks'' which made the translation more complicated and unnatural. 
%

So far we have only considered this correspondence on the level of sentences. In order to capture ESO  \emph{on the level of formulas}, we need either independence logic or inclusion-exclusion logic. From the know translations, the one between ESO and INEX (\cite{Ronnholm15}) respects the arity fragments in a natural way. It should also be noted that inclusion atoms are crucial for this translation and we cannot simulate them with exclusion atoms (as we could do on the level of sentences).
By the observations given here, we argue the results of this paper rather complement the results of \cite{Ronnholm15} instead of trivializing them.


\section{Conclusion}\label{sec: Conclusion}

In this paper we have analyzed the expressive power of $k$-ary exclusion atoms. We first observed that the expressive power of EXC[$k$] is between $k$-ary and ($k$$+$$1$)-ary dependence logics, and that when $k=1$, these inclusions are proper. By simulating the use of inclusion atoms with exclusion atoms and by using the complementary values, we were able to translate ESO[$k$]-sentences into EXC[$k$]. By combining this with our earlier translation we managed to capture the $k$-ary fragment of ESO by using only $k$-ary exclusion atoms, which resolves the expressive power of EXC[$k$] on the level of sentences. However, on the level of formulas our results are not yet conclusive.

As mentioned in the introduction, by \cite{Durand12}, on the level of sentences $k$-ary dependence logic captures the fragment of ESO where ($k$$-$$1$)-ary functions can be quantified. Thus $1$-ary dependence logic is not more expressive than FO, but $2$-ary dependence logic is strictly stronger than EMSO -- which can be captured with EXC[$1$]. Also, the question whether EXC[$k$] is properly in between $k$- and ($k$$+$$1$)-ary dependence logic for all $k\geq 2$, amounts to showing whether $k$-ary relational fragment of ESO is properly between ($k$$-$$1$)-ary and $k$-ary functional fragments of $\ESO$ for any $k\geq 2$. To our best knowledge this is still an open problem, even though, by the result of Ajtai \cite{Ajtai83}, both relational and functional fragments of $\ESO$ have a strict arity hierarchy (over arbitrary vocabulary).

In order to formulate the translation in our main theorem, we needed use a new operator to called \emph{unifier} which is expressible in exclusion logic.  This is a very simple but interesting operator for the framework of team semantics by its own right, and its properties deserve to be studied further -- either independently or by adding it to some other logics in this framework.

Finally we used the techniques developed in this paper to formulate a translation from ESO[$k$] to $k$-ary inclusion logic with strict semantics (INC$^s[k]$). We left as an open problem whether INC$^s[k]$ captures ESO[$k$] or is even stronger.


\small
\bibliographystyle{abbrv} 
\bibliography{RR-citations}


\end{document}